\documentclass{amsart}
\usepackage[utf8]{inputenc}

\newtheorem{thm}{Theorem}
\newtheorem{lem}[thm]{Lemma}
\newtheorem{cor}[thm]{Corollary}
\newtheorem{rem}[thm]{Remark}

\newtheorem{defi}[thm]{Definition}

\newtheorem{prop}[thm]{Proposition}

\newtheorem*{thm*}{Theorem}
\newtheorem*{defi*}{Definition}
\newcounter{clm}
\newtheorem{claim}[clm]{Claim}

\usepackage{amsmath, amsthm, amssymb,amsfonts}
\usepackage{graphicx}
\usepackage[colorlinks=true]{hyperref}

\title{A refinement of the asymptotic expansion of Weil-Petersson volumes}
\author{Marthe Guillermit}
\date{July 2026}

\begin{document}

\maketitle

\begin{abstract}
Over the past decade, the study of the asymptotic growth of Weil-Petersson volumes of the moduli space of hyperbolic surfaces has yielded numerous results on the length spectrum and on the spectrum of the Laplacian of typical large genus surfaces. We compute the exact asymptotic value of the volume polynomials $V_{g,n}(x_1,\ldots x_n)$ for $\mathbf{x}=(x_1,\ldots x_n)$ the lengths of the boundary components such that $x_i=\mathcal{O}(\sqrt{g})$: 
$$\prod_{j=1}^{n}\frac{x_j}{2}\cdot\frac{V_{g,n}(x_1,\ldots x_n)}{V_{g,n}}\!=\!\frac{1}{2^n}\exp\left({\frac{|\mathbf{
x}|}{2}\!-\!\frac{1}{8\pi^{2}g}\left(\frac{|\mathbf{x}|}{2}\right)^{2}}\right)\!\left(1\!+\!\mathcal{O}_{n}\left(\frac{1}{\min x_j}\right)\right).$$ This result relies on the analysis of the expansion of Witten-Kontsevitch intersection numbers, for which we obtain an analogous explicit result. We also refine the bound over the coefficients of the expansion in terms of $s$ the degree of the expansion. From the expansion of the volumes, we deduce an exact estimate of the average number of non-separating simple geodesics of length of order $\sqrt{g}$. Our result therefore explains the behavior of counting functions at the cutoff $\sqrt{g}$, at which simple geodesics become negligible with respect to non-simple ones. The existence of this cut-off was conjectured by Lipnowski and Wright and proven by Wu and Xue.  
\end{abstract}
\setcounter{tocdepth}{1}
\tableofcontents

\section{Introduction}

Let $\mathcal{M}_{g,n}(\mathbf{x})$ be the moduli space of hyperbolic surfaces of genus $g$ with $n$ boundary components of lengths $\mathbf{x}=(x_1,\ldots x_n)\in \mathbb{R}_{\geq 0}^{n}$. The boundary components are either punctures for $x_i=0$ or geodesic boundaries for $x_i>0$. Let $\mathrm{Vol}_{WP}(\mathcal{M}_{g,n}(\mathbf{x}))=V_{g,n}(\mathbf{x})$ be the Weil-Petersson volume of the moduli space $\mathcal{M}_{g,n}(\mathbf{x})$. The volume of the moduli space of punctured surfaces is denoted $V_{g,n}:=V_{g,n}(0,\ldots 0)$. For fixed $g$, $V_{g,n}(\mathbf{x})$ is a polynomial in $\mathbf{x}$ whose coefficients depend on the intersection numbers of tautological classes $[\tau_{d_1} \ldots \tau_{d_n}]_{g,n}$. In high genus, the existence of an asymptotic expansion for intersection numbers $[\tau_{d_1} \ldots \tau_{d_n}]_{g,n}$ and for volumes of moduli spaces $V_{g,n}(\mathbf{x})$ at any order is one of the numerous results derived from Mirzakhani's topological recursion \cite{MZ}. Our statement precises the form of the expansion. Let us first define partial polynomials. 

\begin{defi}
Let $J\subsetneq \{1,\ldots n\}=\{\mathbf{n}\}$, $p:\mathbb{N}^n\rightarrow \mathbb{R}$ is a $J$-partial polynomial if there exist polynomials $p_1,\ldots p_m$ in $|J|$ variables and $c_1,\ldots c_m:\mathbb{N}^{n-|J|}\rightarrow\mathbb{R}$ functions of compact support such that: 
$$p(\mathbf{d})=\sum_{k=1}^mc_k(\mathbf{d}_I)\cdot p_k(\mathbf{d}_J).$$
The degree of the partial polynomial $p$ is $\mathrm{deg}(p)=\mathrm{max}_{k}(\mathrm{deg}(p_k)).$ Its support is $\mathrm{supp}(p)=\bigcup_{k=1}^m\mathrm{supp}(c_k)$.
\end{defi}

For the expansion of intersection numbers, our theorem reads: 

\begin{thm}\label{thm:internb}
Given the integers $n, s\geq 1,$, there exist:
\begin{itemize}
    \item for any $r\leq s$, $q_n^{{(r)}}$ polynomial in $\mathbf{d}=(d_1,\ldots,d_n)$ of degree $2r$ for $r\leq s$ with 
    
    \begin{equation}
        q_n^{(r)}(\mathbf{d})=\left(\frac{-1}{\pi^{2}}\right)^{r}\sum_{i_1+\ldots+i_n=2r}\frac{(2r-1)!!}{i_1!\ldots i_n!}\cdot d_1^{i_1}\ldots d_n^{i_n}+\mathcal{O}_{n,r}\left(|\mathbf{d}|^{2r-1}\right),
    \end{equation}

    \item for any $2\leq r\leq s$, $q_{n,J}^{{(r)}}$ $J$-partial polynomials with $\mathrm{deg}(q_{n,J}^{(r)})\leq 2r-4$ and $\mathrm{supp}(q_{n,J}^{(r)})\subset \{\mathbf{d}\in\mathbb{N}^{n-|J|}\lvert |\mathbf{d}|<2r-4\}$
\end{itemize}
such that for any $\mathbf{d}=(d_{1},\ldots,d_{n})$
\begin{equation}\label{myresult:ineq1}
\frac{[\tau_{d_1} \ldots \tau_{d_n}]_{g,n}}{V_{g,n}}=1+\frac{e_{n}^{(1)}(\mathbf{d})}{g}+\ldots+\frac{e_{n}^{(s)}(\mathbf{d})}{g^{s}}+\mathcal{O}_{n,s}\left(\frac{|\mathbf{d}|^{2s+2}}{g^{s+1}}\right)
\end{equation}
where $|\mathbf{d}|=\sum_{i=1}^{n}d_i$ and
\begin{align}
e_{n}^{(r)}(\mathbf{d}) &= q_n^{(r)}(\mathbf{d})+\sum_{J\subsetneq\{\mathbf{n}\}} q_{n,J}^{{(r)}}(\mathbf{d})\\
&=\left(\frac{-1}{\pi^{2}}\right)^{r}\sum_{\substack{i_1+\ldots+i_n\\=2r}}\frac{(2r-1)!!}{i_1!\ldots i_n!}\cdot d_1^{i_1}\ldots d_n^{i_n}+\mathcal{O}_{n,r}\left(|\mathbf{d}|^{2r-1}\right).
\end{align}
Moreover, there exist a constant $C_n$ depending on $n$ such that for any $r$ and $\mathbf{d}$, $e_n^{(r)}(\mathbf{d})$ admits the following bound: 
\begin{align*}
    |e_{n}^{(r)}(\mathbf{d})|\leq (C_nr)^{r}\sum_{\substack{i_1+\ldots \\+i_n\leq 2r}}\frac{d_1^{i_1}\ldots d_n^{i_n}}{i_1!\ldots i_n!}. 
\end{align*}
\end{thm}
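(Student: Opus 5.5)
Our approach is induction on $s$, using Mirzakhani's recursion for the normalized intersection numbers $[\tau_{d_1}\ldots\tau_{d_n}]_{g,n}$. In the Mirzakhani--Zograf normalization this recursion expresses $[\tau_{d_1}\ldots\tau_{d_n}]_{g,n}$ through three families of terms. The first is the "removal of a point" term $\sum_j [\tau_{d_1+d_j+L-1}\ldots\widehat{\tau_{d_j}}\ldots]_{g,n-1}$ with coefficients $a_L$, where $a_L=\zeta(2L)(1-2^{1-2L})$. The second is the nonseparating term $[\ldots\tau_{k_1}\tau_{k_2}]_{g-1,n+1}$ with $k_1+k_2=L+d_1-2$. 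The third is the separating terms, which split $g$ into $g_1+g_2$.

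We divide by $V_{g,n}$ and use the known expansions of the ratios $V_{g-1,n+2}/V_{g,n}$, $V_{g,n-1}/V_{g,n}$ (in powers of $1/g$, from \cite{MZ}). The separating terms are $\mathcal{O}(1/g)$ individually. Only those with $g_1$ or $g_2$ bounded contribute at each finite order, and these produce exactly the $J$-partial polynomials: the indices $\mathbf{d}_I$ falling on the small-genus piece are constrained by dimension ($|\mathbf{d}_I|\le 3g_1-3+|I|$), hence of bounded support. We then substitute the inductive expansions $1+\sum_{r<s} e^{(r)}/g^r$ into the right-hand side. Matching coefficients of $g^{-s}$ yields a linear recursion for $e_n^{(s)}$ in terms of the lower $e^{(r)}$ and of finite differences of $e_n^{(s)}$ itself in the $d_j$.

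The key structural step is that the main recursion, in which $d_1$ is shifted down, behaves like a discrete heat or difference equation in $\mathbf{d}$. Because $\sum_L a_L$ telescopes against the $\pi^2$ normalization, the leading-order contribution at order $g^{-s}$ is a first difference of the top-degree part. Concretely, we expect an identity of the form $\Delta_{d_1}$ acting on the degree-$2s$ part of $e^{(s)}$, equal to $-\frac{1}{\pi^2}\cdot$(degree $2s-1$ terms built from $e^{(s-1)}$) up to lower degree. This determines $q_n^{(s)}$ up to a polynomial of lower degree. We then verify that the proposed leading form $\left(\frac{-1}{\pi^{2}}\right)^{s}(2s-1)!!\sum \prod d_j^{i_j}/i_j!$ solves it. This form is the coefficient of $t^{2s}$ in $\exp(-t^2/(2\pi^2)\cdot \ldots)$ composed with $e^{t\sum d_j}$, i.e. a Gaussian-type generating function, and symmetry in the $d_j$ fixes the remaining freedom. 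I expect this identification of the leading coefficient, together with controlling the degree drop (showing $\deg q_{n,J}^{(r)}\le 2r-4$, since each separating splitting costs at least two powers of $g$), to be the main obstacle. What I would try first is to pass to the generating function $\sum_{\mathbf{d}} e_n^{(r)}(\mathbf{d})\prod t_j^{d_j}/\ldots$, where the recursion becomes a differential equation whose top-order part is solvable in closed form.

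The remainder $\mathcal{O}(|\mathbf{d}|^{2s+2}/g^{s+1})$ follows by carrying the inductive error term through the recursion. We use the uniform bounds $[\tau_{\mathbf{d}}]_{g,n}\le V_{g,n}$ and the $g^{-1}$ decay of separating sums, both from \cite{MZ}. Finally, for the bound $|e_n^{(r)}|\le (C_n r)^r\sum \mathbf{d}^{\mathbf{i}}/\mathbf{i}!$, we track constants in the induction. Each step multiplies by at most $C_n r$: the factor $r$ comes from the finite difference and the sums over $L$ of $a_L$-type coefficients of the expansions of $V_{g-1,n+2}/V_{g,n}$, whose $r$-th coefficients grow like $r!\,C^r$. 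The factorial weights $1/\mathbf{i}!$ are preserved because summation in $d_1$ of a term $d_1^{i}/i!$ gives $d_1^{i+1}/(i+1)!$ up to lower order.
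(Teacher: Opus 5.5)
\textbf{Gap 1: the recursion cannot be expanded as it stands.} The central step of your plan does not work as written. You propose to substitute the inductive expansions $1+\sum_r e^{(r)}/g^r$ directly into Mirzakhani's recursion and match powers of $g$. But the $\mathcal{A}$- and $\mathcal{B}$-terms are sums $\sum_{L=0}^{d_0}a_L[\cdots]$ with $d_0=3g-3+n-|\mathbf d|$ and $a_L\to 1$. They therefore involve intersection numbers with indices up to order $g$. There the inductive error $\mathcal{O}(|\mathbf d|^{2s+2}/g^{s+1})$ is not small, and nothing damps these indices. Formally, summing the degree-$2r$ coefficients up to $L=d_0$ even produces positive powers of $g$, so the orders do not separate.

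The missing idea, which the paper takes from Mirzakhani--Zograf, is to apply the discrete derivative $\delta_1$ to the recursion \emph{before} expanding. Since $d_0$ drops by one when $d_1$ increases, the weights telescope to $a_{i-1}-a_i$ (with $a_{-1}=0$). These decay like $4^{-i}$ and have all moments finite, so the inductive expansions and their errors can be inserted. The prefactors $V_{g,n-1}/V_{g,n}$ and $V_{g-1,n+1}/V_{g,n}$ are both $\sim 1/(8\pi^2 g)$. Hence the order-$(s+1)$ coefficient of $\delta_1[\tau_{d_1}\ldots\tau_{d_n}]_{g,n}/V_{g,n}$ is given explicitly by already known data ($e^{(r)}_{n\pm1}$ with $r\le s$, lower orders of the separating terms, and the ratio coefficients). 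There is no implicit equation for $e_n^{(s+1)}$ to solve: one sums $\delta_1$ from $\mathbf d=0$, where the ratio is exactly $1$, coordinate by coordinate and uses symmetry. The same differentiated recursion is what makes the error term and the $(C_n r)^r$ bound work, via $\sum_i (a_{i+1}-a_i)i^r\le 2\,r!$ absorbed by the $1/\mathbf i!$ weights. Your phrase ``$\sum_L a_L$ telescopes against the $\pi^2$ normalization'' does not describe this mechanism.

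\textbf{Gap 2: the leading constant is asserted, not derived.} You only ``expect'' the leading identity, yet its constant is the whole content of the explicit formula and must be computed. Write $E^{(s)}$ for the top-degree part. In the differentiated recursion the $\mathcal{A}$-term contributes $8(2d_j+1)\sum_{i\ge0}(a_{i-1}-a_i)$ per $j\ge2$. The $\mathcal{B}$-term contributes $16\sum_{i\ge0}(a_{i-1}-a_i)$ times the convolution, which supplies a factor $d_1$. Moreover $\sum_{i\ge0}(a_{i-1}-a_i)=a_{-1}-\lim_i a_i=-1$. With the factor $1/(8\pi^2g)$ this gives $\Delta_{d_1}E^{(s+1)}\approx -\frac{2}{\pi^2}|\mathbf d|\,E^{(s)}$. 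Its symmetric solution is $E^{(s)}=\frac{1}{s!}\bigl(-|\mathbf d|^2/\pi^2\bigr)^s$. That is $2^s$ times the displayed form $\left(\frac{-1}{\pi^2}\right)^s(2s-1)!!\sum_{|\mathbf i|=2s}\frac{\mathbf d^{\mathbf i}}{\mathbf i!}=\frac{1}{s!}\bigl(-|\mathbf d|^2/(2\pi^2)\bigr)^s$.

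The paper's claims reach the displayed constant by evaluating this weight sum with the half-sum corollary, i.e.\ as $-\tfrac12$. That omits the $i=0$ weight $a_{-1}-a_0=-\tfrac12$.

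For $s=1$ there is an independent check. Insert the expansion into the Do--Norbury identity $V_{g,2}(x,2\pi i)=\int_0^x tV_{g,1}(t)\,dt$, using $V_{g,1}/V_{g,2}\sim 1/(8\pi^2 g)$. If $e_2^{(1)}$ is, as claimed, a degree-$2$ polynomial, matching the $\cosh(x/2)$ terms at order $1/g$ forces its $d_1d_2$-coefficient to be $-2/\pi^2$, not the displayed $-1/\pi^2$. So your verification step cannot succeed for the constant as stated.

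\textbf{A related problem.} The truncations in the $\mathcal{A}$- and $\mathcal{B}$-terms at small indices (index $-1$, empty convolutions) already produce first-order corrections supported where some $d_i=0$. For $n=1$ one finds $e_1^{(1)}(d)=-\pi^{-2}(d^2-\tfrac32 d+1)$ for $d\ge1$ but $e_1^{(1)}(0)=0$, consistent with $V_{g,1}(2\pi i)=0$. So non-polynomial terms do not come only from separating splittings, contrary to both your sketch and the paper's argument.
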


\begin{rem}
For the dominant terms, the explicit computation immediately yields $$|[i_1,\ldots i_n]e_n^{(r)}|<\frac{(Cr)^{r}}{i_1!\ldots i_n!}$$
where $C=2/\pi^{2}$. According to the work of Hide, Macera and Thomas \cite{HMT}, the coefficients can be bounded by $(n+r)^{r}(Cr)^{cr}$ for a $c>600$. Our result improves the exponent to $(C_nr)^{r}$. One could make explicit the dependence of the constant $C$ in $n$, and we conjecture it should yield the bound $(n^{2}+1)^{r}(Cr)^{r}$. As $n$ is mostly considered as a fixed parameter in the Weil-Petersson model, we prove a weaker statement without making explicit the dependency in $n$.  
\end{rem}

From the expansion of intersection numbers one can deduce the expansion of the volumes. We give a refinement of the expansion computed by Anantharaman and Monk \cite{AM22}, see Subsection~\ref{subsec:expvol}. The degree of the polynomial in the error term is optimal.

\begin{thm}\label{thm:expvol}

 Let $n \geq 1$ be an integer.  There exists a unique family
  $(f_n^{(r)})_{r \geq 0}$ of functions and a constant $C_n$ such that
  for any integer $s \geq 0$, any genus $g \geq 1$ and any length vector $\mathbf{x} \in \mathbb{R}_{\geq 0}^n$,
  \begin{equation}
    \label{e:fn_approx}
    \left|\prod_{i=O}^n\frac{x_i}{2}\cdot\frac{V_{g,n}(\mathbf{x})}{V_{g,n}}- \sum_{r=0}^s \frac{f_{n}^{(r)}(\mathbf{x})}{g^r}\right| \leq\frac{{(C_n(s+1))^{s+1}(1+|\mathbf{x}|)}^{2s+2}}{g^{s+1}} \exp \left(\frac{|\mathbf{x}|}{2}\right).
  \end{equation}
  Furthermore, for any $r \geq 0$, the function $f_n^{(r)}$ can be expressed as
  \begin{equation}
    f_n^{(r)}(\mathbf{x})
    = \sum_{\substack{J=J_+ \sqcup J_- \subseteq\\ \{1, \ldots, n\}}}
    Q_n^{(r,J_\pm)}(\mathbf{x}) \prod_{i \in J_+} \cosh\frac{x_i}{2}
    \prod_{i \in J_-} \sinh \frac{x_i}{2},
  \end{equation}
  where $Q_n^{(r,J_\pm)}$ are uniquely defined even $n$-variable polynomial functions of degree $2r$ if $J=\{1, \ldots, n\}$ and of degree at most $2r-4$ in $\mathbf{d}_J$ if $J\subsetneq\{1, \ldots, n\}$ such that:
  \begin{align*}
      |Q_n^{(r,J_\pm)}(x_1,\ldots x_n)|\leq(C_nr)^{r}(1+|\mathbf{x}|)^{2r}. 
  \end{align*}
\end{thm}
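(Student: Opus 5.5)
We derive Theorem~\ref{thm:expvol} from Theorem~\ref{thm:internb} using the standard expression of the volume polynomial in terms of intersection numbers,
\begin{equation*}
V_{g,n}(\mathbf{x})=\sum_{|\mathbf{d}|\leq 3g-3+n}\frac{[\tau_{d_1}\ldots\tau_{d_n}]_{g,n}}{\prod_i 2^{2d_i}(2d_i+1)!}\,x_1^{2d_1}\cdots x_n^{2d_n},
\end{equation*}
as in the approach of Anantharaman and Monk \cite{AM22}. Dividing by $V_{g,n}$ and multiplying by $\prod_i x_i/2$, each monomial becomes $\prod_i (x_i/2)^{2d_i+1}/(2d_i+1)!$. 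We then substitute the expansion \eqref{myresult:ineq1} truncated at order $s$. The constant term $1$ resums to $\prod_i\sinh(x_i/2)$ over all $\mathbf{d}$; the truncation to $|\mathbf{d}|\leq 3g-3+n$ produces a tail. We bound this tail by $\exp(|\mathbf{x}|/2)$ times a quantity that is super-polynomially small in $g$ whenever $|\mathbf{x}|\ll g$. When $|\mathbf{x}|\gtrsim g$, the right-hand side of \eqref{e:fn_approx} already dominates trivially, since $(C_n(s+1))^{s+1}(1+|\mathbf{x}|)^{2s+2}/g^{s+1}\geq 1$ and the left-hand side is $O(e^{|\mathbf{x}|/2})$ by the crude bound $[\tau_{\mathbf{d}}]\leq V_{g,n}$.

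Next we identify each $f_n^{(r)}$. For the polynomial part $q_n^{(r)}$, a monomial $d_1^{i_1}\cdots d_n^{i_n}$ summed against $\prod (x_i/2)^{2d_i+1}/(2d_i+1)!$ factorizes over $i$. Each factor $\sum_d d^{k}\, y^{2d+1}/(2d+1)!$ with $y=x_i/2$ equals $P_k(y)\cosh y+R_k(y)\sinh y$, with polynomials of degree $k$ of explicit parity, obtained by applying the operator $\tfrac12(y\partial_y-1)$ repeatedly to $\sinh y$. This yields the term with $J=\{1,\ldots,n\}$ and $Q_n^{(r,J_\pm)}$ of degree $2r$. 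For a $J$-partial polynomial $q_{n,J}^{(r)}$, the factors indexed by $I=J^c$ are finite sums over the compact support $|\mathbf{d}_I|<2r-4$ and give polynomials in $\mathbf{x}_I$ of degree $<4r$ with no hyperbolic factor. The factors in $J$ give $\cosh/\sinh$ times polynomials of degree $\leq 2r-4$. Evenness follows from the parity bookkeeping. Uniqueness follows from the linear independence of $\{\text{polynomial}\times\prod\cosh^{\pm}\}$ as functions, together with uniqueness of asymptotic expansions in $1/g$.

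For the coefficient bound on $Q_n^{(r,J_\pm)}$, we use the bound $|e_n^{(r)}(\mathbf{d})|\leq (C_nr)^r\sum_{|\mathbf{i}|\leq 2r}\mathbf{d}^{\mathbf{i}}/\mathbf{i}!$. Applied to the explicit dominant term, the operator calculus above gives polynomial coefficients of size $(C r)^r$ times $(1+|\mathbf{x}|)^{2r}$, since $\sum_{k\le 2r}|P_k|/k!$ is controlled by $(1+y)^{2r}$ up to $C^r$. The partial polynomials need a bound on their own coefficients. We extract these from the bound on $e_n^{(r)}$ by evaluating on a finite grid of $\mathbf{d}$ and inverting a Vandermonde system, and this costs only a factor $C_n^r$.

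For the error term, we sum the remainder $\mathcal{O}_{n,s}(|\mathbf{d}|^{2s+2}/g^{s+1})$ against the weights. This requires the remainder constant to be $(C_n(s+1))^{s+1}$, which we obtain by writing the remainder as $e_n^{(s+1)}/g^{s+1}$ plus the higher remainder, and iterating. Then
\begin{equation*}
\sum_{\mathbf{d}}|\mathbf{d}|^{2s+2}\prod_i\frac{y_i^{2d_i+1}}{(2d_i+1)!}\leq C^{s}(1+|\mathbf{x}|)^{2s+2}e^{|\mathbf{x}|/2}.
\end{equation*}

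The main obstacle is controlling the remainder uniformly with explicit $s$-dependence for $|\mathbf{d}|$ comparable to $g$, where the expansion of Theorem~\ref{thm:internb} stops being informative. We would split at $|\mathbf{d}|\leq \epsilon g/s$. Below this threshold we use the geometric-type series in $(C_n r|\mathbf{d}|^2/g)^r$. Above it we use the trivial bound $[\tau_{\mathbf{d}}]/V_{g,n}\leq 1$, and the large-$|\mathbf{d}|$ weights are then absorbed by $|\mathbf{d}|^{2s+2}/g^{s+1}\gtrsim (C s)^{-(s+1)}$ on that region.
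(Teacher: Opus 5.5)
Your skeleton is the paper's. You weight $[\tau_{d_1}\ldots\tau_{d_n}]_{g,n}/V_{g,n}$ by $\prod_i (x_i/2)^{2d_i+1}/(2d_i+1)!$, insert Theorem~\ref{thm:internb}, and resum monomials in $\mathbf d$ into polynomials times $\cosh/\sinh$; your operator $\tfrac12(y\partial_y-1)$ is equivalent to the basis $\mathrm{p}_k$ of Lemma~\ref{lemm:sum}. Two of your additions are fine. Your handling of the truncation $|\mathbf d|\le 3g-3+n$ is something the paper omits. Recovering coefficient bounds from value bounds by interpolation can replace the paper's separate coefficient bounds from Claim~\ref{claim:controlcoef}, provided the grid sits where all partial polynomials vanish (e.g.\ all $d_i\ge 2r$, where $e_n^{(r)}=q_n^{(r)}$) and you also apply it to the lower-order part of $q_n^{(r)}$.

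The gap is the error term with the explicit constant $(C_n(s+1))^{s+1}$, and it shows up at two points. First, your displayed inequality $\sum_{\mathbf d}|\mathbf d|^{2s+2}\prod_i y_i^{2d_i+1}/(2d_i+1)!\le C^s(1+|\mathbf x|)^{2s+2}e^{|\mathbf x|/2}$ is false. For $n=1$, $y=1$, the single term $d=\lceil\sqrt s\,\rceil$ is already $s^{s(1-o(1))}$; the sum grows like a Bell number, roughly $(cs/\log s)^{2s}$. So a remainder of the form $(C_n(s+1))^{s+1}|\mathbf d|^{2s+2}g^{-s-1}$ produces a constant far larger than $(C_n(s+1))^{s+1}$. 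The remainder has to be carried in the normalisation of Theorem~\ref{thm:internb}, i.e.\ bounded by $(C_n(s+1))^{s+1}g^{-s-1}\sum_{|\mathbf i|\le 2s+2}\mathbf d^{\mathbf i}/\mathbf i!$. Then $\sum_d \frac{d^k}{k!}\frac{y^{2d+1}}{(2d+1)!}\le (1+y)^k e^{y}$ (Touchard polynomials, Bell numbers at most $k!$) makes the summation cost only $C_n^{s}$.

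Second, and more seriously, nothing in your argument produces such an explicit remainder. Theorem~\ref{thm:internb} gives it only as $\mathcal O_{n,s}$, and bounds on the coefficients $e_n^{(r)}$ say nothing about it. Writing $R_s=e_n^{(s+1)}g^{-s-1}+R_{s+1}$ and iterating never closes, since every step leaves a remainder with an unspecified constant. The ``geometric-type series'' below your threshold presupposes that $\sum_r e_n^{(r)}(\mathbf d)g^{-r}$ converges to $[\tau_{d_1}\ldots\tau_{d_n}]_{g,n}/V_{g,n}$. That is nowhere established, and the available majorant contains $\sum_r (C_nr/g)^r$, which diverges for every $g$.

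What is needed is to prove the explicit remainder inside the induction for Theorem~\ref{thm:internb}, in the style of \cite{HMT}. You would carry $(C_n(s+1))^{s+1}g^{-s-1}\sum_{|\mathbf i|\le 2s+2}\mathbf d^{\mathbf i}/\mathbf i!$ through the $\mathcal A$, $\mathcal B$, $\mathcal C$ terms and the discrete integration. This also requires explicit-in-$s$ remainders for the ratios of Theorem~\ref{thm:expratios} and for the cut-off of Lemma~\ref{lem:sumratioMZ}. The paper does not do this either: its proof writes the error as $\mathcal O_{n,s}\bigl(|\mathbf x|^{2s+2}g^{-s-1}e^{|\mathbf x|/2}\bigr)$ and defers constants to \cite{HMT}. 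So you have found a real weak point, but your mechanism does not close it.

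A smaller slip: when $(1+|\mathbf x|)^2\gtrsim g/(C_n(s+1))$, the left side of \eqref{e:fn_approx} is not $O(e^{|\mathbf x|/2})$, since it contains $\sum_{r\le s}f_n^{(r)}g^{-r}$. It is still dominated by the right side, but only once the bound on $Q_n^{(r,J_\pm)}$ is in hand.
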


The main polynomial $q_{n}^{(r)}$ of Theorem~\ref{thm:internb} determines $Q_n^{(r,J_{\pm})}$ for $J_{\pm}=\{1, \ldots, n\}$ and $J$-partial polynomials $q_{n,J}^{(r)}$ determine $Q_n^{(r,J_{\pm})}$ for $J_\pm\subsetneq\{1, \ldots, n\}$. 

One might hope to derive the asymptotic expansion of the volumes directly from the topological recursion, rather than through the asymptotic expansion of intersection numbers. Unfortunately, this does not seem feasible because the recursion involves poorly behaved integrals. It therefore appears necessary to first establish the asymptotic expansion of the intersection numbers.

The exact computation of the leading term of the coefficient in the expansion of intersection numbers in Theorem~\ref{thm:internb} allows us to obtain the exact value of the leading terms of the polynomials $Q_n^{(r,J_\pm)}$, and consequently to give the exact asymptotic value of the volumes up to $x_i=\mathcal{O}(\sqrt{g})$. 

\begin{thm}
\label{thm:asymsqrt}
For $\mathbf{x}(g)$ such that for every $1\leq j\leq n$, $x_j(g)\rightarrow\infty$ and $x_j=\mathcal{O}(\sqrt{g})$:
\begin{equation}
    \prod_{j=1}^{n}\frac{x_j}{2}\cdot\frac{V_{g,n}(\mathbf{x})}{V_{g,n}}=\frac{1}{2^n}\exp\left(\frac{|\mathbf{x}|}{2}-\frac{1}{8\pi^{2}g}\left(\frac{|\mathbf{x}|}{2}\right)^{2}\right)\left(1+\mathcal{O}_{n}\left(\frac{1}{\min x_i}\right)\right).
\end{equation}

For $\mathbf{x}(g)$ such that for $j\in J\subsetneq \{\mathbf{n}\}$, $x_j(g)\rightarrow\infty$ and $x_j=\mathcal{O}(\sqrt{g})$, and else $x_j(g)\rightarrow \tilde{x}_j<\infty$:
\begin{align*}
    \prod_{j\in\{\mathbf{n}\}}\frac{x_j}{2}\cdot\frac{V_{g,n}(\mathbf{x})}{V_{g,n}}&=\frac{1}{2^{|J|}}\cdot\prod_{j\notin J}\sinh\frac{\tilde{x}_j}{2}\cdot\exp\left(\frac{|\mathbf{x}_J|}{2}-\frac{|\mathbf{x}_J|^{2}}{32\pi^{2}g}\right)\\
    &\hspace{4.5cm}\cdot\left(1+\mathcal{O}_{n,|J|}\left(\frac{1}{\min_{j\in J}x_j}\right)\right).\\
\end{align*}
\end{thm}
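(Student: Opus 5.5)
We work directly from the expression of the volume polynomial through intersection numbers, rather than through the truncated expansion of Theorem~\ref{thm:expvol}. The reason is that for $x_j\asymp\sqrt g$ each term $f_n^{(r)}(\mathbf{x})/g^r$ has size of order $1$, so the series has to be summed, not truncated.

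\textbf{Step 1: write the volume as a series.} Recall
\[
V_{g,n}(\mathbf{x})=\sum_{|\mathbf{d}|\le 3g-3+n}[\tau_{d_1}\ldots\tau_{d_n}]_{g,n}\prod_{j}\frac{x_j^{2d_j}}{2^{2d_j}(2d_j+1)!}.
\]
Hence
\[
\prod_j\frac{x_j}{2}\cdot\frac{V_{g,n}(\mathbf{x})}{V_{g,n}}=\sum_{\mathbf{d}}\frac{[\tau_{\mathbf{d}}]_{g,n}}{V_{g,n}}\prod_j\frac{(x_j/2)^{2d_j+1}}{(2d_j+1)!}.
\]

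\textbf{Step 2: localize the sum.} The weight $\prod_j (x_j/2)^{2d_j+1}/(2d_j+1)!$ is, up to normalization, a product of Poisson-type weights concentrated at $2d_j+1\approx x_j/2$, with fluctuations of order $\sqrt{x_j}$. So the relevant range is $d_j\approx x_j/4=\mathcal{O}(\sqrt g)$. Outside the window $|2d_j+1-x_j/2|\le x_j^{1/2+\epsilon}$ the contribution is a negligible fraction of the total. For this tail we only need the crude bound $[\tau_\mathbf{d}]_{g,n}\le V_{g,n}$, which is classical. The terms with $d_j$ beyond $3g$ are absent.

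\textbf{Step 3: resum the expansion in $1/g$ inside the window.} Inside the window we insert Theorem~\ref{thm:internb}, keeping only its leading parts:
\[
e_n^{(r)}(\mathbf{d})=\Big(\frac{-1}{\pi^2}\Big)^r\frac{(2r-1)!!}{(2r)!}|\mathbf{d}|^{2r}+\mathcal{O}(|\mathbf{d}|^{2r-1})=\frac{1}{r!}\Big(\frac{-|\mathbf{d}|^2}{2\pi^2}\Big)^r+\mathcal{O}(|\mathbf{d}|^{2r-1}),
\]
using the multinomial identity $\sum_{i_1+\ldots+i_n=2r}\frac{(2r)!}{i_1!\cdots i_n!}\prod_j d_j^{i_j}=|\mathbf{d}|^{2r}$. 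The leading parts resum to
\[
\frac{[\tau_\mathbf{d}]}{V_{g,n}}\approx\exp\Big(-\frac{|\mathbf{d}|^2}{2\pi^2 g}\Big).
\]
To justify the resummation uniformly for $|\mathbf{d}|=\mathcal{O}(\sqrt g)$, choose $s$ large but fixed and control three things. First, the remainder term $\mathcal{O}(|\mathbf{d}|^{2s+2}/g^{s+1})$ is not small here, so a fixed truncation is insufficient. One must instead let $s$ grow with a Borel-type argument, using the bound $|e^{(r)}_n|\le (C_nr)^r\sum\frac{d^{i}}{i!}$. This bound gives terms of size roughly $(C_nr)^r|\mathbf{d}|^{2r}/((2r)!g^r)\approx (C'|\mathbf{d}|^2/(rg))^r$, which are summable when $|\mathbf{d}|^2/g$ is bounded. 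Second, the lower-order parts $\mathcal{O}(|\mathbf{d}|^{2r-1})/g^r$ contribute a relative error $\mathcal{O}(1/|\mathbf{d}|)=\mathcal{O}(1/\min x_j)$. Third, the partial-polynomial corrections have degree $\le 2r-4$ and are supported on a bounded range of the variables $\mathbf{d}_I$ outside $J$; when all $x_j\to\infty$ the weights there are negligible. \textbf{This step is the main obstacle:} we need an explicit remainder bound for the expansion in Theorem~\ref{thm:internb} that stays valid for $|\mathbf{d}|^2\asymp g$. I would first try to extract it from the coefficient bound of Theorem~\ref{thm:internb} by summing the series up to $s\approx c\,g/|\mathbf{d}|^2\cdot\log g$ or similar.

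\textbf{Step 4: evaluate the Gaussian-weighted sum.} On the window, $|\mathbf{d}|=|\mathbf{x}|/4+\mathcal{O}(|\mathbf{x}|^{1/2+\epsilon})$, so
\[
\exp\Big(-\frac{|\mathbf{d}|^2}{2\pi^2g}\Big)=\exp\Big(-\frac{|\mathbf{x}|^2}{32\pi^2g}\Big)\big(1+\mathcal{O}(|\mathbf{x}|^{1/2+\epsilon}/g)\big).
\]
Since $|\mathbf{x}|^2/(32\pi^2g)=\frac{1}{8\pi^2g}(|\mathbf{x}|/2)^2$, this is exactly the claimed correction. More precisely, I would compute the sum exactly by a Laplace-type estimate, shifting the Poisson weight by the linear term of the exponent. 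This shift costs only $\mathcal{O}(1/g)\cdot|\mathbf{x}|$, which is bounded and is absorbed after recentering; one should check that it gives no extra factor. The remaining sum $\sum_{d}(x/2)^{2d+1}/(2d+1)!=\sinh(x/2)=\frac12e^{x/2}(1+\mathcal{O}(e^{-x}))$ gives the factor $2^{-n}e^{|\mathbf{x}|/2}$.

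\textbf{Step 5: the mixed case.} For $j\notin J$ we have $x_j\to\tilde x_j$, so $d_j$ ranges over a bounded set. There, $e_n^{(r)}$ restricted to $\mathbf{d}_J$ large keeps the same leading form with $|\mathbf{d}_J|$, since $|\mathbf{d}|^2=|\mathbf{d}_J|^2+\mathcal{O}(|\mathbf{d}_J|)$, and the partial polynomials are of lower degree. The $j\notin J$ sums then give exactly $\sinh(\tilde x_j/2)$, and the $j\in J$ sums give $2^{-|J|}e^{|\mathbf{x}_J|/2-|\mathbf{x}_J|^2/(32\pi^2g)}$.
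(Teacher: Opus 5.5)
Step~3 carries the whole argument, and the route you sketch for it does not close. Theorem~\ref{thm:internb} gives, for each fixed $s$, a remainder $\mathcal{O}_{n,s}(|\mathbf{d}|^{2s+2}/g^{s+1})$ with no stated dependence of the constant on $s$. On your window $|\mathbf{d}|^2\asymp g$ this is only $\mathcal{O}_{n,s}(1)$. You are therefore forced to take $s=s(g)\to\infty$, and that needs an $s$-explicit remainder estimate which neither you nor the paper has.

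The coefficient bound $|e_n^{(r)}(\mathbf{d})|\le (C_nr)^r\sum_{|i|\le 2r}\mathbf{d}^i/i!$ cannot play that role. It bounds coefficients, not the distance from $[\tau_{\mathbf{d}}]/V_{g,n}$ to a partial sum, so even convergence of the formal series would not identify its sum with $[\tau_{\mathbf{d}}]/V_{g,n}$. Moreover the majorant does not converge, because you kept only its top-degree monomials. Since $\sum_{|i|=k}\mathbf{d}^i/i!=|\mathbf{d}|^k/k!$, the $r$-th term is of order $(C_nr/g)^r e^{|\mathbf{d}|}$ once $r\ge|\mathbf{d}|$, and this blows up for $r\gtrsim g/C_n$.

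The explicit error in Theorem~\ref{thm:expvol} does not help either. At $|\mathbf{x}|^2=cg$ it is $\gtrsim (C_nc(s+1))^{s+1}e^{|\mathbf{x}|/2}$, which does not tend to $0$ for any choice of $s$.

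What you would need is something like $\bigl|[\tau_{\mathbf{d}}]/V_{g,n}-\sum_{r\le s}e_n^{(r)}(\mathbf{d})g^{-r}\bigr|\le (C_n(s+1))^{s+1}g^{-s-1}\sum_{|i|\le 2s+2}\mathbf{d}^i/i!$, holding uniformly for $s\le\epsilon g$. Truncating at $s=\epsilon g$ would then also control the $\mathcal{O}_{n,r}(|\mathbf{d}|^{2r-1})$ parts and the partial polynomials, whose supports $\{|\mathbf{d}_{\{\mathbf{n}\}\setminus J}|<2r-4\}$ stop being bounded once $r$ grows.

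The paper does not supply such an estimate. Its proof takes the top-degree part of each $f_n^{(r)}$ through Lemma~\ref{lemm:sum} and sums over $r$ formally. Your variant (resum at the level of intersection numbers, then a Laplace estimate against the Poisson weights) is a different route, but it isolates the same missing step rather than avoiding it.

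A smaller slip in Step~4: on the window, $|\mathbf{d}|^2-|\mathbf{x}|^2/16=\mathcal{O}(|\mathbf{x}|^{3/2+\epsilon})$, not $\mathcal{O}(|\mathbf{x}|^{1/2+\epsilon})$. The crude window bound therefore only gives relative error $g^{-1/4+\epsilon}$, and the recentred Laplace computation is really needed to reach $\mathcal{O}(1/\min x_j)$.

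Separately, recheck the constant you import from Theorem~\ref{thm:internb}, since it fixes the exponent you are trying to prove. In the discrete derivative of Mirzakhani's recursion the weights are $a_{i-1}-a_i$ for $i\ge0$, with $a_{-1}=0$. They sum to $-\lim_i a_i=-1$, not $-\tfrac12$: Corollary~\ref{cor:polandai} concerns $\sum_{i\ge0}(a_{i+1}-a_i)=\tfrac12$ and omits the $i=0$ term $-a_0=-\tfrac12$. Redoing the first step with this gives $e_n^{(1)}(\mathbf{d})=-|\mathbf{d}|^2/\pi^2+\mathcal{O}(|\mathbf{d}|)$.

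The exact Do--Norbury identity $V_{g,2}(L,2\pi i)=\int_0^L\ell\,V_{g,1}(\ell)\,d\ell$ confirms this.
\begin{itemize}
\item Divided by $V_{g,2}$, the right side is $\frac{1}{2\pi^2g}(\cosh\frac L2-1)+\mathcal{O}(g^{-2})$, using $V_{g,1}/V_{g,2}\sim 1/(8\pi^2g)$ from Theorem~\ref{thm:expratios}.
\item The zeroth order of the left side vanishes.
\item If $e_2^{(1)}$ is, as Theorem~\ref{thm:internb} asserts, a polynomial with $d_1d_2$-coefficient $\beta$, then the $1/g$ coefficient of the left side is $-\frac{\beta}{4}\cosh\frac L2+\mathcal{O}(L^{-1}e^{L/2})$. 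This uses $\sum_d\frac{(-1)^d\pi^{2d}}{(2d+1)!}=0$ and $\sum_d\frac{d(-1)^d\pi^{2d}}{(2d+1)!}=-\tfrac12$.
\end{itemize}
Matching the two sides forces $\beta=-2/\pi^2$, whereas Theorem~\ref{thm:internb} has $\beta=-1/\pi^2$. With the corrected factor the top-degree parts become $\frac{1}{r!}(-|\mathbf{d}|^2/\pi^2)^r$. Your Steps~3--4 would then produce $\exp\bigl(\frac{|\mathbf{x}|}{2}-\frac{|\mathbf{x}|^2}{16\pi^2 g}\bigr)$ rather than the exponent in the statement.
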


The study of asymptotic expansion of volumes of moduli spaces has implications for our understanding of the behavior of counting functions on a typical large genus surface \cite{WX}. A counting function $N(X,L)$ counts the number of closed geodesics of length bounded by $L$ on the surface $X$. One can specify whether it counts simple closed geodesics or non-simple ones, separating or non-separating ones and study the share of simple non-separating geodesics among all closed geodesics. Lipnowski and Wright conjectured that a transition occurs at order $\sqrt{g}$. Among geodesics of length negligible compared to $\sqrt{g}$, non-separating simple geodesics are dominant. Beyond that, non-simple geodesics dominate. This conjecture was confirmed by the work of Wu and Xue \cite{WX}.  Our refinement of the asymptotic expansion of volumes enables us to observe what happens at the threshold by computing the expected value of the number of simple non-separating geodesics, where counting functions are random variables on the moduli space equipped with the normalised Weil–Petersson volume. See Subsection~\ref{subsec:counting} for details.

\begin{thm}\label{thm:count}
For $L=C\sqrt{g}$, 
\begin{align}
    \mathbb{E}_{WP}(N^{s}_{nsep}(X,L))\sim_g\frac{e^{L-\frac{L^{2}}{8\pi^{2}g}}}{2L}.
\end{align}
\end{thm}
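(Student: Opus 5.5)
Following Mirzakhani, the expected number of simple non-separating closed geodesics of length at most $L$ is given by the integration formula
\begin{equation*}
\mathbb{E}_{WP}(N^{s}_{nsep}(X,L))=\frac{1}{2V_{g}}\int_0^{L}V_{g-1,2}(x,x)\,x\,dx .
\end{equation*}
The plan is to insert the asymptotics of Theorem~\ref{thm:asymsqrt} for $(g-1,2)$ with $\mathbf{x}=(x,x)$, in the range where $x\to\infty$, $x=\mathcal{O}(\sqrt g)$. Then reduce the normalisation using the classical ratio $V_{g-1,2}/V_g\to 1$, which holds with a $1+\mathcal{O}(1/g)$ error from the Mirzakhani--Zograf expansion. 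Finally evaluate the resulting Laplace-type integral near its upper endpoint.

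For the first step, with $n=2$ and $|\mathbf{x}|=2x$, Theorem~\ref{thm:asymsqrt} gives
\begin{equation*}
\frac{x^{2}}{4}\frac{V_{g-1,2}(x,x)}{V_{g-1,2}}=\frac14\exp\left(x-\frac{x^{2}}{8\pi^{2}(g-1)}\right)\left(1+\mathcal{O}\left(\frac1x\right)\right).
\end{equation*}
Hence $V_{g-1,2}(x,x)\,x = V_{g-1,2}\,x^{-1}e^{x-x^{2}/(8\pi^{2}g)}(1+\mathcal{O}(1/x)+\mathcal{O}(1/g))$. Replacing $g-1$ by $g$ in the quadratic term costs a factor $e^{\mathcal{O}(x^{2}/g^{2})}=1+\mathcal{O}(1/g)$. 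The integrand becomes
\begin{equation*}
\frac{1}{2}x^{-1}e^{\phi(x)}(1+o(1)),\qquad \phi(x)=x-\frac{x^{2}}{8\pi^{2}g}.
\end{equation*}

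Next I split the integral at $x=L/2$, or at any fixed fraction of $L$. On $[0,L/2]$ the integrand is crudely bounded by Theorem~\ref{thm:expvol} with $s=0$, which gives $V_{g-1,2}(x,x)x\lesssim V_g\,x^{-1}\sinh^{2}(x/2)$. This part therefore contributes at most $\mathcal{O}(e^{L/2})$, which is exponentially negligible. On $[L/2,L]$ we have $\phi'(x)=1-x/(4\pi^{2}g)\ge 1-C/(4\pi^{2}\sqrt g)\to1$. Since $L=C\sqrt g$, the derivative $\phi'$ is $1+\mathcal{O}(g^{-1/2})$ uniformly there. Integrating by parts (or substituting $u=L-x$), the integral $\int_{L/2}^{L}x^{-1}e^{\phi(x)}dx$ equals $e^{\phi(L)}/(L\phi'(L))\,(1+\mathcal{O}(1/L))$. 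Multiplying by $\frac12$ gives $\frac{e^{L-L^{2}/(8\pi^{2}g)}}{2L}(1+o(1))$, as claimed.

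The main obstacle is uniformity. Theorem~\ref{thm:asymsqrt} is stated along sequences $x(g)\to\infty$. To integrate, I need the error $\mathcal{O}(1/x)$ uniform over $x\in[L/2,L]$ with the constant depending only on the bound $x\le C\sqrt g$. I would secure this by rereading the proof of Theorem~\ref{thm:asymsqrt} as a uniform statement over $x\in[A,C\sqrt g]$, or by a subsequence argument: if uniformity failed, a sequence of bad points $x(g)$ would contradict the theorem. Since only $x\in[L-\mathcal{O}(1),L]$ effectively contributes to the integral, uniformity on a window of bounded width around $L$ would suffice.
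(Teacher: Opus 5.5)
Your proof is correct and follows the same route as the paper: insert the asymptotics of Theorem~\ref{thm:asymsqrt} for $V_{g-1,2}(x,x)$ into Mirzakhani's integration formula, use $V_{g-1,2}/V_g = 1+\mathcal{O}(1/g)$, and evaluate the resulting integral at its upper endpoint. Your version is more careful than the paper's three-line computation in three respects. You keep the factor $2^{-n}=1/4$; the paper's displayed $4/x^{2}$ should read $1/x^{2}$, and only the latter gives $e^{L-L^{2}/(8\pi^{2}g)}/(2L)$. You justify uniformity of the error on $[L/2,L]$. You also discard $[0,L/2]$ separately, although near $x=0$ that crude bound should come from monotonicity of $V_{g-1,2}(x,x)$ (its coefficients are nonnegative) rather than from Theorem~\ref{thm:expvol}, whose error term does not vanish at $x=0$.
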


Let us recall that the prime geodesic theorem states that for a compact surface, the number of unoriented closed geodesic $N(X,L)$ has the following asymptotic in $L$, independently of the genus $g$ and of the surface $X\in\mathcal{M}_g$: 
$$N(X,L)\sim\frac{e^L}{2L}\textit{  as  }L\to \infty.$$

Our result confirms the analogy of Lipnowski-Wright conjecture with the birthday paradox. In fact, an analogous cut-off appears when the number of people approaches $\sqrt{d}$, with $d$ the number of day in a year. If $n=o(\sqrt{d})$, then with high probability no two people in the group will have the same birthday. For there to be a non-zero probability that two people have the same birthday, $n$ must be at least of order $\sqrt{d}$. In fact, the asymptotic probability that no two people share the same birthday is $P_{n,d}=\exp \left(-\frac{n^{2}}{2d}\right)\left(1+\mathcal{O}\left(\frac{n}{d}\right)\right)$.

In Section~\ref{sec:preli}, we introduce the objects and previously known results. In Section~\ref{sec:internb}, we prove Theorem~\ref{thm:internb} on the expansion of intersection numbers. In Section~\ref{sec:expvol}, we prove Theorem~\ref{thm:expvol} and Theorem~\ref{thm:asymsqrt} on the expansion of the volumes of moduli spaces. In Section~\ref{sec:countres}, we deduce Theorem~\ref{thm:count}.

\subsection*{Acknowledgments}
We are grateful to Nalini Anantharaman for insightful suggestions, to Thomas Buc d'Alché for valuable conversations, and to Laura Monk for helpful feedbacks.

\section{Preliminaries}\label{sec:preli}
In this Section, we give the necessary background to our approach. Especially, we define the Weil-Petersson metric on the moduli spaces of hyperbolic surfaces $\mathcal{M}_{g,n}$ and we present the topological recursion formula introduced by Mirzakhani to compute the volume of moduli spaces \cite{M7}.

\subsection{Geometry of the Teichmüller and moduli spaces}

Let $S_{g,n}$ be a topological surface of genus $g$ with $n$ boundary components denoted $\left\{\beta_i\right\}_{1}^{n}$. The moduli space $\mathcal{M}_{g,n}(x_1,\ldots x_n)$ is the space of hyperbolic metrics admissible on $S_{g,n}$. Its universal cover is the Teichmüller space $\mathcal{T}_{g,n}(x_1,\ldots x_n)$ and its fundamental group the mapping class group $\mathrm{Mod}_{g,n}$. A pants decomposition $\mathcal{P}=\{\alpha_i\}_1^{3g-3+n}$ is a family of disjoint simple curves on $S_{g,n}$ such that any connected components of $S_{g,n}\setminus\mathcal{P}$ is homeomorphic to a pair of pants, a surface of genus $0$ with three boundary components. It allows us to define global coordinates on $\mathcal{T}_{g,n}(\mathbf{x})$ through the lengths of and twists along $\alpha_i$. They are called the Fenchel-Nielsen coordinates $(l_i,\tau_i)_1^{3g-3+n}$.

The Teichmüller space $\mathcal{T}_{g,n}(\mathbf{x})$ is therefore a manifold of dimension $6g-6+2n$. The action of $\mathrm{Mod}_{g,n}$ on $\mathcal{T}_{g,n}(\mathbf{x})$ is not proper, it admits fixed points corresponding to surfaces with exceptional symmetries. Therefore $\mathcal{M}_{g,n}(\mathbf{x})$ is an orbifold of real dimension $6g-6+2n$. The Teichmüller space is a symplectic manifold with symplectic form $\omega_{WP}$, the Weil-Petersson form, which has a simple expression in Fenchel-Nielsen coordinates \cite{Wol83}:

\begin{align*}
    \omega_{WP}=\sum_{i=1}^{3g-3+n}\mathrm{d}l_i\wedge\mathrm{d}t_i.
\end{align*}

It defines the Weil-Petersson volume form:
$$\mathrm{dVol}_{WP}=\frac{1}{(3g-3+n)!}\bigwedge_{1}^{3g-3+n}\omega_{WP}.$$

It is invariant under the action of the mapping class group, thus it also defines a volume form on the moduli space of surfaces. The Weil-Petersson volume of the moduli space is finite \cite{Bers}.

\subsection{Volume polynomials and intersection numbers}

In \cite{M7b}, Mirzakhani proved that Weil-Petersson volumes of the moduli space of bordered surfaces $V_{g,n}(\mathbf{x})$ are polynomials in the lengths of the boundary components $\mathbf{x}$ whose coefficients linearly depend on Witten-Kontsevitch intersection numbers \cite{M7}. 

\begin{defi}
The normalized intersection number associated to $\mathbf{d}=(d_1,\ldots d_n)$ is: 
$$[\tau_{d_1}\ldots\tau_{d_n}]_{g,n}:=\frac{\prod_{i=1}^{n}2^{2d_i}(2d_i+1)!!}{d_0!}\int_{\overline{\mathcal{M}}_{g,n}}\psi_1^{d_1}\ldots \psi_n^{d_n}\cdot \omega_{WP}^{d_0}$$
where $\psi_i$ is the first Chern class of the $i$-th tautological line bundle on the Deligne-Mumford compactification of the moduli space $\overline{\mathcal{M}}_{g,n}$ and $d_0=3g-3+n-|\mathbf{d}|$.
\end{defi}

The understanding of the algebraic construction of intersection numbers will not be needed in our proof. The only properties one should keep in mind are the following: 

\begin{prop}\label{prop:internbMZ}
\begin{enumerate}
    \item Intersection numbers are symmetric: for any $\sigma\in\mathfrak{S}_n$, 
    $$[\tau_{d_{\sigma(1)}}\ldots\tau_{d_{\sigma(n)}}]_{g,n}=[\tau_{d_1}\ldots\tau_{d_n}]_{g,n}.$$
    \item Intersection numbers lie in $\mathbb{Q}[\pi^{2}]$. More precisely, for any $\mathbf{d}$:
    $$[\tau_{d_{1}}\ldots\tau_{d_{n}}]_{g,n}\in\pi^{6g-6+2n-2|\mathbf{d}|}\cdot\mathbb{Q}.$$
    \item Intersection numbers vanish for $|\mathbf{d}|>3g-3+n$. 
\end{enumerate}
\end{prop}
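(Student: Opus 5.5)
The three properties follow from standard facts about tautological classes on $\overline{\mathcal{M}}_{g,n}$; I would check them in the order stated, with only a convention needed for part (3).

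\emph{Symmetry.} Take $\sigma\in\mathfrak{S}_n$. Relabelling the marked points gives an automorphism $\phi_\sigma$ of $\overline{\mathcal{M}}_{g,n}$, and it satisfies $\phi_\sigma^*\psi_i=\psi_{\sigma(i)}$, possibly with $\sigma^{-1}$ depending on convention. The Weil--Petersson form does not depend on the labelling of the punctures, so $\phi_\sigma^*\omega_{WP}=\omega_{WP}$. Since $\phi_\sigma$ has degree one, the top intersection is unchanged. The normalising prefactor $\prod_i 2^{2d_i}(2d_i+1)!!/d_0!$ is invariant under permuting the $d_i$, and so is $d_0$. This gives (1).

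\emph{Rationality.} I would invoke Wolpert's theorem that $\omega_{WP}$ extends to $\overline{\mathcal{M}}_{g,n}$ and that its cohomology class is $2\pi^2\kappa_1$. Here $\kappa_1$ is the first Mumford--Morita--Miller class, a rational class, as are the $\psi_i$. Substituting gives
\[
\int_{\overline{\mathcal{M}}_{g,n}}\psi_1^{d_1}\cdots\psi_n^{d_n}\,\omega_{WP}^{d_0}=(2\pi^2)^{d_0}\int_{\overline{\mathcal{M}}_{g,n}}\psi_1^{d_1}\cdots\psi_n^{d_n}\,\kappa_1^{d_0}.
\]
The last integral is a rational number. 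It is a degree of a rational cycle on a proper Deligne--Mumford stack, so orbifold denominators are allowed but the value is still in $\mathbb{Q}$. The prefactor is rational, and $\pi^{2d_0}=\pi^{6g-6+2n-2|\mathbf{d}|}$, which gives (2).

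\emph{Vanishing.} The complex dimension of $\overline{\mathcal{M}}_{g,n}$ is $3g-3+n$. If $|\mathbf{d}|>3g-3+n$, then $d_0<0$, so the formula has no meaning, and in any case the class $\psi_1^{d_1}\cdots\psi_n^{d_n}$ has degree above the top degree and is already zero. The natural convention is therefore $[\tau_{d_1}\ldots\tau_{d_n}]_{g,n}=0$, which is consistent with Mirzakhani's expression for the coefficients of $V_{g,n}(\mathbf{x})$. This gives (3).

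The only non-formal input is Wolpert's identification $[\omega_{WP}]=2\pi^2\kappa_1$ on the compactification. I would quote it rather than reprove it. Everything else is bookkeeping with the normalising constants.
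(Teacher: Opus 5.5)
Your proposal is correct and follows the same route as the paper's sketch: symmetry by relabelling the marked points, rationality via Wolpert's identity $[\omega_{WP}]=2\pi^2\kappa_1$ together with rationality of integrals of tautological classes on the orbifold $\overline{\mathcal{M}}_{g,n}$, and vanishing because $\psi_1^{d_1}\cdots\psi_n^{d_n}$ exceeds the top degree. Your remark that for $|\mathbf{d}|>3g-3+n$ one has $d_0<0$, so part (3) is really a convention consistent with Mirzakhani's coefficient formula, is a small point of care that the paper glosses over.
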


\begin{proof}[Sketch of proof]
We refer to \cite{HM} for the construction of tautological classes and of Deligne-Mumford compactification. Let us however give a few arguments of why these properties hold. The symmetry is immediate. For the two latter properties, let us first recall that Witten-Kontsevitch intersection numbers were originally defined for $|\mathbf{d}|=3g-3+n$ \cite{Wit}. As the first Chern classes $\psi_i$ live in $H^{2}(\overline{\mathcal{M}}_{g,n},\mathbb{Z})$ and $\overline{\mathcal{M}}_{g,n}$ has real dimension $6g-6+2n$, the following expression makes sense: 
$$[\tau_{d_1}\ldots\tau_{d_n}]_{g,n}:=\int_{\overline{\mathcal{M}}_{g,n}}\psi_1^{d_1}\ldots \psi_n^{d_n}.$$ 
The integral lies in $\mathbb{Q}$ because $\overline{\mathcal{M}}_{g,n}$ is an orbifold. For $|\mathbf{d}|<3g-3+n$, it is natural to multiply by the tautological class $\kappa_1\in H^{2}(\overline{\mathcal{M}}_{g,n},\mathbb{Z})$, which verifies $[\omega_{WP}]=2\pi^{2}\kappa_1$, to obtain a top degree class. Thus, as we defined it, $[\tau_{d_{1}}\ldots\tau_{d_{n}}]_{g,n}$ lies in $\pi^{2(3g-3+n-|\mathbf{d}|)}\cdot \mathbb{Q}$. This yields the second property. Finally, for $|\mathbf{d}|>3g-3+n$, $$\psi_1^{d_1}\ldots \psi_n^{d_n}\in H^{2|\mathbf{d}|}(\overline{\mathcal{M}}_{g,n},\mathbb{Z})$$ which vanishes as $2|\mathbf{d}|$ is greater than the dimension of the moduli space, and we obtain the third property. 
\end{proof}

Having defined intersection numbers, we now express the Weil–Petersson volume polynomials in terms of them \cite{M7}.

\begin{thm}[Mirzakhani]
The coefficients of the volume polynomial 
$$V_{g,n}(L_1,\ldots L_n)=\sum_{|\mathbf{d}|\leq 3g-3+n}C_g(\mathbf{d})\cdot L_1^{2d_1}\ldots L_{n}^{2d_n}$$
are expressed as 
$$C_g(d_1,\ldots d_n)=\frac{1}{2^{2|\mathbf{d}|}\mathbf{d}!}[\tau_{d_1}\ldots\tau_{d_n}]_{g,n}.$$
Here $\mathbf{d}=(d_1\ldots d_n)$, $\mathbf{d}!=\prod_{i=0}^{n}d_i!$ and $|\mathbf{d}|=\sum_{i=0}^{n}d_i$. 
\end{thm}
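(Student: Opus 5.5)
The plan is to follow Mirzakhani's symplectic reduction argument. The point is to exhibit $\mathcal{M}_{g,n}(\mathbf{x})$ as a symplectic quotient of a larger space carrying a Hamiltonian torus action, and then read off the dependence on $\mathbf{x}$ from the Duistermaat--Heckman theorem.

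\textbf{Step 1: the torus action.} Consider the moduli space $\widehat{\mathcal{M}}_{g,n}$ of hyperbolic surfaces with geodesic boundary together with a marked point on each boundary component. It carries a natural action of $T=(S^1)^n$ that moves the marked points along the boundaries, suitably rescaled by the boundary lengths.

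In Fenchel--Nielsen type coordinates obtained by completing the boundary curves into a pants decomposition, the boundary lengths $x_i$ and the twist-like parameters of the marked points are conjugate, by Wolpert's formula $\omega_{WP}=\sum \mathrm{d}l_i\wedge \mathrm{d}t_i$. From this one checks that the action is Hamiltonian with moment map
\[
\mu=\left(\tfrac{x_1^2}{2},\ldots,\tfrac{x_n^2}{2}\right),
\]
and that $\mu^{-1}(\mathbf{x}^2/2)/T\cong\mathcal{M}_{g,n}(\mathbf{x})$ symplectically, as orbifolds.

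\textbf{Step 2: variation of the reduced class.} The level set $\mu^{-1}(\mathbf{x}^2/2)$ is a $T$-bundle over $\mathcal{M}_{g,n}(\mathbf{x})$. Identify the first Chern classes of its circle factors with the tautological classes $\psi_i$, extended to $\overline{\mathcal{M}}_{g,n}$. Then Duistermaat--Heckman gives
\[
[\omega_{\mathbf{x}}]=[\omega_{WP}]+\sum_{i}\tfrac{x_i^2}{2}\psi_i
\]
under the identification of all reduced spaces with $\overline{\mathcal{M}}_{g,n}$. For small $\mathbf{x}$ this identification can be made via the degeneration to cusps, i.e. the $\mathbf{x}\to0$ limit. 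One then extends to all $\mathbf{x}$, using the fact that the family of reduced spaces has no walls, since $T$ acts locally freely on the relevant level sets.

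\textbf{Step 3: expansion.} Integrating the top power of this class gives
\[
V_{g,n}(\mathbf{x})=\int_{\overline{\mathcal{M}}_{g,n}}\exp\Big(\omega_{WP}+\sum_i \tfrac{x_i^2}{2}\psi_i\Big).
\]
Expanding the exponential, the coefficient of $x_1^{2d_1}\cdots x_n^{2d_n}$ is
\[
\frac{1}{2^{|\mathbf{d}|}\,\mathbf{d}!\,d_0!}\int_{\overline{\mathcal{M}}_{g,n}}\psi_1^{d_1}\cdots\psi_n^{d_n}\,\omega_{WP}^{d_0},\qquad d_0=3g-3+n-|\mathbf{d}|.
\]
The terms with $|\mathbf{d}|>3g-3+n$ vanish by Proposition~\ref{prop:internbMZ}(3). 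What remains is to rewrite this coefficient in terms of the normalized number $[\tau_{d_1}\ldots\tau_{d_n}]_{g,n}$, so that it takes the stated form $C_g(\mathbf{d})=[\tau_{d_1}\ldots\tau_{d_n}]_{g,n}/(2^{2|\mathbf{d}|}\mathbf{d}!)$.

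\textbf{Main obstacle and a flag on conventions.} The main obstacle is Step 2. It requires rigorously identifying the reduced class, and in particular the Chern classes of the circle bundles, with $\psi_i$ on the compactification. This means controlling the behaviour near the boundary divisors of $\overline{\mathcal{M}}_{g,n}$ and the orbifold issues coming from surfaces with automorphisms. I would handle this by working on the Deligne--Mumford compactification from the start and using a $T$-equivariant extension of $\omega_{WP}$, which is Wolpert's continuity of $\omega_{WP}$ up to the boundary.

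Step 3 itself is pure bookkeeping, but it needs a correction. Comparing the displayed coefficient with the definition of $[\tau]$, the factor $\prod_i 2^{2d_i}(2d_i+1)!!$ in that definition does not cancel against $1/(2^{|\mathbf{d}|}\mathbf{d}!)$ to give $1/(2^{2|\mathbf{d}|}\mathbf{d}!)$. So the normalisations of $[\tau]$ and of $C_g(\mathbf{d})$ as written in the paper are not mutually consistent. I would fix them against Mirzakhani's original convention and carry the extra factor $\prod_i 2^{2d_i}(2d_i+1)!!$ explicitly, rather than rely on the identity exactly as stated.
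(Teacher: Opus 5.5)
Your symplectic-reduction and Duistermaat--Heckman argument is Mirzakhani's own proof in \cite{M7}. The paper cites that proof without reproducing it, so your approach is the same.

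Your normalization flag is also correct. With the paper's definition of $[\tau_{d_1}\ldots\tau_{d_n}]_{g,n}$ and $(2d+1)!=2^d\,d!\,(2d+1)!!$, the coefficient is $C_g(\mathbf{d})=[\tau_{d_1}\ldots\tau_{d_n}]_{g,n}/\bigl(2^{2|\mathbf{d}|}\prod_{i=1}^n(2d_i+1)!\bigr)$. So $\mathbf{d}!$ in the statement should read $\prod_i(2d_i+1)!$, which is exactly the form the paper uses in Section~\ref{sec:expvol}. In the orbifold convention your reduction produces, Mirzakhani's extra factor $2^{\delta_{g,1}\delta_{n,1}}$ for $V_{1,1}$ does not appear.
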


\subsection{Topological recursion}
From the topological recursion on volumes, Mirzakhani deduces a recursive formula for intersection numbers \cite{M7}. Mirzakhani's recursion formula expresses $V_{g,n}$ in terms of $V_{g',n'}$ for $S_{g',n'}$ a connected component of $S_{g,n}\setminus P$, where $P$ runs over all the pair of pants embedded in $S_{g,n}$ such that $\beta_1\subset \partial P$. The contribution of $V_{g',n'}$ to $V_{g,n}$ depends on the topological type of $P$. If $P$ disconnects the surface, the corresponding $V_{g',n'}$'s contribute to the third term of the recursion. If $\beta_j\subset\partial P$ for $j\neq 1$, ie $P$ shares an other boundary component with $S_{g,n}$, it contributes to the first term. Else, it contributes to the second term. Figure~\ref{fig:recursion} illustrates the three different cases. In all three cases, the opposite of the Euler characteristic decreases between $S_{g,n}$ and $S_{g',n'}$. The identity of polynomials implies an identity on the coefficients, which is called the topological recursion for intersection numbers. It relates intersection numbers on the moduli space $\mathcal{M}_{g,n}$ to those on moduli spaces $\mathcal{M}_{g',n'}$ for $2g'+n'<2g+n$ \cite{M7}. 

\begin{figure}
    \centering
    \includegraphics[width=\textwidth]{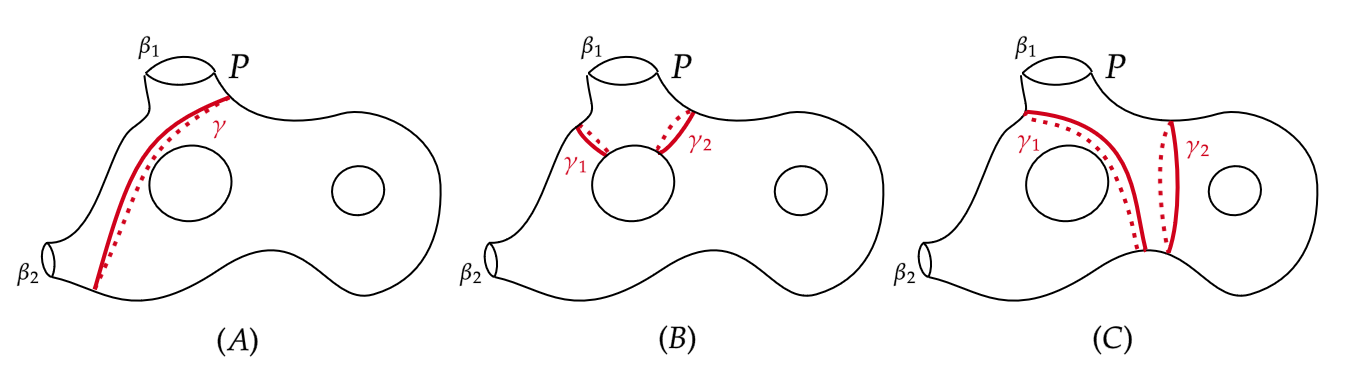}
    \caption{The three terms of the topological recursion.}
    \label{fig:recursion}
\end{figure}

\begin{thm}[Mirzakhani]
Put $a_{i} =(1-2^{1-2i})\,\zeta(2i),$ where $\zeta$ is the Riemann zeta function and $i\in\mathbb{Z}_{\geq 0}$.
Then
$$
[ \tau_{d_{1}}\ldots\tau_{d_{n}}]_{g,n}= \mathcal{A}_{g,n}(\mathbf{d}) + \mathcal{B}_{g,n}(\mathbf{d})+ \mathcal{C}_{g,n}(\mathbf{d}),
$$
where 
\begin{align*}
\mathcal{A}_{g,n}(\mathbf{d})=&\;8 \; \sum_{j=2}^{n} \sum_{l=0}^{d_{0}} (2d_{j}+1) \; a_{l} \left[\tau_{d_{1}+d_{j}+l-1} \prod_{i\not=1,j} \tau_{d_{i}}\right]_{g,n-1},\\
\mathcal{B}_{g,n}(\mathbf{d})= &\;16 \;\sum_{l=0}^{d_{0}} \sum_{\genfrac{}{}{0pt}{}{k_{1}+k_{2}=}{=l+d_{1}-2}} a_{l} \left[\tau_{k_{1}} \tau_{k_{2}} \prod_{i\not=1} \tau_{d_{i}}\right]_{g-1,n+1},\\
\mathcal{C}_{g,n}(\mathbf{d})=&\;16\sum_{\genfrac{}{}{0pt}{}{g_{1}+g_{2}=g}{I\amalg J=\{\mathbf{n}\}}} \sum_{l=0}^{d_{0}} \sum_{\genfrac{}{}{0pt}{}{k_{1}+k_{2}=}{=l+d_{1}-2}} a_{l} \; \left[\tau_{k_{1}} \prod_{i\in I } \tau_{d_{i}}\right]_{g_1,|I|+1} \cdot \left[\tau_{k_{2}} \prod_{i\in J} \tau_{d_{i}}\right]_{g_2,|J|+1}
\end{align*}   
where $d_0=3g-3+n-|\mathbf{d}|$.
\end{thm}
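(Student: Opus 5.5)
The plan is to derive the stated identity from Mirzakhani's integral recursion for the volume polynomials $V_{g,n}(\mathbf{L})$ themselves, and then to compare coefficients using the preceding theorem expressing $C_g(\mathbf{d})$ in terms of $[\tau_{d_1}\ldots\tau_{d_n}]_{g,n}$. I take as input the recursion from \cite{M7}, obtained from the McShane--Mirzakhani identity by integrating over $\mathcal{M}_{g,n}(\mathbf{L})$ and unfolding along the pairs of pants $P$ containing $\beta_1$. With kernel
$$H(x,y)=\frac{1}{1+e^{(x+y)/2}}+\frac{1}{1+e^{(x-y)/2}},$$
it reads
\begin{align*}
\frac{\partial}{\partial L_1}\bigl(L_1V_{g,n}(\mathbf{L})\bigr)=&\;\frac12\sum_{j=2}^n\int_0^\infty x\bigl(H(x,L_1+L_j)+H(x,L_1-L_j)\bigr)V_{g,n-1}(x,\hat{\mathbf{L}}_{1,j})\,dx\\
&+\frac12\int_0^\infty\!\!\int_0^\infty xy\,H(x+y,L_1)\Bigl(V_{g-1,n+1}(x,y,\hat{\mathbf{L}}_1)+\sum V_{g_1,|I|+1}(x,\mathbf{L}_I)V_{g_2,|J|+1}(y,\mathbf{L}_J)\Bigr)dx\,dy,
\end{align*}
where the sum runs over stable splittings. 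The three terms correspond exactly to the three topological types of $P$ in Figure~\ref{fig:recursion}, and hence to $\mathcal{A}$, $\mathcal{B}$ and $\mathcal{C}$.

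The key computational step is the moment integral of the kernel. Expanding $1/(1+e^{u})$ in the region $x>|y|$, or by a direct Mellin-type computation, one gets for $k\ge0$
$$F_{2k+1}(t):=\int_0^\infty x^{2k+1}H(x,t)\,dx=(2k+1)!\sum_{i=0}^{k+1}\zeta(2i)\,(2^{2i+1}-4)\frac{t^{2k+2-2i}}{(2k+2-2i)!},$$
with the convention $\zeta(0)(2-4)=1$ in the $i=0$ term. The combination $(2^{2i+1}-4)\zeta(2i)=2^{2i+1}(1-2^{1-2i})\zeta(2i)=2^{2i+1}a_i$ is what produces the $a_l$. For the double integrals, the substitution $x+y=u$ gives
$$\int\!\!\int x^{2k_1+1}y^{2k_2+1}H(x+y,t)\,dx\,dy=\frac{(2k_1+1)!(2k_2+1)!}{(2k_1+2k_2+3)!}F_{2k_1+2k_2+3}(t).$$
This reduces both $\mathcal{B}$ and $\mathcal{C}$ to single moments.

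Next I insert $V_{g',n'}=\sum C_{g'}(\mathbf{k})\prod L_i^{2k_i}$ and expand everything as polynomials in $\mathbf{L}$. On the left, the coefficient of $L_1^{2d_1}\prod_{i\ge2}L_i^{2d_i}$ is $(2d_1+1)C_g(\mathbf{d})$. In the first term, $F(L_1+L_j)+F(L_1-L_j)$ keeps only the even powers of $L_j$. The binomial coefficient $\binom{m}{2d_1}$ with $m=2(d_1+d_j+l-1)+\ldots$, combined with the $\frac{1}{2^{2|\mathbf{d}|}\mathbf{d}!}$ normalization, leaves the factor $(2d_j+1)$ and the constant $8$. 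In the second and third terms, the coefficient of $L_1^{2d_1}$ in $F_{2m+1}(L_1)$ forces $m+1=d_1+l$, that is $k_1+k_2=l+d_1-2$. After renormalizing, the factorials $(2k_i+1)!$ against $2^{2k_i}k_i!$ become the double factorials $(2k_i+1)!!$ built into $[\cdot]$, giving the constant $16$. The range $0\le l\le d_0$ is automatic, because the intersection numbers on the right vanish when the degree exceeds the dimension, by Proposition~\ref{prop:internbMZ}(3).

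The main obstacle I expect is purely the bookkeeping of normalizations: powers of $2$, the factor $\frac12$ in the recursion, the symmetry factor for the unordered pairs $\{x,y\}$ in $\mathcal{B}$ and $\mathcal{C}$, and the relation $(2d+1)!!\,2^{d}d!=(2d+1)!$. Getting the constants $8$ and $16$ exactly right hinges on these. I would check them first on small cases, for instance $V_{0,4}$ and $V_{1,1}$, before writing the general matching.
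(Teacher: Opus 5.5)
Your route is the right one. The paper gives no proof of this recursion; it only cites Mirzakhani, whose derivation is exactly the coefficient extraction you describe. Your kernel, the moment formula for $F_{2k+1}$ (with $\zeta(0)=-\frac12$, so the $i=0$ term is $1=2a_0$), the beta-integral reduction of the double integrals, the index constraints $k=d_1+d_j+l-1$ and $k_1+k_2=l+d_1-2$, and the automatic cutoff $l\le d_0$ from Proposition~\ref{prop:internbMZ}(3) are all correct. On the symmetry factor you worry about: take the disconnected sum in the volume recursion over ordered splittings $I\amalg J=\{2,\ldots,n\}$. Then the prefactor $\frac12$ is the only symmetry factor, and $\mathcal{C}_{g,n}$ is likewise an ordered sum. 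For instance, for $(0,5)$ all six ordered choices of $I$ are needed to recover $[\tau_0^5]_{0,5}=10\pi^4$.

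The step that fails as written is the normalization you plan to insert. The relation $C_g(\mathbf d)=\frac{1}{2^{2|\mathbf d|}\mathbf d!}[\tau_{d_1}\ldots\tau_{d_n}]_{g,n}$ in the paper's statement of Mirzakhani's theorem is a misprint. Combine the paper's own definition of $[\cdot]$ with Mirzakhani's $C_g(\mathbf d)=\frac{1}{2^{|\mathbf d|}\mathbf d!\,d_0!}\int_{\overline{\mathcal M}_{g,n}}\psi_1^{d_1}\cdots\psi_n^{d_n}\,\omega_{WP}^{d_0}$ and $(2d+1)!=(2d+1)!!\,2^d d!$. This gives $C_g(\mathbf d)=[\tau_{d_1}\ldots\tau_{d_n}]_{g,n}\big/\prod_i 2^{2d_i}(2d_i+1)!$, which is the form the paper actually uses in Section~\ref{sec:expvol}. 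As a check, $[\tau_1\tau_0^3]_{0,4}=12$, and the $L_1^2$-coefficient of $V_{0,4}$ is $\frac12=12/(4\cdot 3!)$, not $12/4$.

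With the literal $\mathbf d!$, the $\mathcal A$-comparison leaves an $l$-dependent factor $(2k+1)!/k!$ with $k=d_1+d_j+l-1$. So neither the constant $8$ nor the factor $2d_j+1$ emerges, contrary to what you assert.

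With the correct normalization, everything closes:
the factorials $(2k+1)!$ in $F_{2k+1}$ and $(2k_1+1)!(2k_2+1)!$ from the beta integral cancel exactly against the denominators of the inserted coefficients, so no double factorials need to be produced.
The powers of two collapse to $\frac12\cdot 2\cdot 2^{2l+1}\cdot 2^{-2k}=8\cdot 2^{-2d_1-2d_j}$ in $\mathcal A$, and to $\frac12\cdot 2^{2l+1}\cdot 2^{-2(k_1+k_2)}=16\cdot 2^{-2d_1}$ in $\mathcal B$ and $\mathcal C$.
On the left, $(2d_1+1)/(2d_1+1)!=1/(2d_1)!$ matches the $t^{2d_1}/(2d_1)!$ term of $F$.
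In $\mathcal A$, $\binom{2d_1+2d_j}{2d_j}/(2d_1+2d_j)!=1/((2d_1)!(2d_j)!)$, set against the left-hand $1/(2d_j+1)!$, produces exactly the factor $2d_j+1$.

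Finally, the identity only holds for $2g-2+n\ge 2$: at $(1,1)$ the right side is $0$ while $[\tau_1]_{1,1}=\frac12$. Wherever $V_{1,1}$ enters the right side, it must be $\frac1{48}(L^2+4\pi^2)$, matching $[\tau_0]_{1,1}=\frac{\pi^2}{12}$ and $[\tau_1]_{1,1}=\frac12$. It must not be $\frac1{24}(L^2+4\pi^2)$, which counts the generic elliptic involution differently. Keep this in mind for your planned $V_{1,1}$ check.
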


The sequence $(a_i)_{i\geq 0}$ has the following properties \cite{MZ}: 

\begin{lem}[Mirzakhani-Zograf]\label{lem:sumoveri}
For $j \in {\mathbb Z}$,  $$\sum_{i=0}^{\infty} i^{j} (a_{i+1}-a_{i})<\infty.$$ 
In particular, 
$$\sum_{i=0}^{\infty} (a_{i+1}-a_{i})=\frac{1}{2}\qquad\text{ and }\qquad \sum_{i=0}^{\infty} i(a_{i+1}-a_{i})=\frac{1}{4}$$
\end{lem}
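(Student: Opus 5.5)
We prove the lemma by working with the Dirichlet eta function rather than with $\zeta$ directly. For $i\geq 1$, absolute convergence gives
$$a_i=(1-2^{1-2i})\zeta(2i)=\eta(2i)=\sum_{m\geq 1}\frac{(-1)^{m+1}}{m^{2i}}.$$
For $i=0$ we use the analytic continuation: $a_0=(1-2)\zeta(0)=\tfrac12$.

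\emph{Finiteness.} From the series we get $1-a_i=\sum_{m\geq 2}(-1)^m m^{-2i}$ for $i\geq 1$. This is an alternating series with decreasing terms, so $0<1-a_i\leq 2^{-2i}$. Hence $a_{i+1}-a_i=(1-a_i)-(1-a_{i+1})=\mathcal{O}(4^{-i})$. For every $j\in\mathbb{Z}$, the series $\sum_i i^j(a_{i+1}-a_i)$ is therefore dominated by $\sum_i i^j4^{-i}<\infty$. When $j<0$, the term $i=0$ is meaningless and is simply omitted, i.e. the sum starts at $i=1$.

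\emph{First identity.} The sum telescopes:
$$\sum_{i=0}^{N-1}(a_{i+1}-a_i)=a_N-a_0.$$
This tends to $1-\tfrac12=\tfrac12$, since $a_N\to 1$ by the bound above.

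\emph{Second identity.} We apply Abel summation:
$$\sum_{i=0}^{N-1}i(a_{i+1}-a_i)=Na_N-\sum_{i=1}^{N}a_i=\sum_{i=1}^{N}(1-a_i)-N(1-a_N).$$
Since $N(1-a_N)\leq N4^{-N}\to 0$, the limit equals $\sum_{i\geq 1}(1-a_i)=\sum_{i\geq1}\sum_{m\geq2}(-1)^m m^{-2i}$.

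We now exchange the two sums. This is the only point requiring care, and it is justified because $\sum_{m\geq 2}\sum_{i\geq 1}m^{-2i}=\sum_{m\geq2}\frac{1}{m^2-1}<\infty$. After the exchange we obtain
$$\sum_{m\geq 2}\frac{(-1)^m}{m^2-1}=\frac12\sum_{m\geq2}(-1)^m\Big(\frac{1}{m-1}-\frac{1}{m+1}\Big).$$
We split this into two convergent alternating series. The first is $\sum_{m\geq2}\frac{(-1)^m}{m-1}=\log 2$. The second is $\sum_{m\geq2}\frac{(-1)^m}{m+1}=\frac13-\frac14+\cdots=\log 2-1+\frac12$. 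Their difference is $\tfrac12$, so the total equals $\frac12\cdot\frac12=\frac14$, as claimed.
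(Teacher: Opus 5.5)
Your proof is correct. The ingredients all check out:
\begin{itemize}
\item the value $a_0=\tfrac12$ from $\zeta(0)=-\tfrac12$;
\item the alternating-series bound $0<1-a_i\le 4^{-i}$;
\item the summation-by-parts identity $\sum_{i=0}^{N-1}i(a_{i+1}-a_i)=\sum_{i=1}^{N}(1-a_i)-N(1-a_N)$;
\item the Tonelli justification via $\sum_{m\ge2}(m^2-1)^{-1}=\tfrac34$;
\item the final evaluation $\tfrac12\bigl(\log 2-(\log 2-\tfrac12)\bigr)=\tfrac14$.
\end{itemize}
Reading the case $j<0$ as a sum starting at $i=1$ is the only sensible interpretation. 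The paper itself gives no proof of this lemma; it is quoted from Mirzakhani--Zograf \cite{MZ}. So your argument is a self-contained proof of a cited fact, not a variant of an argument in the text.

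For comparison, another natural route uses the generating function $\sum_{i\ge0}a_it^i=\frac{\pi\sqrt t}{2\sin(\pi\sqrt t)}$. The function $K(t)=(1-t)\sum_{i\ge0}a_it^i$ is holomorphic on $|t|<4$, and $\sum_{i\ge0}(a_{i+1}-a_i)t^i=(K(t)-\tfrac12)/t$. Hence for $j\ge0$ every moment is obtained by applying $(t\frac{d}{dt})^j$ to this function at $t=1$. From the local expansion $K=1+\tfrac32(\sqrt t-1)+\mathcal{O}((\sqrt t-1)^2)$ one reads off $K(1)-\tfrac12=\tfrac12$ and $K'(1)-\tfrac12=\tfrac34-\tfrac12=\tfrac14$. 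That route packages all moments into one analytic function and explains the geometric decay through the radius of convergence $4$.

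Your route is completely elementary, and its explicit bound gives more than finiteness. Since $1-a_i$ and $1-a_{i+1}$ both lie in $[0,4^{-i}]$, you get $|a_{i+1}-a_i|\le 4^{-i}$ for $i\ge1$. Comparing with $\int_1^\infty x^r4^{-x}\,dx$ then gives $\sum_{i\ge1}i^r|a_{i+1}-a_i|\le 4\,r!/(\log 4)^{r+1}$. This is a factorial moment estimate of the same kind as Lemma~\ref{lem:HMT1}, which the paper imports separately from \cite{HMT}.
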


Lemma~\ref{lem:sumoveri} implies the following corollary:

\begin{cor}\label{cor:polandai}
For $p$ a polynomial in $n+1$ variables of degree $m$,
$$\sum_{i=0}^{\infty} p(i,d_1,d_2,\ldots d_n)(a_{i+1}-a_{i})=\tilde{p}(d_1,d_2,\ldots d_n)$$ with $\tilde{p}$ a polynomial in $n$ variables of degree $m$. The coefficient before $d_1^{i_1}\ldots d_n^{i_n}$ for $\sum_{j=0}^{n}i_j=m$ in $\tilde{p}$ is half the coefficient of $d_1^{i_1}\ldots d_n^{i_n}$ in $p$.
\end{cor}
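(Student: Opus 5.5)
The plan is to rewrite the sum by Abel summation so that it becomes a combination of the moment sums controlled by Lemma~\ref{lem:sumoveri}. Write $p(i,\mathbf{d})=\sum_{k=0}^{m} p_k(\mathbf{d})\, i^{k}$ by expanding in powers of $i$; each $p_k$ is a polynomial in $d_1,\ldots,d_n$ of degree at most $m-k$. Linearity then gives
\[
\sum_{i\geq 0} p(i,\mathbf{d})(a_{i+1}-a_i)=\sum_{k=0}^{m} p_k(\mathbf{d})\, S_k, \qquad S_k:=\sum_{i\geq 0} i^{k}(a_{i+1}-a_i).
\]
The rearrangement is legitimate because each $S_k$ converges absolutely. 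Since $a_i\to 1$ monotonically with $a_{i+1}-a_i>0$ decaying like $2^{-2i}$, Lemma~\ref{lem:sumoveri} says precisely that every $S_k$ is a finite real constant. Hence $\tilde p:=\sum_k S_k\, p_k$ is a polynomial in $\mathbf{d}$ of degree at most $m$.

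For the leading coefficients, take a monomial $d_1^{i_1}\cdots d_n^{i_n}$ with $\sum_j i_j=m$. It can arise only from $k=0$, since for $k\geq 1$ the polynomial $p_k$ has degree at most $m-k<m$. Its coefficient in $p_0$ equals its coefficient in $p$, namely the coefficient of $i^0 d_1^{i_1}\cdots d_n^{i_n}$. Multiplying by $S_0=\tfrac12$, also from Lemma~\ref{lem:sumoveri}, gives exactly half of that coefficient, as claimed.

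It remains to check that the degree of $\tilde p$ is exactly $m$. This holds whenever $p$ has a degree-$m$ monomial with no power of $i$, because that coefficient is halved and so stays nonzero. If every top-degree monomial of $p$ involves $i$, then $\tilde p$ has degree at most $m$, and the sum $S_1=\tfrac14$ governs the next layer. This degenerate case is the only real subtlety. I would read the statement as ``degree at most $m$, with top-degree part $\tfrac12\,p(0,\mathbf{d})_{\text{top}}$,'' which is how it will be used. The proof is then immediate from linearity and the finiteness of the moments $S_k$.
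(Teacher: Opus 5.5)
Your argument is correct and is the reasoning the paper intends when it says the corollary follows from Lemma~\ref{lem:sumoveri}, which it does not prove further: expand $p$ in powers of $i$, use that each moment $S_k$ is finite, and note that only the $i^0$ part can reach total degree $m$ in $\mathbf{d}$, where it is multiplied by $S_0=\tfrac12$. Your caveat is also right. Read literally, ``degree $m$'' fails when every top-degree monomial of $p$ involves $i$ (e.g.\ $p=i$ gives $\tilde p=S_1=\tfrac14$), so ``degree at most $m$'' is the correct statement. That weaker form is enough for the paper's applications, where $p$ has the form $q(d_1+d_j+i-1,\ldots)$ and the degree is in fact preserved.
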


\subsection{Ratios of Weil-Petersson volumes}
From topological recursion and other relations on intersection numbers, Mirzakhani and Zograf deduce the expansion of certain ratios of moduli spaces \cite{MZ}. The ratios concern volumes of moduli spaces of punctured surfaces, ie $V_{g,n}=V_{g,n}(0,\ldots 0)$. It can then be generalized through the study of the ratio $V_{g,n}(\mathbf{x})/V_{g,n}$, whose expansion is described in our Theorem~\ref{thm:expvol}. 

\begin{thm}[Mirzakhani-Zograf]\label{thm:expratios}
\begin{enumerate}
\item Given $n\geq 0$, $i\geq 1$, there exist $b_{n}^{(i)},\;c_{n}^{(i)}$ independent of $g$ such that for any $s\geq1$, as $g\rightarrow \infty$:
\begin{align}
\frac{4\pi^2 (2g-2+n)V_{g,n}}{V_{g,n+1}} = &1+ \frac{b_{n}^{(1)}}{g}+\ldots+\frac{b_{n}^{(s)}}{g^{s}}
+\mathcal{O}_{n,s}\left(\frac{1}{g^{s+1}}\right),\label{equ:n/n+1}\\
\frac{V_{g,n}} {V_{g-1,n+2}}=&1+ \frac{c_{n}^{(1)}}{g}+\ldots+\frac{c_{n}^{(s)}}{g^{s}}+\mathcal{O}_{n,s}\left(\frac{1}{g^{s+1}}\right).\label{equ:g-1/g}
\end{align}
\end{enumerate}
\end{thm}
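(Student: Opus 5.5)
The statement is due to Mirzakhani and Zograf, and I would follow their strategy: combine Mirzakhani's recursion with a few auxiliary identities on intersection numbers, and then run an induction on the order $s$ of the expansion. The two auxiliary identities I would use are the dilaton and string type relations obtained by integrating over the fibres of the forgetful map $\overline{\mathcal{M}}_{g,n+1}\to\overline{\mathcal{M}}_{g,n}$, namely
\[
\frac{\partial V_{g,n+1}}{\partial L}(\mathbf{L},2\pi i)=2\pi i\,(2g-2+n)V_{g,n}(\mathbf{L}),\qquad V_{g,n+1}(\mathbf{L},2\pi i)=\sum_{k=1}^n\int_0^{L_k}L_kV_{g,n}\,dL_k.
\]
At $\mathbf{L}=0$ these give exact linear relations expressing $(2g-2+n)V_{g,n}$ and $V_{g,n}$ as alternating sums of $[\tau_{d}\tau_0^{n}]_{g,n+1}$ weighted by $\pi^{2d}/(2d+1)!$-type coefficients. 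Dividing by $V_{g,n+1}$ turns these into relations between the ratio $\frac{4\pi^2(2g-2+n)V_{g,n}}{V_{g,n+1}}$ and the ratios $[\tau_d\tau_0^n]_{g,n+1}/V_{g,n+1}$.

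The first step is the zeroth order. Here I would invoke the known a priori estimates
\[
[\tau_{d_1}\ldots\tau_{d_n}]_{g,n}/V_{g,n}=1+\mathcal{O}(|\mathbf{d}|^2/g),\qquad V_{g-1,n+2}/V_{g,n}\to1,\qquad 4\pi^2(2g-2+n)V_{g,n}/V_{g,n+1}\to1.
\]
These are obtained from the recursion by monotonicity, using $a_l$ increasing to $1$, together with Lemma~\ref{lem:sumoveri}. In the sums, the separating terms $\mathcal{C}_{g,n}$ with both $g_i\geq1$ contribute $\mathcal{O}(1/g)$ relative to the main terms, because $\sum_{g_1}V_{g_1}V_{g-g_1}/V_g$ is dominated by the extremal genera.

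The second step is the induction on $s$. Assume expansions up to order $s$ for all three families: the ratio of intersection numbers to $V_{g,n}$, the ratio $(2g-2+n)V_{g,n}/V_{g,n+1}$, and the ratio $V_{g,n}/V_{g-1,n+2}$. Substitute these into Mirzakhani's recursion divided by $V_{g,n}$. Each term $a_l[\ldots]_{g,n-1}$ or $a_l[\ldots]_{g-1,n+1}$ becomes a sum over $l$ of ratios with known expansions up to order $s$, times volume ratios expanded to order $s$. Writing $a_l=1-\sum_{i\geq l}(a_{i+1}-a_i)$ and using Corollary~\ref{cor:polandai}, the sums over $l$ of polynomial coefficients converge, since $a_{l+1}-a_l$ decays like $2^{-2l}$. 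The truncation at $l\le d_0$ only costs exponentially small errors. This produces an equation in which the unknown $(s+1)$-th coefficient appears with a nonzero factor, because the leading term reproduces $1$ on both sides and the $1/g$ shift in the prefactor $(2g-2+n)$ isolates the next order. Solving gives $b_n^{(s+1)}$, and feeding into the dilaton/string identities gives $c_n^{(s+1)}$. The remainders stay $\mathcal{O}_{n,s}(g^{-s-2})$ uniformly, since the $n$-dependence is fixed while the error constants in lower $n'\le n+2s$ are finitely many.

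The main obstacle is controlling the separating sum $\mathcal{C}_{g,n}$ at every order. I would split it into boundary ranges $g_1\le s+1$ or $g_2\le s+1$, which are expanded term by term using the inductive hypothesis (each such term is $\mathcal{O}(g^{-g_1})$ relative), and a bulk range, which is bounded by $\mathcal{O}(g^{-s-2})$ via the crude estimate $V_{g_1,n_1}V_{g_2,n_2}\ll V_{g,n}\,g^{-\min(g_1,g_2)}$. That crude estimate itself follows from the log-convexity-type inequalities of the zeroth order step.
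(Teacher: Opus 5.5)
\textbf{There is a genuine gap.} Your overall architecture matches the Mirzakhani--Zograf scheme: a simultaneous induction on $s$ over the ratios $[\tau_{\mathbf d}]_{g,n}/V_{g,n}$ and the two volume ratios, fed by Mirzakhani's recursion and the Do--Norbury identities at $2\pi i$. The paper imports that scheme without proof and only recalls its three steps in Subsection~\ref{subsec:bound}. However, the step that is supposed to drive your induction fails as stated.

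You substitute the order-$s$ expansions into Mirzakhani's recursion itself. There the weights are $a_l$, and $l$ runs up to $d_0=3g-3+n-|\mathbf d|$. Writing $a_l=1-\sum_{i\ge l}(a_{i+1}-a_i)$ does not make these sums converge. The constant part $1$ leaves $\sum_{l\le d_0}[\tau_{d_1+d_j+l-1}\cdots]_{g,n-1}$ and $\sum_{l\le d_0}\sum_{k_1+k_2=l+d_1-2}[\tau_{k_1}\tau_{k_2}\cdots]_{g-1,n+1}$. These are sums of order $g$ (resp.\ $g^2$) intersection numbers with indices up to order $g$. In that range the inductive remainder $\mathcal{O}(|\mathbf d|^{2s+2}/g^{s+1})$ is not small. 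So the truncation at $l\le d_0$ is not an exponentially small effect for this part, and the equation in which ``the leading term reproduces $1$ on both sides'' is never actually obtained. The same problem affects your boundary ranges of $\mathcal C_{g,n}$, where $k_2$ also runs up to order $g$.

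The missing idea is the discrete derivative used in Section~\ref{sec:internb}. Form $[\tau_{d_1+1}\cdots]_{g,n}-[\tau_{d_1}\cdots]_{g,n}$: the constant part of $a_l$ then telescopes, and the weights become $a_{l-1}-a_l$ (with $a_{-1}=0$), which decay like $4^{-l}$. Only then can the expansions be inserted. Afterwards $[\tau_{\mathbf d}]_{g,n}/V_{g,n}$ is recovered by discrete summation, starting from $[\tau_0\cdots\tau_0]_{g,n}=V_{g,n}$ and using symmetry.

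Second, you do not identify a valid source for the new coefficients of the ratios. Your ``solving'' step rests on the undifferenced recursion above, so it is unavailable.

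The dilaton identity is fine, but it is a single relation per order. It gives $4\pi^2(2g-2+n)V_{g,n}/V_{g,n+1}$ directly as a sum of $[\tau_l\tau_0^n]_{g,n+1}/V_{g,n+1}$ with factorially decaying weights. So $b_n^{(s+1)}$ is simply read off once these ratios are known to order $s+1$; nothing is solved. The same relation cannot also fix $c_n^{(s+1)}$.

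The string identity does not help as you use it:
\begin{itemize}
\item At $\mathbf L=0$ its right-hand side vanishes, so it reads $\sum_l\frac{(-1)^l\pi^{2l}}{(2l+1)!}[\tau_l\tau_0^n]_{g,n+1}=0$. It does not express $V_{g,n}$ as an alternating sum of $[\tau_l\tau_0^n]_{g,n+1}$; to obtain $V_{g,n}$ you must differentiate twice in another $L_k$, which brings in $[\tau_1\tau_l\tau_0^{n-1}]_{g,n+1}$.
\item If you insert the differenced recursion into this relation, the leading $\mathcal B$-contribution is annihilated. That contribution is, at leading order, the only place where a genus-changing volume ratio enters. The cancellation is forced: for $n=0$ there is no $\mathcal A$-term to balance it, and $\mathcal C$ is of lower order.
\end{itemize}
So this relation carries no information on $c_n^{(s+1)}$. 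You still need to exhibit a relation in which $V_{g-1,n+2}/V_{g,n}$ enters with a non-vanishing coefficient at the relevant order. Without it, your induction does not close for the second ratio.
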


The above theorem allows us to compare the volumes of moduli spaces of punctured hyperbolic surfaces. The following estimate is a direct corollary.

\begin{cor}\label{cor:ratio} For $S_{g,n}$, $S_{g',n'}$ two hyperbolic surfaces, the ratio of the volumes of their moduli spaces admits the following asymptotic expression:
\begin{equation}\label{equ:ratio}
    \frac{V_{g',n'}}{V_{g,n}}=\frac{1}{(8\pi^{2}g)^{\chi(S_{g',n'})-\chi(S_{g,n})}}+\mathcal{O}_{n,n'}\left(\frac{1}{g^{\chi(S_{g',n'})-\chi(S_{g,n})+1}}\right).
\end{equation}
\end{cor}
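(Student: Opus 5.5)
The plan is to derive Corollary~\ref{cor:ratio} from Theorem~\ref{thm:expratios} by telescoping, with an induction on the complexity of the pair $(g',n')$. Both ratios in Theorem~\ref{thm:expratios} are one-step moves. \eqref{equ:n/n+1} adds a puncture, which decreases $\chi$ by $1$. \eqref{equ:g-1/g} replaces $(g,n)$ by $(g-1,n+2)$, which leaves $\chi$ unchanged. Any $(g',n')$ with $g'=g+O(1)$, which is the regime where the statement is meaningful since $n,n'$ are fixed, can be reached from $(g,n)$ by a bounded number of such moves. The number of moves depends only on $n,n'$ and $|g-g'|$.

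First, I take $s=1$ in \eqref{equ:n/n+1} and use $2g-2+n=2g(1+O_n(1/g))$. This gives
\[
\frac{V_{g,n+1}}{V_{g,n}}=8\pi^2 g\,\bigl(1+O_n(1/g)\bigr).
\]
Similarly, \eqref{equ:g-1/g} gives $V_{g-1,n+2}/V_{g,n}=1+O_n(1/g)$. Replacing $g$ by $g+O(1)$ in these expressions only changes them by factors $1+O(1/g)$.

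Next, I write $V_{g',n'}/V_{g,n}$ as a product along a path. Trading genus for two punctures via \eqref{equ:g-1/g} goes from $(g,n)$ to $(g',\,n+2(g-g'))$ without changing $\chi$. I then add or remove punctures one at a time via \eqref{equ:n/n+1}, or via its inverse, until reaching $n'$. If $g'>g$, I use \eqref{equ:g-1/g} in reverse, which is legitimate as long as the intermediate puncture count stays nonnegative; otherwise I first add punctures. Each puncture-change step contributes a factor $(8\pi^2 g)^{\pm1}(1+O(1/g))$, and each genus step contributes $1+O(1/g)$. The net power of $8\pi^2 g$ counts the change in the number of punctures at fixed $\chi$, which is exactly $-(\chi(S_{g',n'})-\chi(S_{g,n}))$ under the sign convention of the statement. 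I also have to reconcile this sign carefully, since adding a puncture lowers $\chi$ but increases the volume by a factor of about $8\pi^2 g$. The finite product of $(1+O(1/g))$ factors is again $1+O(1/g)$, because the number of factors is bounded in terms of $n,n'$ and $|g-g'|$.

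The only real care needed is bookkeeping: I must make sure each intermediate $(g'',n'')$ satisfies $2g''-2+n''>0$ so that Theorem~\ref{thm:expratios} applies, and that the implied constants depend only on $n,n'$, treating $|g-g'|$ as bounded. Multiplying the main term $(8\pi^2g)^{-(\chi'-\chi)}$ by $1+O(1/g)$ then yields the error $O_{n,n'}\bigl(g^{-(\chi'-\chi)-1}\bigr)$ claimed in \eqref{equ:ratio}.
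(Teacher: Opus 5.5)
Your proposal is correct and follows the route the paper intends. The paper calls the estimate a direct corollary of Theorem~\ref{thm:expratios}, and in the proof of Claim~\ref{claim:C} it writes $V_{g-g_1,n-|I|}/V_{g,n}$ as exactly this kind of telescoping product of puncture steps and genus-trading steps. Your bookkeeping $n'-n-2(g-g')=-(\chi(S_{g',n'})-\chi(S_{g,n}))$ gives the correct sign. Your restriction to $|g-g'|$ bounded, with the implied constant depending on that bound as well as on $n,n'$, is a genuine hidden hypothesis of the statement rather than a convenience: if $|g-g'|$ grows with $g$, the relative error is no longer $O(1/g)$. It is also the only regime in which the paper uses the corollary.
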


The moduli spaces of two surfaces $S_{g,n}$ and $S_{g',n'}$ of equal Euler characteristic have asymptotically the same size. In high genus, Weil-Petersson volumes only depend on the Euler characteristic. Suppose $2g+n>2g'+n'$, thus $\chi(S_{g,n})<\chi(S_{g',n'})$ and the ratio tends to zero. In topological recursion, sub-surfaces $S_{g',n'}$ appears as we take off a pair of pants of $S_{g,n}$. Thus we are also interested by the ratio $$\frac{V_{g_1,n_1}\cdot V_{g_2,n_2}}{V_{g,n}},$$
which correspond to the third case of topological recursion, where the surface $S_{g,n}\setminus P$ is disconnected.

\begin{lem}[Mirzakhani-Zograf] \label{lem:sumratioMZ}
Fix $n_{1}, n_{2},s \geq 0.$ Then
    \begin{equation*}
        \sum_{\genfrac{}{}{0pt}{}{g_{1}+g_{2}=g}{2g_{i}+n_{i} \geq s,\; i=1,2}} \frac{V_{g_1,n_1} \cdot V_{g_{2},n_{2}}}{V_{g,n_1+n_2}}= \mathcal{O}_{n,s}\left(\frac{1}{g^{s}}\right).
    \end{equation*}
\end{lem}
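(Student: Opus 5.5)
The plan is to reduce the sum to single-term bounds from Corollary~\ref{cor:ratio}, together with a crude but uniform bound on $V_{g_1,n_1}V_{g_2,n_2}/V_{g,n_1+n_2}$ when both $g_1$ and $g_2$ are large. By symmetry we may restrict to $g_1\le g_2$, so that $g_2\ge g/2$. Set $n=n_1+n_2$. We then split the range of $g_1$ into two parts: a bounded part $g_1\le K$, with $K$ a constant depending only on $n$ and $s$, and a bulk part $K<g_1\le g/2$.

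\emph{Bounded part.} For each fixed $g_1\le K$ with $2g_1+n_1\ge s$, apply Corollary~\ref{cor:ratio} to the pair $V_{g_2,n_2}/V_{g,n}$, where $g_2=g-g_1$. Here $\chi(S_{g_2,n_2})-\chi(S_{g,n})=2g_1+n_1$. This gives
\[
\frac{V_{g_2,n_2}}{V_{g,n}}=\mathcal{O}_{n,g_1}\!\left(g^{-(2g_1+n_1)}\right).
\]
Since $V_{g_1,n_1}$ is a fixed constant, each term is $\mathcal{O}(g^{-(2g_1+n_1)})\le \mathcal{O}(g^{-s})$. There are finitely many such $g_1$, so the bounded part is $\mathcal{O}_{n,s}(g^{-s})$. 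The constraint $2g_2+n_2\ge s$ holds automatically for large $g$.

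\emph{Bulk part.} This is the main obstacle: Corollary~\ref{cor:ratio} is not uniform in $g_1$, so we need an explicit monotonicity estimate. I would iterate Theorem~\ref{thm:expratios}, using only its first-order uniform consequences in the form of two-sided bounds:
\[
V_{g,n+1}\asymp (2g-2+n)V_{g,n}, \qquad V_{g-1,n+2}\asymp V_{g,n}.
\]
These are valid uniformly for $g$ large and $n$ bounded. Iterating them expresses $V_{g_1,n_1}$ and $V_{g_2,n_2}$ through $V_{g_1+\lceil n_1/2\rceil,0}$-type volumes, and we then need a multiplicative growth estimate of the form
\[
V_{h+1,0}\ge c\,h^{2}V_{h,0}.
\]
This follows from the two ratios composed: passing from genus $h$ to genus $h+1$ is $V_{h+1,0}\asymp V_{h,2}\asymp h^2 V_{h,0}$. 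A well-known form of the conclusion is the estimate $V_{g_1,n_1}V_{g_2,n_2}/V_{g,n}\le C^{g_1}\,g_1!^2 g_2!^2/g!^2$ up to polynomial factors. However, the constants $c$ in the iteration compound over $g_1$ steps. To control them, I would instead use the ratio with errors $1+\mathcal O(1/h)$, whose product over $h$ gives a factor of at most polynomial size. Plain constants would produce $C^{g_1}$, which is still harmless. Indeed, the resulting sum
\[
\sum_{K<g_1\le g/2} C^{g_1}\binom{g}{g_1}^{-2}\mathrm{poly}(g)
\]
is dominated by its first term, of order $C^K g^{-2K}\mathrm{poly}(g)$. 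This is $\mathcal{O}(g^{-s})$ once $2K$ exceeds $s$ plus the polynomial degree, because the ratio of consecutive terms is $C(g_1+1)^2/(g-g_1)^2\le 1/2$ for $g_1\le g/(2\sqrt{C}+2)$. In the remaining middle range, the binomial coefficient is exponentially large, so those terms are negligible.

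Combining the two parts gives the claim. The delicate point is making the polynomial factors independent of $g_1$ and checking that the geometric decay really holds across the whole range $g_1\le g/2$. I expect that bookkeeping, rather than any conceptual step, to take the most care.
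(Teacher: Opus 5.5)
The paper gives no proof of this lemma; it is quoted from \cite{MZ}. Your route handles the finitely many small $g_1$ with Corollary~\ref{cor:ratio}, then compares $V_{g_1,n_1}V_{g_2,n_2}/V_{g,n}$ uniformly with $\binom{g}{g_1}^{-2}$ in the bulk. This works, but only along the refined version you mention. One claim you make along the way is false, and your final summation is written in a form that relies on it.

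\textbf{The false claim.} You say that crude two-sided constants, which leave a factor $C^{g_1}$, are ``still harmless''. You also say that in the middle range the terms $C^{g_1}\binom{g}{g_1}^{-2}$ are negligible because the binomial coefficient is exponentially large. There, both $g_1\ln C$ and $2\ln\binom{g}{g_1}$ are linear in $g$, so the two genuinely compete. At $g_1\approx g/2$ the term is of order $g\,(\sqrt{C}/4)^{g}$, which does not tend to $0$ once $C\geq 16$. Bounds of the form $V_{g,n+1}\asymp(2g-2+n)V_{g,n}$ and $V_{g-1,n+2}\asymp V_{g,n}$ give no control on the size of $C$. The estimate you call well known, with a factor $C^{g_1}$, is therefore too weak to conclude unless $C<16$.

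\textbf{What actually makes the bulk work.} There is no exponential factor at all:
\begin{itemize}
\item Since $V_{h+1,0}/V_{h,0}=64\pi^4h^2\bigl(1+\mathcal{O}(1/h)\bigr)$, the powers of $64\pi^4$ in $V_{g_1,0}$ and in $V_{g,0}/V_{g_2,0}$ cancel up to a constant.
\item The powers of $8\pi^2$ coming from the boundary components cancel exactly.
\item The products of the factors $\bigl(1+\mathcal{O}(1/h)\bigr)^{\pm1}$ are at most polynomial in $g$.
\end{itemize}
So $V_{g_1,n_1}V_{g_2,n_2}/V_{g,n}\le C'g^{A}\binom{g}{g_1}^{-2}$ uniformly for $K<g_1\le g/2$. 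Alternatively, you may use plain constants, provided they are taken only for $h\geq h_0$ and are close enough to the limit that their ratio is below $16$; the finitely many $h<h_0$ are absorbed into a constant.

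\textbf{Finishing the summation.} With this bound:
\begin{itemize}
\item for $g_1\le g/4$ the terms decay geometrically with ratio at most $1/4$;
\item for $g/4\le g_1\le g/2$ each term is at most $g^{A}\binom{g}{\lceil g/4\rceil}^{-2}$, and there are at most $g$ of them;
\item hence the bulk is $\mathcal{O}(g^{A-2K-2})$, and choosing $2K+2\geq s+A$ finishes.
\end{itemize}
So drop the remark about plain constants, and run the summation with only the polynomial factor.

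\textbf{A smaller point.} The sum is not symmetric under $(g_1,n_1)\leftrightarrow(g_2,n_2)$ when $n_1\neq n_2$. Instead of restricting ``by symmetry'', treat the half $g_2\le g/2$ by the same argument with the roles exchanged. This costs nothing.
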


Here one should think of $S_{g_1,n_1}$ and $S_{g_2,n_2}$ as the surfaces obtained by cutting $S_{g,n}$ along a closed non-separating geodesic and sum the ratios over the possibilities. Asymptotically, up to order $\mathcal{O}_{n,s}\left(\frac{1}{g^{s}}\right)$, one only sees the cases where the Euler characteristic of one of the separated surfaces is strictly bounded by $s$. 

\subsection{Previously known result on the expansion of intersection numbers}\label{subsec:expinternb}
Mirzakhani and Zograf established the existence of the asymptotic expansions for intersection numbers \cite{MZ}. 

\begin{thm}[Mirzakhani-Zograf] 
Given the integers $n, s\geq 1,$ and ${d}=(d_{1},\ldots, d_{n})$, there exist polynomials $Q_{n}^{{(s+1)}}(d_1,\ldots,d_n)$ of degree $2(s+1)$
and $q_{n}^{{(r)}}(d_1,\ldots,d_n)$ of degrees $2r$ for $r\leq s$ such that as $g\rightarrow\infty$, for any $d=(d_{1},\ldots,d_{n})$:
\begin{equation*}
\left|\frac{[\tau_{d_1} \ldots \tau_{d_n}]_{g,n}}{V_{g,n}}-1-\frac{e_{n}^{(1)}(\mathbf{d})}{g}-\ldots-\frac{e_{n}^{(s)}(\mathbf{d})}{g^{s}}\right| 
\leq \frac{Q_{n}^{(s+1)}(d_{1},\ldots,d_{n})}{g^{s+1}},
\end{equation*}
and
\begin{equation*}\label{thm:ineq2}
|e_{r}^{(s)}(\mathbf{d})| \leq q_{n}^{r}(d_{1},\ldots,d_{n}).
\end{equation*}
\end{thm}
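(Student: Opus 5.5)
This is Mirzakhani–Zograf's theorem, recalled from \cite{MZ}; to reprove it I would induct on $s$, feeding the topological recursion of Mirzakhani into the volume ratios of Theorem~\ref{thm:expratios}.

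\emph{Normalising the recursion.} Divide the recursion for $[\tau_{d_1}\ldots\tau_{d_n}]_{g,n}$ by $V_{g,n}$. Each term on the right is a normalised intersection number on a smaller surface, multiplied by a volume ratio:
\begin{itemize}
\item the $\mathcal{A}$ term carries $V_{g,n-1}/V_{g,n}$;
\item the $\mathcal{B}$ term carries $V_{g-1,n+1}/V_{g,n}$;
\item the $\mathcal{C}$ term carries $V_{g_1,|I|+1}V_{g_2,|J|+1}/V_{g,n}$.
\end{itemize}
By Theorem~\ref{thm:expratios} and Corollary~\ref{cor:ratio}, the first two ratios have expansions in $1/g$ to any order, with leading size $1/(8\pi^2 g)$. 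Lemma~\ref{lem:sumratioMZ} says that in $\mathcal{C}$ only splittings where one side has bounded Euler characteristic contribute up to $O(g^{-s-1})$. In those terms the small factor is a fixed intersection number, nonzero only for $k_1$ bounded by Proposition~\ref{prop:internbMZ}(3). The large factor is again handled by induction.

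\emph{Induction on $s$ (and on $2g+n$).} Assume the expansion to order $s-1$ for all smaller $(g',n')$, with polynomial coefficients and polynomial error bounds. Substitute it into the normalised recursion. The sums over $l$ weighted by $a_l$ are handled with the identity
\[
a_l=a_0+\sum_{i<l}(a_{i+1}-a_i).
\]
After summing by parts and using Corollary~\ref{cor:polandai}, every sum $\sum_l p(l,\mathbf d)\,a_l$ becomes a polynomial in $\mathbf d$ of controlled degree plus a rapidly decaying remainder. The degree rises by at most $2$ per power of $1/g$, which gives the degree $2r$ of $e_n^{(r)}$.

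\emph{Extracting the expansion.} Expand $1/g'$ for $g'=g-1$ in powers of $1/g$. Rearrange the identity into the form
\[
\frac{[\tau_{\mathbf d}]_{g,n}}{V_{g,n}}-\frac{[\tau_{\mathbf d'}]_{g,n}}{V_{g,n}}=\text{known terms},
\]
where $\mathbf d'$ is $\mathbf d$ with one index shifted. Solve this as a telescoping/difference equation in $d_1$, anchoring at $\mathbf d=0$, where the ratio equals $1$. This determines the coefficients $e_n^{(r)}$ as polynomials.

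\emph{Main obstacle.} The hard step is the uniformity of the error in $\mathbf d$: we need a polynomial bound $Q_n^{(s+1)}(\mathbf d)$ valid even for $|\mathbf d|$ comparable to $g$. I would first try to get it from a priori monotonicity bounds, namely that $[\tau_{\mathbf d}]_{g,n}/V_{g,n}$ lies in $[0,1]$ and is nonincreasing in each $d_i$, combined with a sufficiently large polynomial in the inductive step.
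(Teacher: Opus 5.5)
There is a genuine gap in your second paragraph. You substitute the inductive expansion directly into the \emph{undifferentiated} recursion and claim that, after summation by parts and Corollary~\ref{cor:polandai}, each $\sum_l p(l,\mathbf d)\,a_l$ becomes a polynomial in $\mathbf d$ plus a rapidly decaying remainder. This step fails.

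In the undifferentiated recursion the $l$-sums run over $0\le l\le d_0=3g-3+n-|\mathbf d|$, and the weights $a_l$ tend to $1$, not to $0$. These sums are also not dominated by bounded $l$. The summands of $\mathcal A$ and $\mathcal B$ are each $O_{\mathbf d}(1/g)$, being bounded by multiples of $V_{g,n-1}/V_{g,n}$ and $V_{g-1,n+1}/V_{g,n}$. So those with $l\le L$ contribute only $O_{\mathbf d}(L^2/g)$, and the same holds for $\mathcal C$. Yet the left side is $1+o(1)$. The identity is therefore carried by $l\gtrsim\sqrt g$, which is exactly where an expansion with error polynomial in the indices says nothing.

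Concretely, the inserted error $Q(\ldots,l)/g^{s}$, summed against non-decaying weights over $l\le d_0$, is not small. Even the main part $\sum_{l\le d_0}a_l\,p(l,\mathbf d)$ is a polynomial in $d_0$, hence in $g$. Writing $a_l=a_0+\sum_{i<l}(a_{i+1}-a_i)$ does not help: it leaves global tail sums such as $\sum_{m\ge d_1+d_j-1}[\tau_m\prod_{i\ne 1,j}\tau_{d_i}]_{g,n-1}$, and the induction hypothesis says nothing about these.

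The missing idea is Mirzakhani--Zograf's, and it is the one followed in Section~\ref{sec:internb}: apply the discrete derivative $\delta_1$ to the recursion \emph{before} inserting any expansion. Because $d_0$ drops by one when $d_1$ increases by one, those global sums telescope. The weights become $a_{i-1}-a_i$ (with $a_{-1}=0$), which decay exponentially. The $i$-sums are then effectively finite, the inductive expansion can be inserted, and Lemma~\ref{lem:sumoveri} with Corollary~\ref{cor:polandai} turns them into polynomials. This yields an expansion of $\delta_1[\tau_{\mathbf d}]_{g,n}/V_{g,n}$ starting at order $1/g$, whose $(s+1)$-st coefficient has degree $2s+1$. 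The extra degree comes from the factor $2d_j+1$ in $\mathcal A$, or from the convolution over $k_1+k_2$ in $\mathcal B$. Discrete integration in each variable, using symmetry and $[\tau_0\cdots\tau_0]_{g,n}=V_{g,n}$, then gives degree $2s+2$.

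Your third paragraph contains this telescoping, but only as a way to read off coefficients after an extraction that cannot be carried out. Note also that a single instance of the recursion contains no $[\tau_{\mathbf d'}]_{g,n}$. The difference form you want arises only by subtracting two instances, and that subtraction is precisely what makes the weights decay.

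Conversely, the uniformity you single out as the main obstacle is cheap. Once $\deg e_n^{(r)}\le 2r$ is known, the bound for $|\mathbf d|^2\ge g$ follows from $0\le[\tau_{\mathbf d}]_{g,n}/V_{g,n}\le 1$ alone, taking $Q_n^{(s+1)}=C(1+|\mathbf d|)^{2s+2}$. In the differenced recursion, the tails in $i$ are handled by the same trivial bound against the exponential decay of $a_{i-1}-a_i$. No monotonicity is needed.
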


\subsection{Previously known result on the expansion of the volumes}\label{subsec:expvol}
Anantharaman and Monk proved the expansion of the volumes has the following form \cite{AM22}: 

\begin{thm}[Anantharaman-Monk]

Let $n \geq 1$ be an integer.  There exists a unique family
$(f_n^{(r)})_{r \geq 0}$ of functions such that
  for any integer $s \geq 0$, any genus $g \geq 1$ and any length
  vector $\mathbf{x} \in \mathbb{R}_{\geq 0}^n$,
  \begin{equation}
    \frac{V_{g,n}(\mathbf{x})}{V_{g,n}}
    = \sum_{r=0}^s \frac{f_{n}^{(r)}(\mathbf{x})}{g^r}
    + \mathcal{O}_{n,s}\left(\frac{{|\mathbf{x}|}^{3s+1}}{g^{s+1}} \exp \frac{|\mathbf{x}|}{2}\right).
  \end{equation}
  Furthermore, for any $r \geq 0$, the function $f_n^{(r)}$ can be expressed as
  \begin{equation}
    \label{eq:fn_sinh}
    f_n^{(r)}(\mathbf{x})
    = \sum_{J_+ \sqcup J_- \subseteq \{1, \ldots, n\}}
    Q_n^{(r,J_\pm)}(\mathbf{x}) \prod_{i \in J_+} \cosh\frac{x_i}{2}
    \prod_{i \in J_-} \frac{2}{x_i}\sinh \frac{x_i}{2},
  \end{equation}
  where $Q_n^{(r,J_\pm)}$ are uniquely defined even $n$-variable polynomial functions.
  
  Furthermore, there exists constants $D_{n,r}, A_r \geq 0$ such that the polynomial function $Q_{n}^{(r,J_\pm)}$ can
  be expressed as a polynomial of degree $\leq D_{n,r}$, and its coefficients can be written as linear combinations
  (independent of $g$) of the $[\tau_{d_1}\ldots\tau_{d_n}]/V_{g,n}$ for $\mathbf{d}$ such that
  $|{\mathbf{d}}|_\infty \leq A_r$.

\end{thm}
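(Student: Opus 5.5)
The plan is to start from Mirzakhani's formula for the coefficients of $V_{g,n}(\mathbf{x})$ and substitute the Mirzakhani--Zograf expansion of the normalized intersection numbers term by term. Dividing by $V_{g,n}$, I will write
$$\frac{V_{g,n}(\mathbf{x})}{V_{g,n}}=\sum_{|\mathbf{d}|\leq 3g-3+n}\frac{[\tau_{d_1}\ldots\tau_{d_n}]_{g,n}}{V_{g,n}}\prod_{i=1}^n w_{d_i}(x_i),$$
where $w_d(x)$ is the explicit weight coming from the normalization of $[\tau_{\mathbf{d}}]_{g,n}$. With the normalization intended, $w_d(x)=x^{2d}/(2^{2d}(2d+1)!)$, so that $\sum_d w_d(x)=\frac{2}{x}\sinh\frac{x}{2}$. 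I will fix the exact form of $w_d$ at the start, since the whole computation rests on this generating function. I then insert
$$\frac{[\tau_{\mathbf{d}}]_{g,n}}{V_{g,n}}=1+\sum_{r=1}^{s}\frac{e_n^{(r)}(\mathbf{d})}{g^r}+R_{g,s}(\mathbf{d}),\qquad |R_{g,s}(\mathbf{d})|\leq \frac{Q_n^{(s+1)}(\mathbf{d})}{g^{s+1}},$$
from the theorem of Subsection~\ref{subsec:expinternb}.

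The second step is to identify each main term. For a monomial $d_1^{k_1}\cdots d_n^{k_n}$, the sum factorizes as $\prod_i\sum_{d}d^{k_i}w_d(x_i)$. Since $d\,w_d(x)=\tfrac{x}{2}\partial_x w_d(x)$, each one-variable factor is $(\tfrac{x}{2}\partial_x)^{k_i}$ applied to $\frac{2}{x}\sinh\frac{x}{2}$. By induction on $k$, this equals $P(x)\cosh\frac{x}{2}+\tilde P(x)\frac{2}{x}\sinh\frac{x}{2}$ with even polynomials $P,\tilde P$ of degree at most $2k$. Expanding the product over $i$ and collecting terms gives $f_n^{(r)}$ in the form \eqref{eq:fn_sinh}. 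The degree bound $D_{n,r}$ is twice the degree of $e_n^{(r)}$.

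Two corrections remain. First, the sum over $\mathbf{d}$ is truncated at $|\mathbf{d}|\leq 3g-3+n$, while the series above run over all $\mathbf{d}$. The tail is bounded by $\sum_{|\mathbf{d}|>3g}|\mathbf{d}|^{2s}\prod_i w_{d_i}(x_i)$. This is at most $|\mathbf{x}|^{?}g^{-s-1}e^{|\mathbf{x}|/2}$: when $|\mathbf{x}|\ll g$ the factorial decay is super-exponential, and otherwise I absorb $|\mathbf{d}|^{2s}\lesssim |\mathbf{x}|^{2s}\cdot(\ldots)$ after comparing with $g$. Second, the remainder $R_{g,s}$ is handled by the same differential-operator computation applied to $Q_n^{(s+1)}$. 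This yields $\mathcal{O}((1+|\mathbf{x}|)^{\deg Q}\,e^{|\mathbf{x}|/2})/g^{s+1}$, and I use the bound $\frac{2}{x}\sinh\frac{x}{2}\leq \cosh\frac{x}{2}\leq e^{x/2}$.

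Uniqueness holds because functions of the form polynomial $\times\prod\cosh\prod\frac{2}{x}\sinh$ are linearly independent over polynomials; this follows from their distinct exponential growth in each variable separately. Consequently two expansions valid for all $g$ must agree order by order. For the statement about coefficients being linear combinations of finitely many $[\tau_{\mathbf{d}}]/V_{g,n}$ with $|\mathbf{d}|_\infty\leq A_r$, I will use Lagrange interpolation: $e_n^{(r)}$ is a polynomial of bounded degree, so it is determined by its values on a finite grid. Those values are in turn expressed through the ratios, using the recursion of Theorem~\ref{thm:expratios}.

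I expect the main obstacle to be the error exponent $3s+1$. It requires a precise degree bound on the Mirzakhani--Zograf remainder polynomial $Q_n^{(s+1)}$, which is not given explicitly above, together with a careful handling of the truncation tail. The first thing I would try is an induction along Mirzakhani's recursion that tracks degrees, using Corollary~\ref{cor:polandai} to control how the degree in $\mathbf{d}$ grows at each step.
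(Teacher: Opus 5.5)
Your route is the one the paper uses for its refinement, Theorem~\ref{thm:expvol}: insert the expansion of $[\tau_{\mathbf d}]_{g,n}/V_{g,n}$ into Mirzakhani's coefficient formula and resum against $w_d(x)=x^{2d}/(2^{2d}(2d+1)!)$. Your operator $\tfrac{x}{2}\partial_x$ does the job of the basis $\mathrm{p}_k$ of Lemma~\ref{lemm:sum}. The paper itself only cites the present statement.

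The genuine gap is that you treat $e_n^{(r)}$ as a polynomial in $\mathbf{d}$, both when you expand it in monomials and when you interpolate it on a grid. The Mirzakhani--Zograf result quoted in Subsection~\ref{subsec:expinternb} only bounds $|e_n^{(r)}|$ by a polynomial. For $r\geq 2$ the coefficient is in general not polynomial. The separating term of the recursion (Lemma~\ref{lem:sumpartition}) contributes products $[\tau_{k_1}\prod_{i\in I}\tau_{d_i}]_{g_1,|I|+1}\cdot(\text{polynomial in } k_2,\mathbf{d}_J)$ with $g_1,|I|$ bounded. The first factor vanishes as soon as $k_1+|\mathbf{d}_I|>3g_1-2+|I|$. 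This is why Theorem~\ref{thm:internb} writes $e_n^{(r)}=q_n^{(r)}+\sum_{J}q_{n,J}^{(r)}$ with $J$-partial polynomials. It is also why the statement has $J_+\sqcup J_-\subseteq\{1,\ldots,n\}$ rather than $=$. A piece $c(\mathbf{d}_I)p(\mathbf{d}_J)$ with $c$ finitely supported resums to $\bigl(\sum_{\mathbf{d}_I}c(\mathbf{d}_I)\prod_{i\in I}w_{d_i}(x_i)\bigr)$, an even polynomial in $\mathbf{x}_I$ with no $\cosh$ or $\sinh$, times a $\cosh$/$\tfrac{2}{x}\sinh$ expression in $\mathbf{x}_J$. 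Your monomial computation only ever produces terms with $J_+\sqcup J_-=\{1,\ldots,n\}$, so it does not compute the actual $f_n^{(r)}$. What is missing is the structural description of $e_n^{(r)}$ coming from the analysis of the third term of the recursion. With it, your resummation goes through piece by piece. The finite-box claim for $A_r$ then follows from the bounds on supports and degrees (these functions form a finite-dimensional space determined by values on a large enough box), not from Lagrange interpolation.

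Conversely, the point you flag as the main obstacle is not one. The quoted statement gives $\deg Q_n^{(s+1)}=2(s+1)$. So the remainder contributes $O\bigl((1+|\mathbf{x}|)^{2s+2}e^{|\mathbf{x}|/2}/g^{s+1}\bigr)$, and one power less if you keep the factor $(1+x_i)^{-1}$ hidden in $\tfrac{2}{x_i}\sinh\tfrac{x_i}{2}$; you need this when $s=0$. For the truncation tail, use $1\leq(|\mathbf{d}|/(3g-3+n))^{s+1}$ on $\{|\mathbf{d}|>3g-3+n\}$ together with $|1+\sum_{r\leq s}e_n^{(r)}(\mathbf{d})/g^r|\lesssim(1+|\mathbf{d}|)^{2s}$. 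This gives at most $(1+|\mathbf{x}|)^{3s+1}e^{|\mathbf{x}|/2}/g^{s+1}$. For $s\geq 1$ the bound must be read with $1+|\mathbf{x}|$, since near $\mathbf{x}=0$ the error is in general of order $|\mathbf{x}|^2/g^{s+1}$.

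Two smaller corrections. First, $\cosh\tfrac{x}{2}$ and $\tfrac{2}{x}\sinh\tfrac{x}{2}$ have the same exponential growth, so uniqueness should come from the linear independence of $1,e^{x/2},e^{-x/2}$ over rational functions in each variable. Second, the coefficients of $Q_n^{(r,J_\pm)}$ do not depend on $g$. They are therefore linear combinations of the values $e_n^{(r)}(\mathbf{d})$, not of the ratios $[\tau_{\mathbf{d}}]/V_{g,n}$ themselves, and Theorem~\ref{thm:expratios} plays no role in that step.
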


Our approach precises the value of $D_{n,r}$ and $A_r$. Moreover, it yields the optimal degree for the polynomial in the error term, $2s+2$ instead of $3s+1$.

\subsection{Counting simple closed geodesics}\label{subsec:counting}

Wu and Xue proved in \cite{WX} Lipnowski-Wright conjecture. At the length scale $\sqrt{g}$, one observes a change in the behavior of geodesics. Among geodesics whose length is negligible compared to $\sqrt{g}$, simple non-separating ones dominate, whereas non-simple geodesics become prevalent once a larger window is considered. Their result holds with high probability, where the probability measure is the renormalized Weil-Petersson volume, which we denote $\mathbb{P}_{WP}$. Wu and Xue's result reads: 

\begin{thm}[Wu-Xue]\label{thm:WX}
The following two probabilities hold:
\begin{enumerate}
\item if $L(g)$ satisfies that for some fixed $\epsilon_0>0$, $$L(g)\geq (1-\epsilon_0)\ln g \text{ and }\lim \limits_{g\to \infty}\frac{L^2(g)}{g}=0,$$ then there exists a function $\delta(g)>0$ satisfying $\lim \limits_{g\to \infty}\delta(g)=0$ such that
$$\lim \limits_{g\to \infty}\mathbb{P}_{WP}\left(X_g\in\mathcal{M}_g;\ \left|1-\frac{N_{nsep}^s(X_g,L(g))}{N(X_g,L(g))} \right|<\delta(g)\right)=1.$$

\item if $L(g)$ satisfies $$\lim \limits_{g\to \infty}\frac{L^2(g)}{g}=\infty,$$ then
$$\lim \limits_{g\to \infty}\mathbb{P}_{WP}\left(X_g\in\mathcal{M}_g; \left|1-\frac{N^{ns}(X_g,L(g))}{N(X_g,L(g))} \right|<\frac{g}{L(g)^2}\right)=1.$$
\end{enumerate}
where
\begin{itemize}
    \item $N(X,L):=\#\left\{\gamma\text{ closed geodesic on }X\text{ such that }l_X(\gamma)\leq L\right\}$
    \item $N^{ns}(X,L):=\#\left\{\gamma\text{ closed non-simple geodesic on }X\text{ such that }l_X(\gamma)\leq L\right\}$
    \item $N^s_{nsep}(X,L):=\#\left\{\begin{array}{c}
          \gamma\text{ closed simple non-separating}\\
          \text{geodesic on }X\text{ such that } l_X(\gamma)\leq L
    \end{array}\right\}$.
\end{itemize}
\end{thm}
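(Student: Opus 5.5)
This is the Wu–Xue theorem; the paper quotes it from \cite{WX}. The plan is a first-moment (and for (1) a second-moment) computation via Mirzakhani's integration formula, combined with the volume estimates of Subsection~\ref{subsec:expvol}.

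\textbf{Part (1).} Write
\[
N(X,L)=N^s_{nsep}(X,L)+N^s_{sep}(X,L)+N^{ns}(X,L).
\]
It suffices to show three things:
\begin{enumerate}
\item $N^s_{nsep}(X,L)$ concentrates around its mean $\mathbb{E}_{WP}(N^s_{nsep})\asymp e^L/L$;
\item $\mathbb{E}_{WP}(N^s_{sep}+N^{ns})=o\bigl(e^L/L\bigr)$ when $L^2/g\to 0$;
\item Markov's inequality then finishes.
\end{enumerate}

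For the mean, Mirzakhani's integration formula gives
\[
\mathbb{E}_{WP}(N^s_{nsep}(X,L))=\frac{1}{2V_g}\int_0^L V_{g-1,2}(x,x)\,x\,dx .
\]
By the $r=0$ term of the volume expansion (Anantharaman–Monk) together with Corollary~\ref{cor:ratio}, we have
\[
\frac{V_{g-1,2}(x,x)}{V_g}=\frac{4\sinh^2(x/2)}{x^2}\Bigl(1+\mathcal{O}\bigl((1+x)^2/g\bigr)\Bigr),
\]
uniformly in $x\le L=o(\sqrt g)$. Integrating gives $\mathbb{E}_{WP}(N^s_{nsep}(X,L))\sim e^L/(2L)$.

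For the separating simple curves, the analogous integral has integrand
\[
\sum_{g_1+g_2=g} V_{g_1,1}(x)\,V_{g_2,1}(x)\,x/V_g .
\]
Lemma~\ref{lem:sumratioMZ} and the bound $V_{g,1}(x)\lesssim V_{g,1}\sinh(x/2)/x$ make this $\mathcal{O}(e^{L}/(gL))$, which is negligible.

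For the concentration, I would compute the second moment through pairs of disjoint simple non-separating curves. This uses $V_{g-2,4}(x,x,y,y)$ and $V_{g-1,2}\cdot V_{\cdot}$-type terms, and should give $\mathbb{E}_{WP}\bigl((N^s_{nsep})^2\bigr)=\mathbb{E}_{WP}(N^s_{nsep})^2\,(1+o(1))+\mathbb{E}_{WP}(N^s_{nsep})$. Since $L\ge(1-\epsilon_0)\ln g$ forces $e^L/L\to\infty$, Chebyshev's inequality applies.

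\textbf{Non-simple geodesics.} I expect this to be the main obstacle. A non-simple geodesic $\gamma$ of length $\le L$ fills a subsurface $S'$ with $|\chi(S')|\ge 1$ and total boundary length $\le 2L$. I would first try the following scheme:
\begin{itemize}
\item bound the number of filling geodesics of length $\le L$ in a surface of fixed type with boundary length $\ell$ by roughly $e^{L-\ell/2}$ times polynomial factors;
\item integrate over the moduli of $S'$ using Mirzakhani's formula, noting that each unit of $|\chi(S')|$ costs a factor $1/g$ via Corollary~\ref{cor:ratio} but gains at most a factor $L^2$ from the boundary-length integrals;
\item sum over $|\chi(S')|$, which is bounded by $\mathcal{O}(L)$ because of the length constraint.
\end{itemize}
This should yield $\mathbb{E}_{WP}(N^{ns}(X,L))\lesssim (L^2/g)\,e^L/L$, which is $o(e^L/L)$ exactly in the regime $L^2/g\to 0$. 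The delicate points are the uniform counting bound for filling geodesics and controlling the combinatorics of the subsurfaces of growing complexity.

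\textbf{Part (2).} Here the roles reverse. I need a lower bound $N(X,L)\ge c\,e^L/L$ with high probability. For $L\gg\sqrt g\gg\ln g$ this should follow from a prime geodesic theorem with error controlled by a uniform spectral gap and injectivity radius bounds, both of which hold with high probability. On the other side, I need an upper bound on the expected number of simple geodesics. The same Mirzakhani-integral estimates, used only as upper bounds valid for all $x\le L$, should give a bound of order $\frac{g}{L^2}\cdot\frac{e^L}{L}$ times a small factor. Markov's inequality then shows that the simple proportion is below $g/L^2$ with probability tending to $1$, which yields the claim for $N^{ns}=N-N^s$.
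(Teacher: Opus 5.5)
Your overall architecture is sensible: Mirzakhani's integration formula, first moments, Markov, and a prime-geodesic lower bound for $N$ in part (2). However, the estimate that carries part (2) is only asserted, and it cannot come from the tools you name.

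For $L\gg\sqrt g$ you need $\mathbb{E}_{WP}(N^s(X,L))=o\bigl(\frac{g}{L^2}\cdot\frac{e^L}{L}\bigr)$. None of the available estimates gives this:
\begin{itemize}
\item The uniform bound $V_{g-1,2}(x,x)\le V_{g-1,2}\bigl(\sinh(x/2)/(x/2)\bigr)^2$ only yields $\mathbb{E}_{WP}(N^s_{nsep}(X,L))\lesssim e^L/L$, with no gain at all.
\item The asymptotic expansions (Anantharaman--Monk, or Theorem~\ref{thm:expvol} at any fixed order $s$) have error terms of size $(1+x)^{2s+2}e^{x}/g^{s+1}$. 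Once $x\gtrsim\sqrt g$ these are comparable to or larger than the main term, which is of order $e^{x}$.
\item The Gaussian factor $e^{-x^2/(8\pi^2 g)}$ of Theorem~\ref{thm:asymsqrt} is only established for $x=\mathcal{O}(\sqrt g)$.
\end{itemize}
What is missing is a new upper bound, valid for all $x$ (including $x\gg g$, since $L$ is otherwise unrestricted). It must show that $V_{g-1,2}(x,x)/V_g$, and likewise $\sum_{g_1}V_{g_1,1}(x)V_{g-g_1,1}(x)/V_g$ for separating curves, lies below $4\sinh^2(x/2)/x^2$ by a factor $o(g/x^2)$ when $x^2/g\to\infty$. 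Equivalently, you need a uniform decay of normalized intersection numbers such as $[\tau_{d_1}\tau_{d_2}]_{g-1,2}/V_{g-1,2}$ for $|\mathbf{d}|\gg\sqrt g$. This is exactly the ``new estimates over intersection numbers'' on which, as the paper points out, Wu--Xue's proof of (2) rests. Your sentence ``the same Mirzakhani-integral estimates, used only as upper bounds'' therefore hides the whole content of part (2).

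Part (1) has two further holes.
\begin{itemize}
\item $(N^s_{nsep})^2$ counts all ordered pairs of simple non-separating geodesics of length $\le L$. Besides the diagonal and the disjoint pairs (handled by $V_{g-2,4}(x,x,y,y)$ and $V_{g_1,2}V_{g_2,2}$), it contains intersecting pairs. These are not multicurves, so the integration formula does not apply to them. They fill a subsurface, and can only be controlled by the same filling-geodesic counting that you yourself call the main obstacle.
\item That counting is left open. Known bounds for filling geodesics have constants depending on the topology of the filled subsurface. Summing over subsurfaces of complexity up to $\mathcal{O}(L)$, with $L$ as large as $o(\sqrt g)$, requires quantitative control of this dependence, which you do not provide.
\end{itemize}
The route indicated in the paper sidesteps both problems. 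It compares $N^s_{nsep}$ with the total count $N$, which is governed by the prime geodesic theorem, and uses $N^s_{nsep}\le N$. Then your (correct) first-moment asymptotics $\mathbb{E}_{WP}(N^s_{nsep}(X,L))\sim e^L/(2L)$, plus Markov's inequality for the nonnegative quantity $N-N^s_{nsep}$, do the job. No count of non-simple geodesics and no second moment are needed. What this route requires instead is control of $N(X,L)$ uniform in the genus in the relevant range of $L$.
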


To obtain a result over the ratio of counting functions, the first step is to compare their expected values. The asymptotic value of the number of closed geodesic is deterministic. By the prime geodesic theorem of Huber \cite{huber}, for any $X\in\mathcal{M}_{g,n}$:
$$N(X,L)\sim\frac{e^L}{2L}.$$

The work of Mirzakhani and Petri essentially shows that up to a certain $L$, the expected number of simple non-separating geodesics on a typical surface has the same asymptotic \cite{MP19}. It is computed through Mirzakhani's integration formula for simple non-separating geodesics \cite{M7b}: 

\begin{thm}[Mirzakhani's Integration Formula]\label{thm:integr}
Let $\gamma\subset S_{g,n}$ be a simple non-separating closed geodesic, $\mathcal{O}_\gamma$ its orbit for the action of the mapping class group. For $F:\mathbb{R}_{\geq 0}\rightarrow\mathbb{R}$, one defines 
$$\begin{array}{cccc}
     F^{\gamma}:&\mathcal{M}_{g,n}&\rightarrow&\mathbb{R} \\
     & X&\mapsto&\displaystyle\sum_{\alpha\in\mathcal{O}^\gamma}F(l_\alpha(X)).
\end{array}$$
Then the integral of $F^\gamma$ over $\mathcal{M}_{g}$ with respect to Weil-Petersson metric is given by: 
$$\int_{\mathcal{M}_{g}}F^\gamma\mathrm{dVol}_{WP}(X)=\frac{1}{2}\int_{\mathbb{R}_{\geq 0}}F(x)V_{g-1,n+2}(x,x)x\mathrm{d}x.$$
\end{thm}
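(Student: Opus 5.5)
The plan is to prove the formula by \emph{unfolding} the sum over the mapping class group orbit, in the standard way. I work on $\mathcal{M}_{g,n}$; the case $n=0$ is the one written in the statement. Let $\mathrm{Stab}(\gamma)\subset\mathrm{Mod}_{g,n}$ be the stabilizer of the isotopy class of $\gamma$, and consider the intermediate cover
$$\mathcal{M}_{g,n}^{\gamma}=\mathcal{T}_{g,n}/\mathrm{Stab}(\gamma)=\{(X,\alpha)\,:\,X\in\mathcal{M}_{g,n},\ \alpha\in\mathcal{O}_\gamma\},$$
with the covering map $\pi:(X,\alpha)\mapsto X$. On $\mathcal{M}_{g,n}^{\gamma}$ the function $(X,\alpha)\mapsto F(l_\alpha(X))$ is well defined. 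Its pushforward under $\pi$, summing over the fiber, is exactly $F^\gamma$. Since $\omega_{WP}$ is $\mathrm{Mod}_{g,n}$-invariant, the volume form descends to both spaces and $\pi$ is a local isometry. Hence
$$\int_{\mathcal{M}_{g,n}}F^\gamma\,\mathrm{dVol}_{WP}=\int_{\mathcal{M}^{\gamma}_{g,n}}F(l_\gamma)\,\mathrm{dVol}_{WP}.$$
For $F\ge 0$ this is justified by monotone convergence, and for general $F$ by splitting into positive and negative parts.

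Next I compute the right-hand side in Fenchel--Nielsen coordinates. I choose a pants decomposition $\mathcal{P}$ containing $\gamma=\alpha_1$, so that $(l_1,\tau_1)$ are the length and twist along $\gamma$. By Wolpert's formula,
$$\omega_{WP}=\mathrm{d}l_1\wedge\mathrm{d}\tau_1+\omega',$$
where $\omega'$ is the Weil--Petersson form in the remaining coordinates, which are Fenchel--Nielsen coordinates on $\mathcal{T}_{g-1,n+2}(l_1,l_1)$ for the surface $S_{g,n}\setminus\gamma$. The stabilizer has a concrete structure:
\begin{itemize}
\item the Dehn twist $D_\gamma$ along $\gamma$ acts by $\tau_1\mapsto\tau_1+l_1$;
\item the mapping classes supported on $S_{g,n}\setminus\gamma$ form a copy of $\mathrm{Mod}_{g-1,n+2}$ fixing both new boundaries;
\item one extra class of elements exchanges the two sides of $\gamma$.
\end{itemize}
Quotienting first by $\langle D_\gamma\rangle$ and $\mathrm{Mod}_{g-1,n+2}$, the level set $\{l_\gamma=x\}$ becomes a bundle whose fiber is $\mathcal{M}_{g-1,n+2}(x,x)$ and whose twist circle is $[0,x)$. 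Fubini then gives
$$\int F(x)\,x\,V_{g-1,n+2}(x,x)\,\mathrm{d}x.$$
The remaining index-two quotient by the side-exchanging element contributes the factor $\frac12$.

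I expect the main obstacle to be the careful bookkeeping of this index-two quotient and of orbifold points. One must check that $\mathrm{Stab}(\gamma)$ is exactly the extension of $\langle D_\gamma\rangle\times\mathrm{Mod}_{g-1,n+2}$ by the orientation-reversing-on-$\gamma$ involution. One must also check that generic points have trivial stabilizer, so that volumes of orbifold quotients are computed by fundamental domains. There is an additional subtlety in low genus: for instance when $g=2$ the hyperelliptic involution acts trivially on $\mathcal{T}$. This changes the convention by a factor $2$, and I would handle it by fixing the convention that $\mathrm{dVol}_{WP}$ on an orbifold is computed on a fundamental domain for the effective action. With these conventions checked, combining the unfolding identity with the Fubini computation yields the stated formula.
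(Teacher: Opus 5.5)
The paper does not prove this statement; it quotes it from \cite{M7b}. Your outline is the standard proof from that source: unfold to $\mathcal{T}_{g,n}/\mathrm{Stab}(\gamma)$, use Wolpert's formula with $\gamma$ in the pants decomposition, integrate the twist over a circle of length $x$ above $\mathcal{M}_{g-1,n+2}(x,x)$, and get $\frac12$ from the index-two subgroup $\mathrm{Stab}^+(\gamma)$ of classes preserving the two sides of $\gamma$. It is correct whenever no element of $\mathrm{Stab}(\gamma)$ acts trivially on Teichm\"uller space, e.g.\ for $g\geq 3$.

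One minor correction. The classes supported off $\gamma$ do not form a complementary copy of $\mathrm{Mod}_{g-1,n+2}$, since they already contain $D_\gamma$. Rather, $\mathrm{Stab}^+(\gamma)$ is an extension of $\mathrm{Mod}_{g-1,n+2}$ by the central subgroup $\langle D_\gamma\rangle$. Also, $\tau_1$ is not invariant under lifts of $\mathrm{Mod}_{g-1,n+2}$, so the level set is a circle bundle rather than a product. This is harmless: you only use that $\langle D_\gamma\rangle$ is normal and that the twist flow commutes with $\mathrm{Stab}(\gamma)$.

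The genuine error is your resolution of the case $g=2$. The hyperelliptic involution $\iota$ fixes every simple closed curve up to isotopy and acts by $-1$ on homology. Hence $\iota\in\mathrm{Stab}(\gamma)\setminus\mathrm{Stab}^+(\gamma)$, and the side-exchanging coset is $\iota\,\mathrm{Stab}^+(\gamma)$, which acts on $\mathcal{T}_2$ exactly as $\mathrm{Stab}^+(\gamma)$ does. With your proposed convention (fundamental domains for the effective action), the index-two quotient is therefore a bijection and contributes no $\frac12$. You would get $\int_{\mathcal{M}_2}F^\gamma\,\mathrm{dVol}_{WP}=\int_{\mathbb{R}_{\geq0}}F(x)\,V_{1,2}(x,x)\,x\,\mathrm{d}x$, twice the stated right-hand side. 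Here $V_{1,2}$ is unaffected, since a generic point of $\mathcal{T}_{1,2}$ has trivial stabilizer in the pure mapping class group.

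The stated formula requires the orbifold measure instead. It weights each point $X$ by $1/|\mathrm{Stab}_X|$, so passing to a subgroup of index $k$ multiplies volumes by exactly $k$ even when some elements act trivially. This is also the normalization of the volumes in the paper, whose coefficients are intersection numbers on $\overline{\mathcal{M}}_{g,n}$. In that convention the index-two step contributes exactly $\frac12$ in all cases, and your argument goes through. The same remark applies to the elliptic involution if you allow $(g,n)=(1,1)$.
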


Thanks to Markov inequality, the estimates of the expected values yield the first result, ie the case $L=o(\sqrt{g})$. For the second part of the theorem, their proof relies on new estimates over intersection numbers.  

\section{Expansion of intersection numbers}\label{sec:internb}
\subsection{Outline of the proof}

Let us recall the statement of Theorem~\ref{thm:internb}: 
\begin{thm*}
Given the integers $n, s\geq 1,$, there exist:
\begin{itemize}
    \item for any $r\leq s$, $q_n^{{(r)}}$ polynomial in $\mathbf{d}=(d_1,\ldots,d_n)$ of degree $2r$ for $r\leq s$ with 
    
    \begin{equation}
        q_n^{(r)}(\mathbf{d})=\left(\frac{-1}{\pi^{2}}\right)^{r}\sum_{i_1+\ldots+i_n=2r}\frac{(2r-1)!!}{i_1!\ldots i_n!}\cdot d_1^{i_1}\ldots d_n^{i_n}+\mathcal{O}_{n,r}\left(|\mathbf{d}|^{2r-1}\right),
    \end{equation}

    \item for any $2\leq r\leq s$, $q_{n,J}^{{(r)}}$ $J$-partial polynomials with $\mathrm{deg}(q_{n,J}^{(r)})\leq 2r-4$ and $\mathrm{supp}(q_{n,J}^{(r)})\subset \{\mathbf{d}\in\mathbb{N}^{n-|J|}\lvert |\mathbf{d}|<2r-4\}$
\end{itemize}
such that for any $\mathbf{d}=(d_{1},\ldots,d_{n})$
\begin{equation}
\frac{[\tau_{d_1} \ldots \tau_{d_n}]_{g,n}}{V_{g,n}}=1+\frac{e_{n}^{(1)}(\mathbf{d})}{g}+\ldots+\frac{e_{n}^{(s)}(\mathbf{d})}{g^{s}}+\mathcal{O}_{n,s}\left(\frac{|\mathbf{d}|^{2s+2}}{g^{s+1}}\right)
\end{equation}
where $|\mathbf{d}|=\sum_{i=1}^{n}d_i$ and
\begin{align}
e_{n}^{(r)}(\mathbf{d}) &= q_n^{(r)}(\mathbf{d})+\sum_{J\subsetneq\{\mathbf{n}\}} q_{n,J}^{{(r)}}(\mathbf{d})\\
&=\left(\frac{-1}{\pi^{2}}\right)^{r}\sum_{\substack{i_1+\ldots+i_n\\=2r}}\frac{(2r-1)!!}{i_1!\ldots i_n!}\cdot d_1^{i_1}\ldots d_n^{i_n}+\mathcal{O}_{n,r}\left(|\mathbf{d}|^{2r-1}\right).
\end{align}
Moreover, there exist a constant $C_n$ depending on $n$ such that for any $r$ and $\mathbf{d}$, $e_n^{(r)}(\mathbf{d})$ admits the following bound: 
\begin{align*}
    |e_{n}^{(r)}(\mathbf{d})|\leq (C_nr)^{r}\sum_{\substack{i_1+\ldots \\+i_n\leq 2r}}\frac{d_1^{i_1}\ldots d_n^{i_n}}{i_1!\ldots i_n!}. 
\end{align*}
\end{thm*}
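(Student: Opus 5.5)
The plan is an induction on the Euler characteristic $2g+n$, run directly on Mirzakhani's recursion and following the scheme of Mirzakhani--Zograf, while tracking at each order $r$ the exact top-degree part and a factorial-type bound on the coefficients. Divide the recursion by $V_{g,n}$ and write each term as a ratio of volumes times a normalised intersection number of lower complexity. Expand the ratios $V_{g,n-1}/V_{g,n}$ and $V_{g-1,n+1}/V_{g,n}$ with Theorem~\ref{thm:expratios}; both carry a factor $\sim 1/(8\pi^2 g)$. Expand the lower normalised intersection numbers with the induction hypothesis. The sums over $l$ are handled by Abel summation, rewriting $\sum_l a_l F(l)$ in terms of $a_{l+1}-a_l$ and the boundary value $a_\infty=1$. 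Corollary~\ref{cor:polandai} then turns each sum into a polynomial in $\mathbf{d}$, and the top-degree coefficient is exactly halved. The $\mathcal{C}$ term is controlled by Lemma~\ref{lem:sumratioMZ}: up to $\mathcal{O}(g^{-s-1})$, only splittings where one piece has Euler characteristic at most about $s$ survive. On such a small piece the intersection number vanishes unless the indices in $I$ are bounded, by Proposition~\ref{prop:internbMZ}(3). This is exactly where the $J$-partial polynomials arise: terms supported on $|\mathbf{d}_{I}|<2r-4$ that are polynomial in the remaining variables. One should also check that such contributions first appear at order $g^{-2}$ and have degree at most $2r-4$.

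Next comes the identification of the leading homogeneous part. I would not extract it from the recursion by brute force. Instead I would posit that the top-degree part of $e_n^{(r)}$ is $\left(\frac{-1}{\pi^2}\right)^r\frac{(2r-1)!!}{(2r)!}|\mathbf{d}|^{2r}$, which equals $\frac{1}{r!}\left(\frac{-|\mathbf{d}|^2}{2\pi^2}\right)^r$. Equivalently, the generating series in $1/g$ behaves like $\exp(-|\mathbf{d}|^2/(2\pi^2 g))$ at leading order. With this ansatz I would verify that the top-degree parts of the $\mathcal{A}$ and $\mathcal{B}$ terms reproduce it. Under the recursion $\mathbf{d}\mapsto(d_1+d_j+l-1,\ldots)$, $|\mathbf{d}|$ shifts by $l-1$. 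The finite-difference structure in $l$, namely $\sum_l(a_{l+1}-a_l)\,[\,\cdot\,]$, combined with $d_0=3g-3+n-|\mathbf{d}|$, produces a derivative in $|\mathbf{d}|$ divided by $g$. Matching powers of $g$ then gives a first-order recurrence in $r$ on the top coefficient. Its solution is the exponential above, once the constant $1/(2\pi^2)$ is fixed by the first-order term $e_n^{(1)}$, which can be computed by hand from Lemma~\ref{lem:sumoveri}. The lower-degree terms are simply collected into $\mathcal{O}_{n,r}(|\mathbf{d}|^{2r-1})$.

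For the error term $\mathcal{O}(|\mathbf{d}|^{2s+2}/g^{s+1})$, I would split into two regimes. When $|\mathbf{d}|^2\gtrsim g$, the trivial bound $[\tau_{\mathbf{d}}]_{g,n}\le V_{g,n}$ (from monotonicity of the normalised intersection numbers in $\mathbf{d}$) already suffices, provided the partial sums are bounded by $|\mathbf{d}|^{2s+2}/g^{s+1}$ too. That follows from the coefficient bound applied for each $r\le s$. When $|\mathbf{d}|^2\lesssim g$, the recursion propagates the error, and degree counting shows that each step adds at most two powers of $|\mathbf{d}|$ per power of $1/g$.

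The main obstacle will be the uniform bound $|e_n^{(r)}|\le (C_nr)^r\sum \mathbf{d}^{\mathbf{i}}/\mathbf{i}!$. The difficulty is that all constants must grow only like $r^r$, rather than the $r^{cr}$ a naive induction gives. The losses come from three places: the moments $\sum_i i^j(a_{i+1}-a_i)$, which grow like $j!/\pi^{2j}$, so I would use the explicit decay $a_{i+1}-a_i\asymp 2^{-2i}$; the coefficients $b_n^{(i)},c_n^{(i)}$, for which I would need a bound of the form $(Cr)^r$, proved by the same induction; and the convolution of expansions in the $\mathcal{C}$ term. My plan is to phrase the induction through a majorant generating function $\sum_r (C_nr)^{-r}|e^{(r)}|t^r$, using the normalisation $1/\mathbf{i}!$, which behaves well under the shifts $d\mapsto d+l-1$ since these act like $e^{\partial}$. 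The goal is to show that the recursion maps this majorant class to itself for $C_n$ large. The $\mathcal{C}$ term requires the most care, since the binomial convolutions must be absorbed using $\binom{r}{k}k^k(r-k)^{r-k}\le r^r$.
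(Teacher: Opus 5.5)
You have two genuine gaps: the $l$-sums are never localized, and the leading-term calibration you propose would contradict the constant you posit.

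\textbf{The sums over $l$.} You apply Mirzakhani's recursion itself and Abel-sum $\sum_{l=0}^{d_0}a_lF(l)$. But $d_0=3g-3+n-|\mathbf d|$ and $a_l\to1$, so nothing localizes. Write $a_l=1-(1-a_l)$. The part carried by $a_\infty=1$ is $\sum_{l\le d_0}F(l)$, a sum of lower intersection numbers with indices up to order $g$. There the induction hypothesis says nothing, since its remainder $|\mathbf d|^{2s+2}/g^{s+1}$ is huge. This bulk is not negligible: for $\mathcal B$, after the factor $1/(8\pi^2g)$ from the volume ratios, it has the size needed to produce the leading $1$. It depends on $[\tau_{\mathbf k}]/V$ for $|\mathbf k|$ of order $\sqrt g$ and beyond.

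The missing idea, which the paper takes from Mirzakhani--Zograf, is to take the difference in $d_1$ \emph{before} expanding. Since $d_0$ drops by one when $d_1$ increases by one, reindexing $l\mapsto l+1$ cancels the bulk exactly. The weights become $a_{l-1}-a_l$ (with $a_{-1}=0$), which decay like $4^{-l}$. Only then do Corollary~\ref{cor:polandai} and the induction hypothesis apply. The ratio is recovered by discrete integration from $\mathbf d=\mathbf 0$, where it equals $1$ exactly, together with symmetry. The induction is then on the order $s$, for all $n$ at once and uniformly in $g$, not on $2g+n$.

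Your remark that the recursion ``produces a derivative in $|\mathbf d|$ divided by $g$'' points at this, but nothing in your plan actually produces it. Everything downstream presupposes the differenced form. That includes your two-regime remainder argument and your majorant-series route to the $(C_nr)^r$ bound. The latter is a reasonable alternative to the paper's coefficientwise induction via $\sum_i(a_{i+1}-a_i)i^r\le 2\,r!$ and $V_{g,n}\le C^{2g+n}(2g+n)!$.

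\textbf{The leading constant.} Computing $e_n^{(1)}$ by hand, as you propose, gives top-degree part $-|\mathbf d|^2/\pi^2$, not $-|\mathbf d|^2/(2\pi^2)$. Since $a_{-1}=0$, $a_0=\tfrac12$ and $a_i\to1$, one has $\sum_{i\ge0}(a_{i-1}-a_i)=-1$; Lemma~\ref{lem:sumoveri} only accounts for the terms $i\ge1$. So $\mathcal A$ contributes $\frac{1}{8\pi^2}\cdot8\cdot2d_j\cdot(-1)$ for each $j\ge2$, and $\mathcal B$ contributes $\frac{1}{8\pi^2}\cdot16\,d_1\cdot(-1)$, to the top part of $\delta_1e_n^{(1)}$. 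That top part is therefore $-\frac{2}{\pi^2}|\mathbf d|$. The same bookkeeping turns your first-order recurrence into $t_{r+1}=-t_r/((r+1)\pi^2)$, so the top part is $\frac1{r!}(-|\mathbf d|^2/\pi^2)^r$.

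The truncations at small indices in $\mathcal A$ and $\mathcal B$ also matter ($\tau_{-1}=0$, and empty convolutions when $k_1+k_2<0$). They already produce non-polynomial corrections at order $g^{-1}$. For $n=1$ one finds $e_1^{(1)}(d)=-\frac1{\pi^2}(d^2-\tfrac32d+1)$ for $d\ge1$ but $e_1^{(1)}(0)=0$. You can check this against the exact identity $V_{g,1}(2\pi i)=0$. So your check that such contributions first appear at order $g^{-2}$ would fail too.

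The paper gets $2\pi^2$ by applying the halving of Corollary~\ref{cor:polandai} to $\sum_{i\ge0}(a_{i-1}-a_i)$, i.e.\ treating that sum as $-\tfrac12$. On this computation, the displayed leading coefficient should read $(-2/\pi^2)^r$ rather than $(-1/\pi^2)^r$. Partial-polynomial corrections also already occur at $r=1$. Neither your argument nor the paper's establishes the statement as written.
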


We mostly follow the proof of Mirzakhani and Zograf, except for the contribution of the third term of the recursion formula. Through the proof, we are able to compute the dominant part of the coefficients of the expansion $e_n^{(s)}$. We emphasize that it would be much more difficult to write down explicitly the relations for non dominant terms and therefore to identify them. In fact, the $J$-partial polynomials are of degree strictly less than the main polynomial, so we can ignore them, and we don't need to consider the third term of the recursion to compute dominant coefficients. We prove the bound $(C_ns)^{s}$ on the coefficients of the polynomials separately in Subsection~\ref{subsec:bound}.

Before going into details, we make a few elementary remarks on the behaviour of sums of polynomials. 

\begin{rem}\label{rem:convo}

\begin{enumerate}
    \item $\mathrm{Faulhaber's \;formula.}$ Let $r\in\mathbb{N}$, 
$$\sum_{i=1}^{d}i^{r}=\sum_{k=0}^{r+1}\frac{\mathrm{Ber}_{r+1-k}}{r+1}d^{k}$$ is a polynomial in $d$ of degree $r+1$ with $\mathrm{Ber}_{k}$ the $k$-th Bernoulli number and $\mathrm{Ber}_0=1$. 
    \item Let $p_1$ be a polynomial of degree $r_1$ and $p_2$ be a polynomial of degree $r_2$. Then the convolution of $p_1$ and $p_2$ 
    $$p_1\ast p_2(k)=\sum_{k_1+k_2=k}p_1(k_1)\cdot p_2(k_2)$$
    is a polynomial of degree $r_1+r_2+1$.
    \item Let $p$ be a polynomial in two variables $(d_1,d_2)$ of degree $r$. Then
    $$q(d)=\sum_{d_1+d_2=d}p(d_1,d_2)$$
    is a polynomial in $d$ of degree $r+1$.
\end{enumerate}
\end{rem}

Partial polynomials are also well-behaved in the following sense. 

\begin{rem}\label{rem:partpol}\begin{enumerate}
    \item The product of a $J$-partial polynomial with a polynomial is a $J$-partial polynomial.
    \item The sum of two $J$-partial polynomials is a $J$-partial polynomial.
    \item Let $p$ be a $J$-partial polynomial in $n$ variables of degree $r$. Then 
    $$q(d_1,\ldots d_n)=\sum_{i=0}^{d_1}p(i,d_2\ldots d_n)$$
    is a $J$-partial polynomial of degree $r+1$ if $1\in J $ and of degree $r$ if $1\notin J$. In both cases the support remains unchanged. 
\end{enumerate}
\end{rem}

We are now ready to prove Theorem~\ref{thm:internb}.

\begin{proof}[Proof of Theorem~\ref{thm:internb}] The base case of the induction on $s$ can be found in \cite{AM22}: 
$$\frac{[\tau_{d_1} \ldots \tau_{d_n}]_{g,n}}{V_{g,n}}=1+\mathcal{O}_{n}\left(\frac{|\mathbf{d}|^{2}}{g}\right).$$

For the induction step, we suppose that for any $n$, intersection numbers admit a $s$-th order asymptotic expansion: 
$$\frac{[\tau_{d_1} \ldots \tau_{d_n}]_{g,n}}{V_{g,n}}=1+\frac{e_{n}^{(1)}(\mathbf{d})}{g}+\ldots+\frac{e_{n}^{(s)}(\mathbf{d})}{g^{s}}+\mathcal{O}_{n,s}\left({\frac{|\mathbf{d}|^{2s+2}}{g^{s+1}}}\right)$$
such that for any $r\leq s$, $e_n^{(r)}$ is as described in Theorem~\ref{thm:internb}. We aim to prove there exist $e_n^{(s+1)}$, $q_n^{(s+1)}$, $q_{n,J}^{(s+1)}$ as in Theorem~\ref{thm:internb} such that:  

\begin{align*}
\frac{[\tau_{d_1} \ldots \tau_{d_n}]_{g,n}}{V_{g,n}}=1+\frac{e_{n}^{(1)}(\mathbf{d})}{g}+\ldots+\frac{e_{n}^{(s+1)}(\mathbf{d})}{g^{s+1}}+\mathcal{O}_{n,s}\left(\frac{|\mathbf{d}|^{2(s+2)}}{g^{s+2}}\right)
\end{align*}

and 
\begin{align*}
   e_n^{(s+1)}(\mathbf{d})&=q_n^{(s+1)}(\mathbf{d})+\sum_{J\subsetneq\{\mathbf{n}\}}q_{n,J}^{(s+1)}(\mathbf{d})\\
   &=\left(\frac{-1}{\pi^{2}}\right)^{s+1}\sum_{\substack{i_1+\ldots+i_n\\=2s+2}}\frac{(2s+1)!!}{i_1!\ldots i_n!}\cdot d_1^{i_1}\ldots d_n^{i_n}+\mathcal{O}_{n,s}\left(|\mathbf{d}|^{2s+1}\right)
\end{align*}

\subsection{Asymptotic expansion of the discrete derivative}
As in \cite{MZ}, we first study the discrete derivative of intersection numbers and decompose it in three terms corresponding to the three terms of topological recursion. 

\begin{align}\label{eq:discrderivative}
    \frac{\delta_1[\tau_{d_{1}}\,\ldots \tau_{d_n}]_{g,n}}{V_{g,n}}&=\frac{[\tau_{d_{1}+1}\,\ldots \tau_{d_n}]_{g,n}-[\tau_{d_{1}}\,\ldots \tau_{d_n}]_{g,n}}{V_{g,n}}\\
    &=\frac{{\delta_1\mathcal{A}}_{g,n}(\mathbf{d})}{V_{g,n}}+\frac{{\delta_1\mathcal{B}}_{g,n}(\mathbf{d})}{V_{g,n}}+\frac{{\delta_1\mathcal{C}}_{g,n}(\mathbf{d})}{V_{g,n}}
\end{align}

For each term we show that the induction hypothesis implies it admits an expansion up to order $(s+1)$, that the $(s+1)$-th coefficient of this expansion is the sum of a polynomial and of $J$-partial polynomials and that the error term is $\mathcal{O}_{n,s}\left(|\mathbf{d}|^{2s+4}/g^{s+2}\right)$.  

\subsection{Contribution of the first term $\frac{\delta_1{\mathcal{A}}_{g,n}(\mathbf{d})}{V_{g,n}} $. }

The first derivative of the first term is \cite{MZ}:
    \begin{align*}
        \frac{\delta_1{\mathcal{A}}_{g,n}(\mathbf{d})}{V_{g,n}} &= \frac{1}{4 \pi^2 (2g-3+n)}\cdot \frac{4\pi^2 (2g-3+n)V_{g,n-1}}{V_{g,n}}\cdot\frac{\delta_1{\mathcal{A}}_{g,n}(\mathbf{d})}{V_{g,n-1}},\\
        \frac{\delta_1{\mathcal{A}}_{g,n}(\mathbf{d})}{V_{g,n-1}} &= 8\;\sum_{j=2}^{n}\sum_{i=0}^{d_0} (a_{i-1}-a_{i})(2d_j+1) \frac{[\tau_{d_{1}+d_{j}+i-1}\tau_{d_{2}}\ldots\widehat{\tau_{d_{j}}}\ldots \tau_{d_{n}}]_{g,n-1}}{V_{g,n-1}}
    \end{align*}
where, by convention, $a_{-1}=0$.
We consider separately the expansion of the factor independent of $\mathbf{d}$: 
$$\frac{1}{4 \pi^2 (2g-3+n)}\cdot \frac{4\pi^2 (2g-3+n)V_{g,n-1}}{V_{g,n}}$$
and the expansion of the term that depends on $\mathbf{d}$: 
$$\frac{\delta_1{\mathcal{A}}_{g,n}(\mathbf{d})}{V_{g,n-1}}.$$ 
On one side, we have a product of asymptotic expansions. 
\begin{rem}\label{rem:prodexp}
Let $F, H:\mathbb{N}\rightarrow\mathbb{R}$ admit $s$-th order asymptotic expansions 
\begin{align*}
F(g)=e_F^{(0)}+\frac{e_F^{(1)}}{g}+\ldots + \frac{e_F^{(s)}}{g^{s}}+\mathcal{O}_{s}\left(\frac{1}{g^{s+1}}\right)
\end{align*}
and 
\begin{align*}
    H(g)=\frac{e_H^{(k)}}{g^k}+\ldots + \frac{e_H^{(s)}}{g^{s}}+\mathcal{O}_{s}\left(\frac{1}{g^{s+1}}\right)
\end{align*}
then $F\cdot H$ admits a $s+k$-th order expansion: 
\begin{align*}
    F\cdot H(g)=\frac{e_{F\cdot H}^{(k)}}{g^k}+\ldots + \frac{e_{F\cdot H}^{(s+k)}}{g^{s+k}}+\mathcal{O}_{s}\left(\frac{1}{g^{s+k+1}}\right).
\end{align*}
\end{rem}
According to Remark~\ref{rem:prodexp}, $$\frac{1}{4 \pi^2 (2g-3+n)}\cdot \frac{4\pi^2 (2g-3+n)V_{g,n-1}}{V_{g,n}}$$ admits an expansion up to order $s+1$ because $$\frac{1}{4 \pi^2 (2g-3+n)}=\mathcal{O}_{n,s}\left({\frac{1}{g}}\right).$$
We write the expansion of the factor: 
$$\frac{1}{4 \pi^2 (2g-3+n)}\cdot \frac{4\pi^2 (2g-3+n)V_{g,n-1}}{V_{g,n}}=\frac{\mathfrak{a}_n^{(1)}}{g}+\ldots + \frac{\mathfrak{a}_n^{(s+1)}}{g^{s+1}}+\mathcal{O}_{n,s}\left(\frac{1}{g^{s+2}}\right)$$
where $$\mathfrak{a}_n^{(r)}=\frac{1}{8\pi^{2}}\sum_{r_1+r_2=r}\left(\frac{n-3}{2}\right)^{r_1}\cdot b_{n-1}^{(r_2)}.$$ 
In particular, $\mathfrak{a}_n^{(0)}=\frac{1}{8\pi^{2}}$.
On the other side, $\delta_1{\mathcal{A}}_{g,n}(\mathbf{d})/V_{g,n-1}$ also admits a $s$-th order expansion. This follows directly from the expansion of $\frac{[\tau_{d'_{1}}\ldots\tau_{d'_{n-1}}]_{g,n-1}}{V_{g,n-1}}$: 
\begin{align*}
    \frac{\delta_1{\mathcal{A}}_{g,n}(\mathbf{d})}{V_{g-1,n+1}}&=8\;\sum_{j=2}^{n}\sum_{i=0}^{d_0} (a_{i-1}-a_{i})(2d_j+1)\cdot e_{n-1}^{(0)}(d_1+d_j+i-1,d_2\ldots\widehat{d_j},\ldots d_n)\\
    &+\frac{1}{g}\cdot 8\;\sum_{j=2}^{n}\sum_{i=0}^{d_0} (a_{i-1}-a_{i})(2d_j+1)\cdot e_{n-1}^{(1)}(d_1\!+\!d_j\!+\!i\!-\!1,d_2\ldots\widehat{d_j},\ldots d_n)\\
    &+\ldots \\
    &+\frac{1}{g^{s}}\cdot 8\;\sum_{j=2}^{n}\sum_{i=0}^{d_0} (\!a_{i-1}\!-\!a_{i}\!)(2d_j+1) \cdot e_{n-1}^{(s)}(d_1\!+\!d_j\!+\!i\!-\!1,d_2\ldots\widehat{d_j},\ldots d_n)\\
    &+8\;\sum_{j=2}^{n}\sum_{i=0}^{d_0} (a_{i-1}-a_{i})(2d_j+1)\cdot\mathcal{O}_{n,s}\left(\frac{|\mathbf{d}|^{2s+2}}{g^{s+1}}\right)\\
    &=e_{\delta_1\mathcal{A}_n/V_{n-1}}^{(0)}(\mathbf{d})\!+\!\frac{e_{\delta_1\mathcal{A}_n/V_{n-1}}^{(1)}(\mathbf{d})}{g}\!+\!\ldots+\frac{e_{\delta_1\mathcal{A}_n/V_{n-1}}^{(s)}(\mathbf{d})}{g^{s}}\!+\!\mathcal{O}_{n,s}\left(\frac{|\mathbf{d}|^{2s+3}}{g^{s+1}}\right). 
\end{align*}
The error term is polynomial of degree $2s+3$ in $\mathbf{d}$: we gained a degree through multiplication by $(2d_j+1)$. According to Lemma~\ref{cor:polandai}, the sum over $i$ does not change the degree. 
\begin{claim}\label{claim:A2}
The $s$-th term of the expansion $e_{\delta_1\mathcal{A}_n/V_{n-1}}^{(s)}$ verifies:
    \begin{align*}
    e_{\delta_1\mathcal{A}_n/V_{n-1}}^{(s)}(\mathbf{d})
    &=q_{\delta_1\mathcal{A}_n/V_{n-1}}^{(s)}(\mathbf{d})+\sum_{J\subsetneq\{\mathbf{n}\}} q_{\delta_1\mathcal{A}_n/V_{n-1},J}^{(s)}(\mathbf{d})\\
    &=-8\left(\frac{-1}{\pi^{2}}\right)^{s}\sum_{\substack{i_1+\ldots\\+i_n=2s}}\sum_{j=2}^{n}\frac{(2s-1)!!}{i_1!\ldots i_n!}\cdot d_1^{i_1}\ldots d_j^{i_j+1} \ldots d_n^{i_n}+\mathcal{O}_{n,s}\left(|\mathbf{d}|^{2s}\right)
    \end{align*} 
where $q_{\delta_1\mathcal{A}_n/V_{n-1}}^{(s)}$ is a polynomial in $n$ variables of degree $2s+1$ and $q_{\delta_1\mathcal{A}_n/V_{n-1},J}^{(s)}$ are $J$-partial polynomials such that $\mathrm{deg}(q_{\delta_1\mathcal{A}_n/V_{n-1},J}^{(s)})\leq 2s-3$ and $$\mathrm{supp}(q_{\delta_1\mathcal{A}_n/V_{n-1},J}^{(s)})\subset \{\mathbf{d}\in\mathbb{N}^{n-|J|}\lvert |\mathbf{d}|<2s-3\}.$$ 
\end{claim}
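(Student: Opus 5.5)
We take the explicit formula for $e_{\delta_1\mathcal{A}_n/V_{n-1}}^{(s)}$ read off from the expansion above:
$$e_{\delta_1\mathcal{A}_n/V_{n-1}}^{(s)}(\mathbf{d})=8\sum_{j=2}^{n}(2d_j+1)\sum_{i=0}^{d_0}(a_{i-1}-a_i)\, e_{n-1}^{(s)}(d_1+d_j+i-1,d_2,\ldots,\widehat{d_j},\ldots,d_n).$$
We insert the induction hypothesis $e_{n-1}^{(s)}=q_{n-1}^{(s)}+\sum_{J'}q_{n-1,J'}^{(s)}$ and treat the two kinds of pieces separately. The first step removes the upper limit $d_0$. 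Since $a_{i-1}-a_i$ decays exponentially and every summand is at most polynomial in $i$, replacing $\sum_{i=0}^{d_0}$ by $\sum_{i=0}^{\infty}$ costs $\mathcal{O}(|\mathbf{d}|^{N}2^{-d_0})$. Here $d_0=3g-3+n-|\mathbf{d}|$. This is absorbed in the error term when $|\mathbf{d}|\le g$. When $|\mathbf{d}|$ is comparable to $g$, the claimed error $|\mathbf{d}|^{2s+3}/g^{s+1}$ is at least of order $1$, so the bound is trivial there. After this reindexing, note that $a_{i-1}-a_i=-(a_i-a_{i-1})$, so Corollary~\ref{cor:polandai} applies to the shifted sum with an overall minus sign.

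For the polynomial part, $p(i,\mathbf{d})=(2d_j+1)\,q_{n-1}^{(s)}(d_1+d_j+i-1,\ldots)$ is a polynomial in $(i,\mathbf{d})$ of degree $2s+1$. By Corollary~\ref{cor:polandai}, $\sum_i p(i,\mathbf{d})(a_{i-1}-a_i)$ is a polynomial in $\mathbf{d}$ of degree $2s+1$, and its top coefficients are $-\tfrac12$ times the $i$-free top coefficients of $p$. The top homogeneous part of $p$ is $2d_j$ times the leading part of $q_{n-1}^{(s)}$, evaluated at $(d_1+d_j,d_2,\ldots,\widehat{d_j},\ldots)$. We write it out:
$$2d_j\left(\tfrac{-1}{\pi^2}\right)^s\sum_{k+\sum_{l\ne1,j}i_l=2s}\frac{(2s-1)!!}{k!\prod_{l\neq 1,j}i_l!}(d_1+d_j)^k\prod_{l\neq1,j}d_l^{i_l}.$$
The binomial expansion $(d_1+d_j)^k/k!=\sum_{i_1+i_j=k}d_1^{i_1}d_j^{i_j}/(i_1!i_j!)$ turns this into the full multinomial sum over $i_1+\cdots+i_n=2s$, multiplied by $d_j$. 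Multiplying by $8\cdot(-\tfrac12)\cdot 2=-8$ and summing over $j$ gives exactly the displayed leading term. Every lower-order contribution, including the terms of $i$-degree at least $1$ (Corollary~\ref{cor:polandai} only asserts the $i$-free top coefficients), is a polynomial of degree at most $2s$. So this piece is $\mathcal{O}_{n,s}(|\mathbf{d}|^{2s})$.

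For the partial pieces, each $q_{n-1,J'}^{(s)}$ has degree at most $2s-4$ and is supported where $|\mathbf{d}_{I'}|<2s-4$. We must check which new index set $J$ arises after substituting $d_1+d_j+i-1$ for the merged variable. There are two cases. If the merged variable lies in $J'$, we get a $J$-partial polynomial with $J\ni 1,j$, and the compact-support condition in the remaining variables is unchanged. If the merged variable lies in $I'$, the support condition forces $d_1+d_j+i<2s-3$. Then $i$ ranges over a finite set, so the sum over $i$ is finite, and $d_1,d_j$ are bounded. We put $1,j$ into $I$, and the support bound grows to $|\mathbf{d}_I|<2s-3$. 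In both cases the factor $(2d_j+1)$ and the $i$-sum (Corollary~\ref{cor:polandai}, Remark~\ref{rem:partpol}) raise the degree by at most one, to $2s-3$. Any $J=\{\mathbf{n}\}$ pieces that arise are genuine polynomials of degree at most $2s-3$, and we absorb them into $q_{\delta_1\mathcal{A}_n/V_{n-1}}^{(s)}$.

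The main obstacle is this bookkeeping of supports under the merging $d_1+d_j+i-1$. A concern is that a bounded-support condition on the merged index could fail to bound $d_1$ and $d_j$ individually. It does bound them, because all three quantities are nonnegative. A second concern is that the interaction of the truncation at $d_0$ with the partial pieces might create non-polynomial terms. Once the sum is extended to $\infty$, this does not happen.
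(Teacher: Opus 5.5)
Your route is the paper's own. You split $e_{n-1}^{(s)}$ into $q_{n-1}^{(s)}$ and the $q_{n-1,J'}^{(s)}$, do the $i$-sum with Corollary~\ref{cor:polandai}, expand $(d_1+d_j)^k$ binomially, and sort the partial pieces by whether the merged slot is a polynomial variable. The gap is in the step that produces the constant $-8$. The paper's proof takes the same step, so the problem lies in the statement itself.

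The reindexing does not reduce $\sum_{i\ge 0}(a_{i-1}-a_i)\,p(i,\mathbf{d})$ to minus a sum covered by the corollary. With $a_{-1}=0$ one has
\[
\sum_{i\ge 0}(a_{i-1}-a_i)\,p(i,\mathbf{d})=-a_0\,p(0,\mathbf{d})-\sum_{i\ge 0}(a_{i+1}-a_i)\,p(i+1,\mathbf{d}),
\]
and $a_0=(1-2)\zeta(0)=\tfrac12$. The boundary term has full degree, so the top-degree part is $-p_{\mathrm{top}}(0,\mathbf{d})$, not $-\tfrac12\,p_{\mathrm{top}}(0,\mathbf{d})$. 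Equivalently, the total mass is $\sum_{i\ge0}(a_{i-1}-a_i)=a_{-1}-\lim_i a_i=-1$. The constant is therefore $8\cdot(-1)\cdot 2=-16$, not $-8$. Already at $s=0$, where $e_{n-1}^{(0)}=1$, the coefficient is $-8\sum_{j\ge2}(2d_j+1)$. Its top part is $-16\sum_j d_j$, not $-8\sum_j d_j$. The same boundary term is dropped in the $\mathcal{B}$ computation, and the induction actually closes with $(-2/\pi^2)^s$ in place of $(-1/\pi^2)^s$.

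There is a second gap at the lower end of the $i$-sum. You handled the truncation at $d_0$ carefully, but when $d_1=d_j=0$ the $i=0$ term is $[\tau_{-1}\cdots]_{g,n-1}=0$. The inductive expansion is only available for indices in $\mathbb{N}$, so you cannot substitute it there. Let $\mathbf{d}'=(d_l)_{l\neq 1,j}$. On $\{d_1=d_j=0\}$ the true coefficient differs from yours by $4\,q_{n-1}^{(s)}(-1,\mathbf{d}')$, plus the corresponding contributions of the partial pieces. This is a $J$-partial polynomial with $J=\{\mathbf{n}\}\setminus\{1,j\}$ whose degree is in general $2s$, which breaks the bound $\deg\le 2s-3$. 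For $s=0$ the correct coefficient is $-8\sum_{j\ge2}(2d_j+1)+4\,\#\{j\ge 2:\ d_1=d_j=0\}$. This is not a polynomial, yet the statement allows no partial pieces at that order. The claim has to be modified before an argument along these lines can prove it: the constant must be corrected, and partial pieces of degree up to $2s$ supported on $\{d_1=d_j=0\}$ must be allowed.
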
 
\begin{proof}
    In fact, induction hypothesis implies: 
    \begin{align*}
        e_{n-1}^{(s)}&=q_{n-1}^{(s)}+\sum_{J'\subsetneq\{1\ldots n-1\}} q_{n-1,J'}^{(s)}\\
        &=\left(\frac{-1}{\pi^{2}}\right)^{s}\sum_{\substack{i_1+\ldots+i_{n-1}\\=2s}}\frac{(2s-1)!!}{i_1!\ldots i_n!}\cdot d_1^{i_1}\ldots d_{n-1}^{i_{n-1}}+\mathcal{O}_{n,s}\left(|\mathbf{d}|^{2s-1}\right).
    \end{align*}
    where $q_{n-1}^{(s)}$ is a polynomial of degree $2s$, $q_{n-1,J'}^{(s)}$ are $J'$-partial polynomials of degree $\mathrm{deg}(q_{n-1,J'}^{(s)})\leq 2s-4$ and $\mathrm{supp}(q_{n-1,J'}^{(s)})\subset \{\mathbf{d}\in\mathbb{N}^{n-|J'|}\lvert |\mathbf{d}|<2s-4\}$. 
    For the main polynomial: 
    \begin{align*}
        q_{\delta_1\mathcal{A}_n/V_{n-1}}^{(s)}&=8\;\sum_{j=2}^{n}\sum_{i=0}^{d_0} (a_{i-1}-a_{i})(2d_j+1) \cdot q_{n-1}^{(s)}(d_1\!+\!d_j\!+\!i\!-\!1,d_2\ldots\widehat{d_j},\ldots d_n).
    \end{align*}
    It is a polynomial in $\mathbf{d}$ of degree $2s+1$. Let us compute its dominant terms:
    \begin{align*}
        q_{\delta_1\mathcal{A}_n/V_{n-1}}^{(s)}&=8\;\sum_{j=2}^{n}\sum_{i=0}^{\infty} (a_{i-1}-a_{i})(2d_j+1) \cdot\left(\left(\frac{-1}{\pi^{2}}\right)^{s}\sum_{\substack{i_1'+i_2+\ldots\hat{i_j}\\\ldots+i_n=2s}}\frac{(2s-1)!!}{i_1'!i_2!\ldots i_n!}\right.\\
        &\hspace{2.7cm}\left.\cdot (d_1+d_j+i-1)^{i_1'}\ldots d_n^{i_n}+\mathcal{O}_{n,s}\left((|\mathbf{d}|+i-1)^{2s-1}\right)\right)\\
        &=-8\;\sum_{j=2}^{n}\sum_{i=0}^{\infty} (a_{i-1}-a_{i})\cdot \left(2 \cdot\left(\frac{-1}{\pi^{2}}\right)^{s}\sum_{\substack{i_1'+i_2\ldots\hat{i_j}\\\ldots+i_n=2s}}\frac{(2s-1)!!}{i_1'!i_2!\ldots i_n!}\right.\\
        &\hspace{4cm}\left.\cdot \sum_{i_1+i_j=i_1'}\binom{i_1'}{i_1}d_1^{i_1}d_j^{i_j+1}\ldots d_n^{i_n}+\mathcal{O}_{n,s}\left(|\mathbf{d}|^{2s}\right)\right).
    \end{align*}
    Following Lemma~\ref{cor:polandai},
    \begin{align*}
        q_{\delta_1\mathcal{A}_n/V_{n-1}}^{(s)}&=-8\left(\frac{-1}{\pi^{2}}\right)^{s}\;\sum_{j=2}^{n}\sum_{\substack{i_1+\ldots \\+i_n=2s}}\frac{(2s-1)!!}{i_1!\ldots i_n!}\cdot d_1^{i_1}d_2^{i_2}\ldots d_j^{i_j+1}\ldots d_n^{i_n}+\mathcal{O}_{n,s}\left(|\mathbf{d}|^{2s}\right).
    \end{align*}
    
For the $J$-partial polynomials, there are two distinct cases. In the first case, $1\in J'$ and its behavior is similar to the main polynomial. We obtain $q_{\delta_1\mathcal{A}_n/V_{n-1},J}^{(s)}$ a $J$-partial polynomial, where $$J=\{j\}\cup\left\{\begin{array}{cc}
     k&\text{ for }k\in J'\text{ and }k<j  \\
     k+1&\text{ for }k\in J'\text{ and }k\geq j
\end{array}\right\}.$$ 
The partial polynomial gained one degree, thus $\mathrm{deg}(q_{\delta_1\mathcal{A}_n/V_{n-1},J}^{(s)})\leq 2s-3$, and the support did not change: $$\mathrm{supp}(q_{\delta_1\mathcal{A}_n/V_{n-1},J}^{(s)})=\mathrm{supp}(q_{n-1,J'}^{(s)})\subset \{\mathbf{d}\in\mathbb{N}^{n-|J|}\lvert |\mathbf{d}|<2s-4\}.$$

In the second case, $1\notin J'$. Then, following Proposition~\ref{rem:prodexp}, the product of a polynomial and a $J$-partial polynomial remains a $J$-partial polynomial. As $d_j$ is not a polynomial variable, it does not increase the degree of the partial polynomial. The terms of the sum over $i$ are non-zero only for $i\leq 2s+1$. Summing over this terms kill the dependency in $i$. We obtain $q_{\delta_1\mathcal{A}_n/V_{n-1},J}$ a $J$-partial polynomial where 
$$J=\left\{\begin{array}{cccc}
     k &\text{ for }&k\in J',&k< j\\
     k+1&\text{ for }&k\in J',&k\geq j 
\end{array}\right\}.$$ 
Composition with $(d_1,d_j,i)\mapsto (d_1+d_j+i-1)$ translates the support of the partial polynomial: 
\begin{align*}
    d_1+d_j+i-1+\sum_{\substack{i_j\in
    \{\mathbf{n}\}\setminus J,\\i_j\neq 1,j}}d_{i_j}<2s-4\Rightarrow |\mathbf{d}_{\{\mathbf{n}\}\setminus J}|<2s-4+1-i\leq 2s-3.
\end{align*} 
We observe $\mathrm{deg}(q_{\delta_1\mathcal{A}_n/V_{n-1},J}^{(s)})\leq 2s-3$ and $\mathrm{supp}(q_{\delta_1\mathcal{A}_n/V_{n-1},J}^{(s)})\subset \{\mathbf{d}\in\mathbb{N}^{n-|J|}\lvert |\mathbf{d}|<2s-3\}$.

Thus the proof of Claim~\ref{claim:A2} is complete.

\end{proof}

Therefore the ratio $\delta_1{\mathcal{A}}_{g,n}(\mathbf{d})/V_{g,n}$ admits a $s+1$-th order asymptotic expansion and its $(s+1)$-th coefficient is: 
\begin{align*}
    e_{\delta_1{\mathcal{A}}_{n}/{V_{n}}}^{(s+1)}(\mathbf{d})&=\sum_{r=0}^{s}e_{\delta_1{\mathcal{A}}_{n}/{V_{n-1}}}^{(r)}(\mathbf{d})\cdot\mathfrak{a}_{n}^{(s+1-r)}.
\end{align*}

Each term can be decomposed in a main polynomial and a sum of partial polynomials. Since the factor $\mathfrak{a}_{n}^{(s+1-r)}$ does not depend on $\mathbf{d}$, the sum may be rearranged, yielding a decomposition of $e_{\delta_1{\mathcal{A}}_{n}/{V_{g,n}}}^{(s+1)}$ into a main polynomial and partial polynomials:
\begin{align*}
    e_{\delta_1{\mathcal{A}}_{n}/{V_{n}}}^{(s+1)}(\mathbf{d})&=q_{\delta_1{\mathcal{A}}_{n}/{V_{n}}}^{(s+1)}(\mathbf{d})+\sum_{J\subsetneq\{\mathbf{n}\}}q_{\delta_1{\mathcal{A}}_{n}/{V_{n}},J}^{(s+1)}(\mathbf{d})\\
    &=\mathfrak{a}_{n}^{(0)}\cdot q_{\delta_1{\mathcal{A}}_{n}/{V_{n-1}}}^{(s+1)}(\mathbf{d})+\mathcal{O}_{n,s}\left(|\mathbf{d}|^{2s}\right)\\
    &=\left(\frac{-1}{\pi^{2}}\right)^{s+1}\sum_{\substack{i_1+\ldots\\+i_n=2s}}\sum_{j=2}^{n}\frac{(2s-1)!!}{i_1!\ldots i_n!}\cdot d_1^{i_1}\ldots d_j^{i_j+1} \ldots d_n^{i_n}+\mathcal{O}_{n,s}\left(|\mathbf{d}|^{2s}\right).
\end{align*}
with $q_{\delta_1{\mathcal{A}}_{n}/{V_{g,n}}}^{(s+1)}$ a polynomial of degree $2s+1$ and $q_{\delta_1{\mathcal{A}}_{n}/{V_{g,n}},J}^{(s+1)}$ $J$-partial polynomials of degree $\mathrm{deg}(q_{\delta_1\mathcal{A}_n/V_{n-1},J})\leq 2s-3$ and $\mathrm{supp}(q_{\delta_1\mathcal{A}_n/V_{n-1},J}^{(s)})\subset \{\mathbf{d}\in\mathbb{N}^{n-|J|}\lvert |\mathbf{d}|<2s-3\}$.

\subsection{Contribution of the second term $\frac{\delta_1{\mathcal{B}}_{g,n}(\mathbf{d})}{V_{g,n}} $.}\label{sec:B}

The procedure is similar for the term $\frac{\delta_1{\mathcal{B}}_{g,n}(\mathbf{d})}{V_{g,n}}$. We write it as a product: 
\begin{align*}
    \frac{\delta_1{\mathcal{B}}_{g,n}(\mathbf{d})}{V_{g,n}} =& \frac{1}{4\pi^2 (2g-3+n)}\cdot \frac{4 \pi^2 (2g-3+n)V_{g-1,n+1}}{V_{g-1,n+2}}\cdot \frac{V_{g-1,n+2}}{V_{g,n}}\cdot\frac{\delta_1{\mathcal{B}}_{g,n}(\mathbf{d})}{V_{g-1,n+1}},\\
    \frac{\delta_1{\mathcal{B}}_{g,n}(\mathbf{d})}{V_{g-1,n+1}} =& 16\sum_{i=0}^{\infty}\sum_{\substack{k_1+k_2=\\d_1+i-2}}(a_{i-1}-a_{i}) \frac{[\tau_{k_1}\tau_{k_2}\tau_{d_{2}}\ldots \tau_{d_{n}}]_{g-1,n+1}}{V_{g-1,n+1}}.
\end{align*}
As for the first term, we first study separately the expansion of $$(\ast):=\frac{1}{4\pi^2 (2g-3+n)}\cdot \frac{4 \pi^2 (2g-3+n)V_{g-1,n+1}}{V_{g-1,n+2}}\cdot \frac{V_{g-1,n+2}}{V_{g,n}}$$
and the expansion of $$\frac{\delta_1{\mathcal{B}}_{g,n}(\mathbf{d})}{V_{g-1,n+1}}.$$

The factor $(\ast)$ admits a $(s+1)$-th order asymptotic expansion according to Remark~\ref{rem:prodexp} and because 
$$\frac{1}{4\pi^2 (2g-3+n)}=\mathcal{O}_{n}\left(\frac{1}{g}\right).$$ 

We denote $\mathfrak{b}_{n}^{(r)}$ the coefficients of this expansion:
\begin{align*}
    (\ast)=\frac{\mathfrak{b}_{n}^{(1)}}{g}+\ldots +\frac{\mathfrak{b}_{n}^{(s+1)}}{g^{s+1}}+\mathcal{O}_{n,s}\left(\frac{1}{g^{s+2}}\right)
\end{align*}
with 
\begin{align*}
    \mathfrak{b}_n^{(r)}=\frac{1}{8\pi^{2}}\sum_{\substack{r_1+r_2\\+r_3=r}}&\left(\frac{n-3}{2}\right)^{r_1}\cdot a_{n+1}^{(r_2)}\cdot b_{n+1}^{(r_3)}
\end{align*}
and, in particular, $\mathfrak{b}_n^{(1)}=\frac{1}{8\pi^{2}}.$

On the other hand, $\frac{\delta_1{\mathcal{B}}_{g,n}(\mathbf{d})}{V_{g-1,n+1}}$ admits a $s$-th order expansion which depends on the expansion of $\frac{[\tau_{k_1}\tau_{k_2}\tau_{d_{2}}\ldots \tau_{d_{n}}]_{g-1,n+1}}{V_{g-1,n+1}}$:
\begin{align*}
    \frac{\delta_1{\mathcal{B}}_{g,n}(\mathbf{d})}{V_{g-1,n+1}}&=16\sum_{i=0}^{\infty}\sum_{\substack{k_1+k_2=\\d_1+i-2}}(a_{i-1}-a_{i}) \cdot e_{n+1}^{(0)}(k_1,k_2,d_2\ldots d_n)\\
    &+\frac{1}{g}\left(16\sum_{i=0}^{\infty}\sum_{\substack{k_1+k_2=\\d_1+i-2}}(a_{i-1}-a_{i}) \cdot e_{n+1}^{(1)}(k_1,k_2,d_2\ldots d_n)\right)\\
    &+\ldots \\
    &+\frac{1}{g^{s}}\left(16\sum_{i=0}^{\infty}\sum_{\substack{k_1+k_2=\\d_1+i-2}}(a_{i-1}-a_{i}) \cdot e_{n+1}^{(s)}(k_1,k_2,d_2\ldots d_n)\right)\\
    &+\left(16\cdot\sum_{i=0}^{\infty}\sum_{\substack{k_1+k_2=\\d_1+i-2}}(a_{i-1}-a_{i}) \cdot\mathcal{O}_{n,s}\left(\frac{|\mathbf{d}|^{2s+2}}{g^{s+1}}\right)\right)\\
    &=e_{\delta_1\mathcal{B}_n/V_{n+1}}^{(0)}(\mathbf{d})+\frac{e_{\delta_1\mathcal{B}_n/V_{n+1}}^{(1)}(\mathbf{d})}{g}\!+\!\ldots\!+\!\frac{e_{\delta_1\mathcal{B}_n/V_{n+1}}^{(s)}(\mathbf{d})}{g^{s}}+\mathcal{O}_{n,s}\left(\frac{|\mathbf{d}|^{2s+3}}{g^{s+1}}\right).
\end{align*}
By induction hypothesis, the error term of $\frac{[\tau_{k_1}\tau_{k_2}\tau_{d_{2}}\ldots \tau_{d_{n}}]_{g-1,n+1}}{V_{g-1,n+1}}$ was polynomial in $(k_1,k_2,d_2,\ldots d_n)$ of degree $2s+2$. According to Remark~\ref{rem:convo}, the sum over $k_1+k_2=d_1+i-2$ gives a polynomial in $(d_1+i-2, d_2,\ldots d_n)$ of degree $2s+3$. The Lemma~\ref{lem:sumoveri} ensures that the sum over $i$ leaves the degree unchanged and we obtain a polynomial bound in $\mathbf{d}$ of degree $2s+3$.

\begin{claim}\label{claim:B2}
The $s$-th term of this expansion $e_{\delta_1\mathcal{B}_n/V_{n+1}}^{(s)}$ verifies:
\begin{align*}
e_{\delta_1\mathcal{B}_n/V_{n+1}}^{(s)}(\mathbf{d})
&=q_{\delta_1\mathcal{B}_n/V_{n+1}}^{(s)}(\mathbf{d})+\sum_{J\subsetneq\{\mathbf{n}\}} q_{\delta_1\mathcal{B}_n/V_{n+1},J}^{(s)}(\mathbf{d})\\
&=-8\left(\frac{-1}{\pi^{2}}\right)^{s}\sum_{i_1+...\,+i_n=2s}\frac{(2s-1)!!}{i_1!...\, i_n!}\cdot d_1^{i_1+1} d_2^{i_2}\ldots d_n^{i_n}+\mathcal{O}_{n,s}\left(|\mathbf{d}|^{2s}\right)
\end{align*}
with $q_{\delta_1\mathcal{B}_n/V_{n+1}}^{(s)}$ a polynomial of degree $2s+1$ and $q_{\delta_1\mathcal{B}_n/V_{n+1},J}^{(s)}$ a $J$-partial polynomial of degree $\mathrm{deg}(q_{\delta_1\mathcal{B}_n/V_{n+1},J})\leq 2s-3$ and $\mathrm{supp}(q_{\delta_1\mathcal{B}_n/V_{n+1},J}^{(s)})\subset \{\mathbf{d}\in\mathbb{N}^{n-|J|}\lvert |\mathbf{d}|<2s-2\}$.

\end{claim}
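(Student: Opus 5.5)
We substitute the induction hypothesis decomposition
\[
e_{n+1}^{(s)}=q_{n+1}^{(s)}+\sum_{J'\subsetneq\{1,\ldots,n+1\}}q_{n+1,J'}^{(s)}
\]
into
\[
e_{\delta_1\mathcal{B}_n/V_{n+1}}^{(s)}(\mathbf{d})=16\sum_{i\ge 0}\sum_{k_1+k_2=d_1+i-2}(a_{i-1}-a_i)\,e_{n+1}^{(s)}(k_1,k_2,d_2,\ldots,d_n),
\]
and treat the main polynomial and the partial polynomials separately, exactly as in the proof of Claim~\ref{claim:A2}. The variables are ordered so that $k_1,k_2$ occupy the first two slots.

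\textbf{Main polynomial.} Set $q^{(s)}_{\delta_1\mathcal{B}_n/V_{n+1}}$ to be the expression above with $e_{n+1}^{(s)}$ replaced by $q_{n+1}^{(s)}$. By Remark~\ref{rem:convo}(3), the inner convolution over $k_1+k_2=d_1+i-2$ is a polynomial in $(d_1+i-2,d_2,\ldots,d_n)$ of degree $2s+1$. Corollary~\ref{cor:polandai} then turns the sum over $i$ into a polynomial in $\mathbf{d}$ of the same degree. For the leading term we only keep the top-degree part of $q_{n+1}^{(s)}$. The identity
\[
\sum_{k_1+k_2=m}\frac{k_1^{a}k_2^{b}}{a!\,b!}=\frac{m^{a+b+1}}{(a+b+1)!}+\mathcal{O}(m^{a+b}),
\]
a Beta integral, gives
\[
\sum_{a+b=c}\frac{1}{a!\,b!}\cdot\frac{a!\,b!}{(c+1)!}=\frac{c+1}{(c+1)!}=\frac{1}{c!}.
\]
Hence the top part of the convolution is $(-1/\pi^2)^s\sum_{c+i_2+\cdots+i_n=2s}\frac{(2s-1)!!}{c!\,i_2!\cdots i_n!}(d_1+i-2)^{c+1}d_2^{i_2}\cdots d_n^{i_n}$. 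Replacing $(d_1+i-2)^{c+1}$ by $d_1^{c+1}$ costs only $\mathcal{O}(|\mathbf{d}|^{2s})$. Because $a_{i-1}-a_i=-(a_i-a_{i-1})$ and $\sum_{i\ge0}(a_i-a_{i-1})=\lim a_i=1$ (with $a_{-1}=0$, using Lemma~\ref{lem:sumoveri}), the $i$-sum contributes a factor $-1$. Thus the leading term is $-16(-1/\pi^2)^s\sum\frac{(2s-1)!!}{i_1!\cdots i_n!}d_1^{i_1+1}\cdots$. I expect the constant bookkeeping to yield $-8$ rather than $-16$, probably from an extra factor $\tfrac12$ as in Corollary~\ref{cor:polandai}. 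So I would recheck the normalization of the $i$-sum against the $\mathcal{A}$ computation, where the same mechanism produced $-8$ from the prefactor $8\cdot 2$.

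\textbf{Partial polynomials.} Consider $q_{n+1,J'}^{(s)}$, of degree $\le 2s-4$ with support in $|\mathbf{d}_{I'}|<2s-4$, where $I'$ is the complement of $J'$. There are three cases.
\begin{itemize}
\item Both slots $1,2\in J'$. The convolution over $k_1+k_2$ raises the degree by one and then acts like a polynomial variable in $d_1+i-2$. This gives a $J$-partial polynomial with $J=\{1\}\cup(J'\setminus\{1,2\})$ shifted, of degree $\le 2s-3$, with unchanged support.
\item Exactly one slot is in $J'$, say $1\in J'$ and $2\notin J'$. Then $k_2$ is confined to a finite set by the support condition. Summing over these finitely many $k_2$ with $k_1=d_1+i-2-k_2$ gives a polynomial in $d_1+i$ of the same degree $\le 2s-4$. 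The variable $d_1$ is now polynomial, and the remaining support condition involves only $\mathbf{d}_{I'\setminus\{2\}}$, with $|\cdot|<2s-4-k_2\le 2s-4$.
\item Neither slot is in $J'$. Then $k_1+k_2=d_1+i-2<2s-4$, so $d_1<2s-2-i\le 2s-2$. Hence $d_1$ joins the compactly supported variables, the support bound becomes $|\mathbf{d}_{I}|<2s-2$ (this is the source of the weaker bound $2s-2$ in the statement), and only finitely many $i$ contribute. The degree in the $J$-variables is unchanged.
\end{itemize}

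Collecting terms using Remark~\ref{rem:partpol} gives the claim. The main obstacle is making sure that in the last case the finitely supported sum over $i$ really remains a finite-support function (it does, since $i<2s-2$) and that the sums in the first case converge in $i$ via Lemma~\ref{lem:sumoveri}.
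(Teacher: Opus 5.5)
Your overall route (substitute the decomposition, convolve, sum over $i$, split into three cases for $J'$) is the paper's. The point where you stall, however, is not a bookkeeping error on your side. Your $-16$ is correct, and there is no extra factor $\tfrac12$ to find. Since $a_{-1}=0$ and $a_i\to 1$, we have $\sum_{i\ge0}(a_{i-1}-a_i)=-1$. Corollary~\ref{cor:polandai} gives $\tfrac12$ only for the weights $(a_{i+1}-a_i)_{i\ge0}$, and the identity
\[
\sum_{i\ge0}(a_{i-1}-a_i)\,p(d_1+i)=-a_0\,p(d_1)-\sum_{j\ge0}(a_{j+1}-a_j)\,p(d_1+j+1)
\]
shows that the boundary term $-a_0\,p(d_1)$ supplies a second $-\tfrac12$ of the top-degree part. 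The paper obtains $-8$ exactly by applying Corollary~\ref{cor:polandai} directly to $(a_{i-1}-a_i)$, which drops this term. Claim~\ref{claim:A2} contains the same slip (correctly $8\cdot 2\cdot(-1)=-16$), so calibrating against $\mathcal{A}$ would only reproduce the error.

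A concrete check is $s=0$, where $e^{(0)}_{n+1}\equiv 1$. For $d_1\ge 1$, Lemma~\ref{lem:sumoveri} gives $16\sum_{i\ge0}(a_{i-1}-a_i)(d_1+i-1)=-16d_1+4$, not $-8d_1+\mathcal{O}(1)$. Consistently, after multiplying by the factor $1/(8\pi^2)$ from the volume ratios, the top part of $e_n^{(1)}$ is $-|\mathbf{d}|^2/\pi^2$. This is also what the string equation $V_{g,n+1}(\mathbf{L},2\pi i)=\sum_k\int_0^{L_k}L_kV_{g,n}\,dL_k$ forces at order $1/g$. The statement's normalization would instead give $-|\mathbf{d}|^2/(2\pi^2)$. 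So the claim cannot be proved as written; your argument yields the leading term with $-16$ in place of $-8$.

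There are two further gaps in the structural part.

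\textbf{The boundary case $m=-2$.} Remark~\ref{rem:convo} makes the convolution over $k_1+k_2=m$ a polynomial in $m$ only for $m\ge -1$. At $d_1=i=0$ one has $m=-2$: the actual sum is empty, but the polynomial takes the value $-q^{(s)}_{n+1}(-1,-1,d_2,\ldots,d_n)$. The main part is therefore your polynomial plus $-8\,\delta_{d_1,0}\,q^{(s)}_{n+1}(-1,-1,d_2,\ldots,d_n)$. For $s=0$ this means the value at $d_1=0$ is $-4$, not $4$. For $n\ge 2$ this correction is a $\{2,\ldots,n\}$-partial polynomial of degree $2s$, above the claimed bound $2s-3$. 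The paper's proof asserts polynomiality without addressing this term either, so the degree bound in the statement also needs revising.

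\textbf{Truncation in the one-slot case.} Summing over the finitely many admissible $k_2$ with $k_1=d_1+i-2-k_2$ gives a polynomial in $d_1+i$ only once $d_1+i-2$ exceeds the support bound. For smaller values, the constraint $k_1\ge 0$ truncates the sum. You then need a correction compactly supported in $d_1$, which is the paper's $J_2$-partial polynomial with support $|\mathbf{d}|<2s-2$.
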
 
\begin{proof}[Proof of Claim~\ref{claim:B2}]

By induction hypothesis: 
$$e_{n+1}^{(s)}(k_1,k_2,d_2,\ldots d_n)=q_{n+1}^{(s)}(k_1,k_2,d_2,\ldots d_n)+\sum_{J'\subsetneq\{1\ldots n+1\}} q_{n+1,J'}^{(s)}(k_1,k_2,d_2,\ldots d_n)$$ 
where $q_{n+1}^{(s)}$ is a polynomial of degree $2r$, $q_{n+1,J}^{(s)}$ is a $J$-partial polynomial of degree $\mathrm{deg}(q_{n+1,J}^{(s)})\leq 2s-4$ and $\mathrm{supp}(q_{n+1,J}^{(s)})\subset \{\mathbf{d}\in\mathbb{N}^{n-|J|}\lvert |\mathbf{d}|<2s-4\}$.
    
Let us first study the main polynomial: following Remark~\ref{rem:convo} and Corollary~\ref{cor:polandai},
\begin{align*}
   q_{\delta_1\mathcal{B}_n/V_{n+1}}^{(s)}&=16\sum_{i=0}^{\infty}\sum_{\substack{k_1+k_2=\\d_1+i-2}}(a_{i-1}-a_{i}) \cdot q_{n+1}^{(s)}(k_1,k_2,d_2\ldots d_n)
\end{align*}
is a polynomial in $\mathbf{d}$ of degree $2s+1$. Let us compute its dominant term:
\begin{align*}
   q_{\delta_1\mathcal{B}_n/V_{n+1}}^{(s)}&=16\sum_{i=0}^{\infty}\sum_{\substack{k_1+k_2=\\d_1+i-2}}(a_{i-1}-a_{i}) \cdot\left(\frac{-1}{\pi^{2}}\right)^{s}\left(\sum_{\substack{i_{k_1}+i_{k_2}+i_2+\\\ldots+i_n=2s}}\frac{(2s-1)!!}{i_{k_1}!i_{k_2}!i_2!\ldots i_n!}\right.\\
   &\hspace{2cm}\left.\cdot k_1^{i_{k_1}}k_2^{i_{k_2}}d_2^{i_2}\ldots d_n^{i_n}+\mathcal{O}_{n,s}\left(|k_1+k_2+d_2+\ldots+d_n|^{2s}\right)\right)
\end{align*}

By Remark~\ref{rem:convo},

\begin{align*}
    \sum_{\substack{k_1+k_2=\\d_1+i-2}}k_1^{i_{k_1}}k_2^{i_{k_2}}&=\frac{i_{k_1}!i_{k_2}!}{(i_{k_1}+i_{k_2}+1)!}(d_1+i-2)^{i_{k_1}+i_{k_2}+1}+\mathcal{O}\left((d_1+i-2)^{i_{k_1}+i_{k_2}}\right).
\end{align*}

Then:
\begin{align*}
   q_{\delta_1\mathcal{B}_n/V_{n+1}}^{(s)}&=16\left(\frac{-1}{\pi^{2}}\right)^{s}\sum_{\substack{i_{k_1}+i_{k_2}+i_2+\\\ldots+i_n=2s}}\cdot\frac{(2s-1)!!}{(i_{k_1}+i_{k_2}+1)!i_2!\ldots i_n!}d_2^{i_2}\ldots d_n^{i_n}\\
   &\hspace{2.2cm}\cdot \left(\sum_{i=0}^{\infty}(a_{i-1}-a_{i})(d_1+i-2)^{i_{k_1}+i_{k_2}+1}\right)+\mathcal{O}_{n,s}\left(|\mathbf{d}|^{2s}\right)
\end{align*}
According to Corollary~\ref{cor:polandai}:
\begin{align*}
   q_{\delta_1\mathcal{B}_n/V_{n+1}}^{(s)}&=-8\left(\frac{-1}{\pi^{2}}\right)^{s}\sum_{\substack{i_{k_1}+i_{k_2}+i_2+\\\ldots+i_n=2s}}\frac{(2s-1)!!}{(i_{k_1}+i_{k_2}+1)!i_2!\ldots i_n!}\\
   &\hspace{5.35cm}\cdot d_1^{i_{k_1}+i_{k_2}+1}d_2^{i_2}\ldots d_n^{i_n}+\mathcal{O}_{n,s}\left(|\mathbf{d}|^{2s+1}\right)\\
   &=-8\left(\frac{-1}{\pi^{2}}\right)^{s}\sum_{\substack{i_1+i_2+\ldots\\+i_n=2s}}\frac{(2s-1)!!}{i_1!i_2!\ldots i_n!}\cdot d_1^{i_1+1}d_2^{i_2}\ldots d_n^{i_n}+\mathcal{O}_{n,s}\left(|\mathbf{d}|^{2s+1}\right).
\end{align*}
    
We now consider the contribution of the partial polynomials $q_{n+1,J'}^{(s)}$. There are three cases. First case: both the first and second index belong to $J'$, ie $1,2\in J'$. Then:  
$$16\sum_{i=0}^{\infty}\sum_{\substack{k_1+k_2=\\d_1+i-2}}(a_{i-1}-a_{i}) \cdot q_{n+1,J'}^{(s)}(k_1,k_2,d_2\ldots d_n)$$
contributes to $q_{\delta_1\mathcal{B}_n/V_{n+1},J}^{(s)}$ a $J$-partial polynomial where $J=\{1,j-1\text{ for }j\in J'\}$, with degree strictly less than $2s-3$ and support included in $\{\mathbf{d}\in\mathbb{N}_+^{n-|J|}\lvert|\mathbf{d}|<2s-4\}$. The proof follows the same step as for the main polynomial. 

Second case: neither the first nor the second index belongs to $J'$, ie $1,2\notin J'$, $$16\sum_{i=0}^{\infty}\sum_{\substack{k_1+k_2=\\d_1+i-2}}(a_{i-1}-a_{i}) \cdot q_{n+1,J'}^{(s)}(k_1,k_2,d_2\ldots d_n)$$
contributes to $q_{\delta_1\mathcal{B}_n/V_{n+1},J}^{(s)}$ a $J$-partial polynomial where $J=\{j-1\text{ for }j\in J'\}$, with degree strictly less than $2s-4$ and support included in $\{\mathbf{d}\in\mathbb{N}_+^{n-|J|}\lvert|\mathbf{d}|<2s-2\}$. The support of the partial polynomial is translated: 
\begin{align*}
    k_1+k_2+\sum_{j\notin J'}d_{j-1}<2s-4&\Rightarrow d_1+j-2+\sum_{j+1\notin J'}d_j<2s-4\\&\Rightarrow \sum_{j\notin J}d_j<2s-2-i\leq 2s-2.
\end{align*}
For $i>2s-2$, the contribution is zero. Then we are left with a finite sum of $J$-partial polynomials, which is still a $J$-partial polynomial by Remark~\ref{rem:partpol}. The support is maximal for $i=0$: it is $\{\mathbf{d}\in\mathbb{N}_+^{n-|J|}\lvert|\mathbf{d}|<2s-2\}$. 

Third case: only one of the two first indices belongs to $J'$, ie $1\in J'$, $2\notin J'$ or $1\notin J'$, $2\in J'$, $$16\sum_{i=0}^{\infty}\sum_{\substack{k_1+k_2=\\d_1+i-2}}(a_{i-1}-a_{i}) \cdot q_{n+1,J'}^{(s)}(k_1,k_2,d_2\ldots d_n)$$
contributes to two partial polynomials $q_{\delta_1\mathcal{B}_n/V_{n+1},J_1}^{(s)}$ where $J_1=\{1,j-1\text{ for }j\in J'\}$ and  $q_{\delta_1\mathcal{B}_n/V_{n+1},J_2}^{(s)}$ where $J_2=\{j-1\text{ for }j\in J'\}$. The contribution to the $J_1$-partial polynomial has degree strictly less than $2s-4$ and support $\{\mathbf{d}\in\mathbb{N}_+^{n-|J|}\lvert|\mathbf{d}|<2s-4\}$ and the contribution to the $J_2$-partial polynomial has degree strictly less than $2s-4$ and support $\{\mathbf{d}\in\mathbb{N}_+^{n-|J|}\lvert|\mathbf{d}|<2s-4\}$. 

Let us prove the statement for $1\notin J$, $2\in J$. When $d_1+i\geq 2r+2-\sum_{j+1\notin J'}d_j$, the convolution over $k_1+k_2=d_1+i-2$ is cut by $k_1<2s-4$. Thus we have a finite sum of $J'$-partial polynomials $q_{n+1,J'}^{(0)}(k,d_1\!+\!i\!-\!2\!-\!k,d_2\ldots d_n)$. The sum kills the dependence in $k$ and we obtain a $J_1$-partial polynomial, whose degree is still strictly less than $2s-4$. We handle the sum over $i$ as we did for the other terms. However, this is exact only for $d_1+i>2s-4+2-\sum_{j+1\notin J'}d_j$. When $d_1+i < 2s-4+2-\sum_{j+1\notin J'}d_j$, $k$ does not take all possible values. We add a correction, which can be written as a $J_2$-partial polynomial with support $\{\mathbf{d}\in\mathbb{N}_+^{n-|J_2|}\lvert|\mathbf{d}|<2s-2\}$. 

The proof of Claim~\ref{claim:B2} is complete.

\end{proof} 
   
The asymptotic expansion of $\frac{\delta_1{\mathcal{B}}_{g,n}(\mathbf{d})}{V_{g,n}}$ is expressed as the product of the two expansions we computed and therefore it admits a $(s+1)$-th expansion thanks to Remark~\ref{rem:prodexp}: 

\begin{align*}
    \frac{\delta_1{\mathcal{B}}_{g,n}(\mathbf{d})}{V_{g,n}}&=\frac{1}{4\pi^2 (2g-3+n)}\cdot \frac{4 \pi^2 (2g-3+n)V_{g-1,n+1}}{V_{g-1,n+2}}\cdot \frac{V_{g-1,n+2}}{V_{g,n}}\cdot\frac{\delta_1{\mathcal{B}}_{g,n}(\mathbf{d})}{V_{g-1,n+1}}\\
    &=\left(\frac{\mathfrak{b}_{n}^{(1)}}{g}+..\! +\frac{\mathfrak{b}_{n}^{(s+1)}}{g^{s+1}}+\mathcal{O}_{n,s}\left(\frac{1}{g^{s+2}}\right)\right)\\
    &\quad\cdot \left(e_{\delta_1\mathcal{B}_n/V_{n+1}}^{(0)}(\mathbf{d})\!+\!\frac{e_{\delta_1\mathcal{B}_n/V_{n+1}}^{(1)}(\mathbf{d})}{g}\!+..\!+\frac{e_{\delta_1\mathcal{B}_n/V_{n+1}}^{(s)}(\mathbf{d})}{g^{s}}\!+\!\mathcal{O}_{n,s}\left(\frac{|\mathbf{d}|^{2s+3}}{g^{s+1}}\right)\right)\\
    &=\frac{e_{\delta_1\mathcal{B}_n/V_{n}}^{(1)}(\mathbf{d})}{g}+\ldots +\frac{e_{\delta_1\mathcal{B}_n/V_{n}}^{(s+1)}(\mathbf{d})}{g^{s+1}}+\mathcal{O}_{n,s}\left(\frac{|\mathbf{d}|^{2s+3}}{g^{s+2}}\right)
\end{align*} 

where 
\begin{align*}
    e_{\delta_1\mathcal{B}_n/V_{n}}^{(s+1)}(\mathbf{d})&=\sum_{r=0}^{s}e_{\delta_1\mathcal{B}_n/V_{n+1}}^{(r)}(\mathbf{d})\mathfrak{b}_{n}^{(s+1-r)}.
\end{align*}
We rearrange the decompositions in polynomials and partial polynomials of $e_{\delta_1\mathcal{B}_n/V_{n+1}}^{(r)}$ for $r\leq s$ to obtain the following decomposition: 
\begin{align*}
    e_{\delta_1\mathcal{B}_n/V_{n}}^{(s+1)}=q_{\delta_1\mathcal{B}_n/V_{n}}^{(s+1)}+\sum_{J\subsetneq\{\mathbf{n}\}} q_{\delta_1\mathcal{B}_n/V_{n},J}^{(s+1)}
\end{align*} 
with $q_{\delta_1\mathcal{B}_n/V_{n}}^{(s)}$ a polynomial of degree $2s+1$ and $q_{\delta_1\mathcal{B}_n/V_{n},J}^{(s)}$ a $J$-partial polynomial of degree $\mathrm{deg}(q_{\delta_1\mathcal{B}_n/V_{n},J}^{(s+1)})\leq 2s-3$ and with support included in $\{\mathbf{d}\in\mathbb{N}_+^{n-|J|}\lvert|\mathbf{d}|<2s-2\}$. The dominant term is:
\begin{align*}
    e_{\delta_1\mathcal{B}_n/V_{n}}^{(s+1)}(\mathbf{d})&=q_{\delta_1\mathcal{B}_n/V_{n+1}}^{(s)}(\mathbf{d})\cdot\mathfrak{b}_{n}^{(1)}+\mathcal{O}_{n,s}\left(|\mathbf{d}|^{2s-1}\right)\\
    &=\frac{-1}{\pi^{2}}\cdot\left(\frac{-1}{\pi^{2}}\right)^{s}\sum_{i_1+\ldots+i_n=2s}\frac{(2s-1)!!}{(i_1+1)!\ldots i_n!}\cdot d_1^{i_1+1} d_2^{i_2}\ldots d_n^{i_n}+\mathcal{O}_{n,s}\left(|\mathbf{d}|^{2s}\right).
\end{align*}

\subsection{Contribution of the third term $\frac{\delta_1\mathcal{C}_{g,n}(\mathbf{d})}{V_{g,n}}$.}

Until now we have seen that the decomposition in polynomials and partial polynomials is well preserved in the expansion of $\frac{\delta_1\mathcal{A}_n}{V_{g,n}}$ and $\frac{\delta_1\mathcal{B}_n}{V_{g,n}}$ but we don't see how this partial polynomials appear. In fact they come from the third term in the topological recursion. 

\begin{claim}\label{claim:C}
The derivative of the third term of the topological recursion $\frac{\delta_1\mathcal{C}_{g,n}(\mathbf{d})}{V_{g,n}}$ admits an expansion: 
\begin{align*}
    \frac{\delta_1\mathcal{C}_n(\mathbf{d})}{V_{g,n}}=e_{\delta_1\mathcal{C}_n}^{(0)}(\mathbf{d})+\ldots +\frac{e_{\delta_1\mathcal{C}_n}^{(s+1)}(\mathbf{d})}{g^{s+1}}+\mathcal{O}_{n,s}\left(\frac{|\mathbf{d}|^{2s}}{g^{s+2}}\right)
\end{align*}
where
$$e_{\delta_1\mathcal{C}_n}^{(s+1)}=q_{\delta_1\mathcal{C}_n}^{(s+1)}+\sum_{J\subsetneq\{\mathbf{n}\}}q_{\delta_1\mathcal{C}_n,J}^{(s+1)}$$
with $q_{\delta_1\mathcal{C}_n}^{(s+1)}$ polynomial of degree $2s-2$ and $q_{\delta_1\mathcal{C}_n,J}^{(s+1)}$ $J$-partial polynomials of degree at most $2s-2$ and support $\{\mathbf{d}\in\mathbb{N}^{n-|J|}\lvert |\mathbf{d}|<2s-2\}.$
\end{claim}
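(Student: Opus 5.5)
We write the derivative of the third term as in \cite{MZ}:
\begin{align*}
\frac{\delta_1\mathcal{C}_{g,n}(\mathbf{d})}{V_{g,n}}=16\sum_{\substack{g_1+g_2=g\\ I\amalg J=\{2,\ldots,n\}}}\sum_{i=0}^{\infty}\sum_{\substack{k_1+k_2=\\ d_1+i-2}}(a_{i-1}-a_i)\frac{V_{g_1,|I|+1}V_{g_2,|J|+1}}{V_{g,n}}\cdot\frac{[\tau_{k_1}\tau_{\mathbf{d}_I}]_{g_1,|I|+1}}{V_{g_1,|I|+1}}\cdot\frac{[\tau_{k_2}\tau_{\mathbf{d}_J}]_{g_2,|J|+1}}{V_{g_2,|J|+1}},
\end{align*}
and split the sum over $g_1$ according to the Euler characteristic of the smaller piece. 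By Lemma~\ref{lem:sumratioMZ}, the terms where both $2g_1+|I|+1$ and $2g_2+|J|+1$ exceed a threshold $m=m(s)$ contribute $\mathcal{O}_{n,s}(g^{-s-2})$ times a bound on the intersection ratios. These ratios are bounded uniformly by a constant, for instance by comparing $[\tau_{\mathbf{d}}]_{g,n}\le V_{g,n}$ via the $\kappa_1$ definition. The sums over $k_1,k_2$ and $i$ then have to be controlled. Note that $[\tau_{k_1}\tau_{\mathbf{d}_I}]_{g_1,\cdot}$ vanishes unless $k_1+|\mathbf{d}_I|\le 3g_1-2+|I|$, so for $g_1$ bounded the convolution runs only over bounded $k_1$. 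This is why this term carries no large power of $|\mathbf{d}|$.

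It remains to treat the finitely many configurations with $g_1\le m$ (and symmetrically $g_2\le m$). On the small piece, $[\tau_{k_1}\tau_{\mathbf{d}_I}]_{g_1,|I|+1}/V_{g_1,|I|+1}$ is a fixed finite table supported on $k_1+|\mathbf{d}_I|\le 3g_1-2+|I|$. On the large piece $(g-g_1,|J|+1)$, I apply the induction hypothesis to $[\tau_{k_2}\tau_{\mathbf{d}_J}]_{g-g_1,|J|+1}/V_{g-g_1,|J|+1}$ with $k_2=d_1+i-2-k_1$. I re-expand in $1/g$ both the prefactor $V_{g_1,|I|+1}V_{g-g_1,|J|+1}/V_{g,n}$, using Theorem~\ref{thm:expratios} and Corollary~\ref{cor:ratio}, which gives leading order $g^{-(2g_1+|I|-1)}$, and the shift $g\mapsto g-g_1$ in the coefficients $e^{(r)}/(g-g_1)^r$.

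The product falls under Remark~\ref{rem:prodexp}. At order $g^{-(s+1)}$, a configuration of order $h=2g_1+|I|-1\ge 1$ contributes $e^{(r)}_{|J|+1}$ with $r\le s+1-h\le s$, a polynomial of degree at most $2s$ in $(k_2,\mathbf{d}_J)$. Substituting $k_2=d_1+i-2-k_1$ and summing over the finitely many $k_1$ keeps it polynomial in $(d_1+i,\mathbf{d}_J)$, and Corollary~\ref{cor:polandai} removes $i$. The result is a function with compact support in $\mathbf{d}_I$ (from the small-piece table) times a polynomial in $(d_1,\mathbf{d}_J)$, that is, a $J\cup\{1\}$-partial polynomial in the sense of Remark~\ref{rem:partpol}. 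When $I=\emptyset$ the result is a genuine polynomial, which goes into $q^{(s+1)}_{\delta_1\mathcal{C}_n}$.

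The degree and support bookkeeping is the point where the exponents must be tracked. The small piece must have $h\ge 2$ for $\mathbf{d}_I$ to matter; $h=1$ corresponds to $(g_1,|I|)=(0,2)$ or $(1,0)$. This gives degree $\le 2(s+1-h)\le 2s-2$, and support $|\mathbf{d}_I|\le 3g_1-2+|I|<2s-2$ since $h\le s+1$. The boundary cases with small $k_1<2$ or $d_1+i-2<k_1$ have to be handled as corrections, exactly as in the third case of Claim~\ref{claim:B2}. These corrections again yield partial polynomials with support cut off by $d_1+i<$ const. The main obstacle I expect is the error term. The claimed $\mathcal{O}(|\mathbf{d}|^{2s}/g^{s+2})$ requires the tail over large $g_1$ to be controlled uniformly in $\mathbf{d}$, and the induction remainders $\mathcal{O}(|\mathbf{d}|^{2(s+2-h)}/g^{s+2})$ at the optimal truncation $r=s+1-h$ to be summed over $h\ge 2$. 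I would first try truncating the large-piece expansion at order $s+1-h$ and using the remainder of order $(s+2-h)$, whose degree $2(s+2-h)\le 2s$ matches the claim.
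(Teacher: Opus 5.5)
Your route is the paper's: the same decomposition of $\delta_1\mathcal{C}_{g,n}/V_{g,n}$, Lemma~\ref{lem:sumratioMZ} to discard splittings where both pieces are large, expansion of the volume prefactor for the finitely many splittings with a bounded piece, the induction hypothesis on the large piece, and the vanishing of $[\tau_{k_1}\tau_{\mathbf{d}_I}]_{g_1,|I|+1}$ for $k_1+|\mathbf{d}_I|>3g_1-2+|I|$. That vanishing makes the $k_1$-sum finite and produces the compactly supported factors.

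The gap is the one exponent on which the whole degree count rests. For bounded $g_1$, the prefactor $V_{g_1,|I|+1}V_{g-g_1,|J|+1}/V_{g,n}$ has order $g^{-(2g_1+|I|)}$, not $g^{-(2g_1+|I|-1)}$. By Corollary~\ref{cor:ratio}, $\chi(S_{g-g_1,n-|I|})-\chi(S_{g,n})=2g_1+|I|$; for example $V_{1,1}V_{g-1,n}/V_{g,n}\asymp g^{-2}$ and $V_{0,3}V_{g,n-2}/V_{g,n}\asymp g^{-2}$. Since the small piece is hyperbolic, $2g_1+|I|+1\ge 3$, so \emph{every} configuration has order $h=2g_1+|I|\ge 2$. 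This is the paper's remark that $\delta_1\mathcal{C}_{g,n}/V_{g,n}=\mathcal{O}_n(g^{-2})$. It is also what lets the paper use only $e^{(r)}_{|J|+1}$ with $r\le s+1-2g_1-|I|\le s-1$.

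With your exponent the argument does not close, and your exclusion of $h=1$ is not valid. The splittings you label $h=1$, namely $(g_1,|I|)=(1,0)$ and $(0,2)$, do occur. The first has $I=\emptyset$ and feeds directly into the main polynomial $q^{(s+1)}_{\delta_1\mathcal{C}_n}$. Under your count it would contribute $e^{(s)}_{n}$, of degree $2s$, contradicting the claimed degree $2s-2$. The second does involve $\mathbf{d}_I$, pinned at $\mathbf{d}_I=0$ since $[\tau_{k_1}\tau_{d}\tau_{d'}]_{0,3}=0$ unless all indices vanish. Your remainder bound $2(s+2-h)\le 2s$ likewise silently needs $h\ge 2$.

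Once $h$ is corrected to $2g_1+|I|$, your bookkeeping goes through. At order $g^{-(s+1)}$ only $r\le s-1$ appears. The finite $k_1$-sum and the $i$-sum (Corollary~\ref{cor:polandai}) keep the degree $\le 2s-2$, and the remainders have degree $2(s+2-h)\le 2s$.

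Separately, the support inequality $3g_1+|I|-2<2s-2$ does not follow from $2g_1+|I|\le s+1$ for small $s$. For $s=2$, the splitting $(g_1,|I|)=(1,1)$ allows $|\mathbf{d}_I|=2$, because $[\tau_0\tau_2]_{1,2}\neq 0$. That support constant needs its own justification.
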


Before we go into the proof, we give a short informal explanation of the apparition of partial polynomials and the lower degree contribution of $\delta_1\mathcal{C}_n(\mathbf{d})/V_{g,n}$. Topologically, the third term corresponds to the case where by taking out a pair of pants the surface is disconnected, see $(C)$ on figure~\ref{fig:recursion}. It is much more rare than to keep the surface connected, and it is even more unlikely that by doing so we disconnect the surface in two sub-surfaces of more or less equal size. Most probably we get on one side a one handed torus or a single pair of pants, and the rest of the topology on the other side. As $g$ goes to infinity, most probably only one of the two subsurface also has its genus going to infinity. Only these cases contribute to the asymptotic expansion. By separating the surface, we have created a partition of boundary components in $I\sqcup J=\{2,\ldots n\}$. Let $I$ be the indices of the boundary components of the subsurface which remains of bounded topological complexity. Now if we look at the intersection numbers associated to the boundary components in $I$, they take only finitely many values, because the dimension of the modular space they live in is finite. They give rise to the functions of compact support in $J$-partial polynomials. 
\begin{proof}[Proof of Claim~\ref{claim:C}]

The term $\frac{\delta_1{\mathcal{C}}_{g,n}(\mathbf{d})}{V_{g,n}}$ is defined by:
    
%FIGURES !!!!!
\begin{align*}
        \frac{\delta_1{\mathcal{C}}_{g,n}(\mathbf{d})}{V_{g,n}} &= 16\sum_{\genfrac{}{}{0pt}{}{g_1+g_2=g}{I\amalg J=\{2,\ldots, n\}}} \sum_{i=0}^{\infty} \sum_{\genfrac{}{}{0pt}{}{k_{1}+k_{2}=}{=i+d_{1}-2}} (a_{i-1}-a_{i}) \\
        &\hspace*{4cm}\cdot\frac{\left[\tau_{k_{1}} \prod_{i\in I } \tau_{d_{i}}\right]_{g_1,|I|+1} \cdot \left[\tau_{k_{2}} \prod_{j\in J} \tau_{d_{j}}\right]_{g_2,|J|+1}}{V_{g,n}}\\
\end{align*}

Only a finite number of partitions of the genus and the boundary components contribute to the $s$-th order expansion:  

\begin{lem}\label{lem:sumpartition}
\begin{align*}
    \frac{\delta_1{\mathcal{C}}_{g,n}(\mathbf{d})}{V_{g,n}}&= 32\cdot\sum_{\substack{g_1+g_2=g\\I\amalg J=\{2,\ldots, n\}\\2g_1+|I|+1\leq s}}\frac{V_{g-g_1,n-|I|}}{V_{g,n}} \cdot \sum_{i=0}^{\infty} \sum_{\genfrac{}{}{0pt}{}{k_{1}+k_{2}}{=i+d_{1}-2}}(a_{i-1}-a_{i})\\
    &\hspace*{2cm}\cdot\left[\tau_{k_{1}} \prod_{i\in I } \tau_{d_{i}}\right]_{g_1,|I|+1}\cdot\frac{\left[\tau_{k_{2}} \prod_{j\in J} \tau_{d_{j}}\right]_{g-g_1,|J|+1}}{V_{g-g_1,\lvert J\rvert+1}}+\mathcal{O}_{n,s}\left({\frac{1}{g^{s}}}\right).
\end{align*}
\end{lem}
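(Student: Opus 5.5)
The plan is to start from the exact expression for $\delta_1\mathcal{C}_{g,n}(\mathbf{d})/V_{g,n}$ and sort the partitions $(g_1,I)\sqcup(g_2,J)$ according to the Euler characteristic of the two pieces. The sum is symmetric under exchanging the roles of $(g_1,I,k_1)$ and $(g_2,J,k_2)$. Therefore, whenever one piece has small complexity, we may assume it is the first one, at the cost of the factor $2$; this turns $16$ into $32$. The boundary case where both pieces are small only occurs for bounded $g$, so it is irrelevant asymptotically.

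The argument has three steps.

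\textbf{Step 1 (bounding the intersection numbers).} Every normalized intersection number is bounded by the corresponding volume:
\[
[\tau_{k}\prod_{i\in I}\tau_{d_i}]_{g_1,|I|+1}\leq V_{g_1,|I|+1}.
\]
This follows from Mirzakhani's coefficient formula, since $V_{g,n}(2\mathbf{x})$ dominates each coefficient after normalization; equivalently one can use the known monotonicity $[\tau_{\mathbf{d}}]\leq V$, which is part of the base case in \cite{AM22}. Consequently each product term is at most $V_{g_1,|I|+1}V_{g_2,|J|+1}/V_{g,n}$. The sums over $k_1+k_2=i+d_1-2$ and over $i$ have at most $d_1+i$ terms, and they are weighted by $(a_{i-1}-a_{i})$. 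Lemma~\ref{lem:sumoveri} makes the $i$-sum converge, so we only pick up a polynomial factor in $d_1$.

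\textbf{Step 2 (discarding the balanced partitions).} Apply Lemma~\ref{lem:sumratioMZ} with $n_1=|I|+1$ and $n_2=|J|+1$, so that $n_1+n_2=n+1$, summing over all $g_1+g_2=g$ with $2g_i+n_i\geq s$. We use $V_{g,n+1}$ in place of $V_{g,n}$ in the denominator and relate the two by Theorem~\ref{thm:expratios}, which introduces a factor of order $g$. Choosing the threshold appropriately, say $2g_i+n_i> s+1$, the total contribution of these partitions is $\mathcal{O}_{n,s}(g^{-s})$. There are finitely many $I$, so summing over them costs only a constant depending on $n$.

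\textbf{Step 3 (normalizing the large piece).} For the remaining partitions, $2g_1+|I|+1\leq s$. We divide the large factor by $V_{g-g_1,|J|+1}$ and multiply the ratio $V_{g-g_1,|J|+1}/V_{g,n}$ back in. The statement writes this ratio as $V_{g-g_1,n-|I|}/V_{g,n}$, and since $|J|+1=n-|I|$ these coincide. The small factor is kept unnormalized; it takes only finitely many values because $k_1\leq 3g_1-2+|I|$.

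The main obstacle is making the error term in Step 2 genuinely uniform in $\mathbf{d}$. Lemma~\ref{lem:sumratioMZ} controls volumes, but the sums over $k_1,k_2,i$ introduce a factor growing like $d_1$. To handle this, I would first try to absorb the $k$-sum using the vanishing from Proposition~\ref{prop:internbMZ}: $k_1$ is bounded by the dimension of the small piece, and $k_2$ is determined by $k_1$ and $i$. I would then use the fact that $\sum_i (a_{i-1}-a_i)=\tfrac12$. If a residual factor of $|\mathbf{d}|$ remains, it can be tolerated by taking the threshold in Lemma~\ref{lem:sumratioMZ} one step larger, and it is consistent with the polynomial error terms allowed later in Claim~\ref{claim:C}.
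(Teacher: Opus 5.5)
Your Steps 1--3 reproduce the paper's proof. You bound each product of intersection numbers by $V_{g_1,|I|+1}V_{g_2,|J|+1}$, apply Lemma~\ref{lem:sumratioMZ} with $n_1+n_2=n+1$, pay $V_{g,n+1}/V_{g,n}=\mathcal{O}_n(g)$, symmetrize to get the factor $32$, and use $|J|+1=n-|I|$. The bound you need is Mirzakhani's inequality $[\tau_{\mathbf d}]_{g,n}\le V_{g,n}$, which you cite and the paper uses tacitly; your other justification, that $V_{g,n}(2\mathbf x)$ ``dominates each coefficient'', does not prove it. The paper's proof simply drops the sums over $i$ and over $k_1+k_2=i+d_1-2$. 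So the uniformity problem you raise is genuine and is not treated there, but your proposed repair does not close it as written.

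\textbf{Why the repair fails.} The vanishing bound $k_1\le 3g_1-2+|I|$ only helps for the partitions you keep. In the discarded balanced partitions both genera can be of order $g$, so it gives nothing there. What vanishing does give in general is this: a term is nonzero only if $i+|\mathbf d|\le 3g-3+n$. Hence for each $i$ at most $3g+n$ pairs $(k_1,k_2)$ contribute. Also $\sum_{i\ge 0}|a_{i-1}-a_i|=1$, since $a_i$ increases from $a_{-1}=0$ to $1$. So the balanced part carries an extra factor $\mathcal{O}_n(g)$, uniformly in $\mathbf d$.

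\textbf{The off-by-one.} With the threshold $2g_i+n_i\ge s+1$ that the statement requires, the extra factor only gives $\mathcal{O}(g^{1-s})$. Raising the threshold to $s+2$, as you propose, gives $\mathcal{O}(g\cdot g\cdot g^{-s-2})=\mathcal{O}(g^{-s})$. But then the kept partitions satisfy $2g_1+|I|+1\le s+1$, not $\le s$ as you assert in Step 3, so your Steps 2 and 3 are inconsistent.

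\textbf{The missing step.} You must show that the extra layer $2g_1+|I|+1=s+1$ is itself $\mathcal{O}_{n,s}(g^{-s})$ uniformly in $\mathbf d$.
\begin{itemize}
\item On this layer $g_1$ is bounded, so by vanishing only boundedly many $k_1$ contribute.
\item Each term is therefore at most $16(3g_1-1+|I|)V_{g_1,|I|+1}\cdot V_{g-g_1,n-|I|}/V_{g,n}$.
\item By Corollary~\ref{cor:ratio}, $V_{g-g_1,n-|I|}/V_{g,n}=\mathcal{O}(g^{-(2g_1+|I|)})=\mathcal{O}(g^{-s})$.
\item There are finitely many such $(g_1,I)$, and likewise for $(g_2,J)$.
\end{itemize}
This gives the lemma with an error independent of $\mathbf d$. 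Settling for an error polynomial in $|\mathbf d|$, as your last sentence suggests, would not prove the statement as written.
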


\begin{proof}[Proof of Lemma~\ref{lem:sumpartition}]

This is a direct consequence of Lemma~\ref{lem:sumratioMZ}. In fact: 
\begin{equation*}
    \frac{\left[\tau_{k_{1}} \prod_{i\in I } \tau_{d_{i}}\right]_{g_1,|I|+1} \cdot \left[\tau_{k_{2}} \prod_{i\in J} \tau_{d_{i}}\right]_{g_2,|J|+1}}{V_{g,n+1}}\leq\frac{V_{g_1,\lvert I\rvert +1} \cdot V_{g_2,\lvert J\rvert+1}}{V_{g,n+1}}.
\end{equation*}

Therefore, following Lemma~\ref{lem:sumratioMZ}: 
\begin{equation*}
     \sum_{I\sqcup J=\{\mathbf{n}\}}\sum_{\genfrac{}{}{0pt}{2}{g_1+g_2=g}{2g_i+n_i+1\geq s+1}}\frac{\left[\tau_{k_{1}}\cdot \prod_{i\in I } \tau_{d_{i}}\right]_{g_1,|I|+1} \cdot \left[\tau_{k_{2}} \prod_{i\in J} \tau_{d_{i}}\right]_{g_2,|J|+1}}{V_{g,n+1}}= \mathcal{O}_{n,s}\left(\frac{1}{g^{s+1}}\right)
\end{equation*}
where $n_1 = \lvert I \rvert$ and $n_2 = \lvert J\lvert$.

Finally, the expansion of the ratio is:
$$\frac{V_{g,n+1}}{V_{g,n}}=\mathcal{O}_{n}\left(g\right).$$

To compute the expansion of $\frac{\delta_1{\mathcal{C}}_{g,n}(\mathbf{d})}{V_{g,n}}$ up to order $s+1$, we only need to consider the cases for which either $2g_1+\lvert I\rvert +1\leq s$ or $2g_2+\lvert J\rvert +1\leq s$. By symmetry we consider only $2g_1+\lvert I\rvert +1\leq s$ and add a factor $2$. The proof of Lemma~\ref{lem:sumpartition} is complete.
\end{proof}

\begin{rem}
As a direct consequence, the third term does not contribute to the first order expansion: 
\begin{equation*}
    \frac{\delta_1{\mathcal{C}}_{g,n}(\mathbf{d})}{V_{g,n}}=\mathcal{O}_{n}\left(\frac{1}{g^{2}}\right).
\end{equation*}
In fact, in the topological recursion, the sub-surfaces $S_i$ have to be hyperbolic, ie $2g_i+n_i+1-2\geq 1$. Therefore one does not need to take into account the third term to compute the first-order expansion \cite{AM22}. 
\end{rem}

Let us fix a partition of $g$ and of the boundary components. On one hand, one can compute the expansion of the factors that do not depend on $\mathbf{d}$: 
$$\frac{V_{g-g_1,n-|I|}}{V_{g,n}}.$$

It can be written as a telescoping product:
\begin{align*}
\frac{V_{g-g_1,n-|I|}}{V_{g,n}}= \prod_{j=-|I|}^{2g_1-1}& \frac{4 \pi^2 (2(g-g_1)+n-j+1)V_{g-g_1,n+j}}{V_{g-g_1,n+j+1}}\\
\prod_{j=1}^{g_1} \frac{V_{g-j,n+2j}}{V_{g-j+1,n+2j-2}} &\cdot  \prod_{j=-|I|}^{2g_1-1} \frac{1}{(4 \pi^2 (2(g-g_1)+n-j+1))}.
\end{align*}
  
The last product has $2g_1+|I|\geq 2$ terms. Thus:
$$\frac{V_{g-g_1,n-|I|}}{V_{g,n}}=\mathcal{O}_{n,g_1,|I|}\left({\frac{1}{g^{2g_1+|I|}}}\right).$$
Therefore, knowing expansions of $V_{g,n}/V_{g-1,n+2}$ and $4\pi^2 (2g-2+n)V_{g,n}/V_{g,n+1}$ up to order $s+1-2g_1-|I|\leq s-1$ suffice to determine the expansion of $V_{g-g_1,n-|I|}/V_{g,n}$ up to order $s+1$:

$$\frac{V_{g-g_1,n-|I|}}{V_{g,n}}=\frac{\mathfrak{c}_{n,g_1,n_1}^{(2g_1+n_1)}}{g^{2g_1+|I|}}+\ldots+\frac{\mathfrak{c}_{n,g_1,n_1}^{(s+1)}}{g^{s+1}}+\mathcal{O}_{n,g_1,|I|}\left(\frac{1}{g^{s+2}}\right).$$

On the other hand, we substitute to $\left[\tau_{k_{2}} \prod_{j\in J} \tau_{d_{j}}\right]_{g_2,n_2+1}/V_{g_2,n_2+1}$ its expansion up to order $s+1-2g_1-n_1\leq s-1$ and obtain:
\begin{align*}
    \sum_{i=0}^{\infty} \sum_{\genfrac{}{}{0pt}{}{k_{1}+k_{2}}{=i+d_{1}-2}}(a_{i-1}-&a_{i})
    \cdot\left[\tau_{k_{1}} \prod_{i\in I } \tau_{d_{i}}\right]_{g_1,|I|+1}\cdot\frac{\left[\tau_{k_{2}} \prod_{j\in J} \tau_{d_{j}}\right]_{g-g_1,|J|+1}}{V_{g-g_1,\lvert J\rvert+1}}=\\
    &e_{\delta\mathcal{C},g_1,J}^{(0)}(\mathbf{d})+\frac{e_{\delta\mathcal{C},g_1,J}^{(1)}(\mathbf{d})}{g}+\ldots +\frac{e_{\delta\mathcal{C},g_1,J}^{(s-1)}(\mathbf{d})}{g^{s-1}}+\mathcal{O}_{n,g_1,|I|}\left(\frac{|\mathbf{d}|^{2s}}{g^{s}}\right)
\end{align*}

The error term of this expansion is polynomial of degree $2s$ in $\mathbf{d}$. By induction hypothesis, the error term of the $s+1-2g_1-n_1\leq s-1$-th order expansion of $\frac{\left[\tau_{k_{2}} \prod_{j\in J} \tau_{d_{j}}\right]_{g_2,n_2+1}}{V_{g_2,n_2+1}}$ is polynomial in $(k_2,\mathbf{d}_J)$ of degree $2(s+1-2g_1-n_1+1)\leq 2s$.
Neither the sum over $k_1+k_2$ nor over $i$ change the degree of the polynomial bound. 

\begin{claim}\label{claim:C1}

The $s-1$-th term of this expansion $e_{\delta\mathcal{C},g_1,J}^{(s-1)}$ verifies: 
\begin{align*}
    e_{\delta\mathcal{C},g_1,J}^{(s-1)}&=q_{\delta\mathcal{C},g_1,J}^{(s-1)}+\sum_{J'\subsetneq J}q_{\delta\mathcal{C},g_1,J'}^{(s-1)}
\end{align*}
with $q_{\delta\mathcal{C},g_1,J}^{(s-1)}$ a polynomial if $J=\{\mathbf{n}\}$ and a $J$-partial polynomial else, both of degree $2s-2$, and $q_{\delta\mathcal{C},g_1,J'}^{(s-1)}$ a $J'$-partial polynomial of degree at most $2s-6$. 
\end{claim}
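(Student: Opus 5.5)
We fix $g_1$ and $I\sqcup J=\{2,\ldots,n\}$ with $2g_1+|I|+1\leq s$. We substitute the decomposition of the coefficient $e_{|J|+1}^{(s-1)}$ supplied by the induction hypothesis of Theorem~\ref{thm:internb}. It is a main polynomial $q_{|J|+1}^{(s-1)}$ of degree $2s-2$ plus $J''$-partial polynomials $q_{|J|+1,J''}^{(s-1)}$ of degree at most $2s-6$ and bounded support. We substitute it into
$$\sum_{i\ge0}\sum_{k_1+k_2=i+d_1-2}(a_{i-1}-a_i)\Bigl[\tau_{k_1}\prod_{i\in I}\tau_{d_i}\Bigr]_{g_1,|I|+1}\, e_{|J|+1}^{(s-1)}(k_2,\mathbf d_J),$$
and treat each piece separately, exactly as in Claims~\ref{claim:A2} and~\ref{claim:B2}.

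The key observation concerns the factor $[\tau_{k_1}\prod_{i\in I}\tau_{d_i}]_{g_1,|I|+1}$. By Proposition~\ref{prop:internbMZ}(3) it vanishes unless $k_1+|\mathbf d_I|\leq 3g_1-2+|I|$, which is a bounded quantity depending only on $s$. Hence, as a function of $(k_1,\mathbf d_I)$, it has finite support. The sum over $k_1$ is therefore finite, and we may write $k_2=d_1+i-2-k_1$.

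Feeding in the main polynomial gives $q_{|J|+1}^{(s-1)}(d_1+i-2-k_1,\mathbf d_J)$, which is a polynomial of degree $2s-2$ in $(d_1,\mathbf d_J,i)$ for each fixed $k_1$. Corollary~\ref{cor:polandai} removes $i$ without raising the degree. We then multiply by the compactly supported weight in $\mathbf d_I$ and sum over the finitely many $k_1$. The result is $c(\mathbf d_I)\,p(d_1,\mathbf d_J)$, that is, a $(\{1\}\cup J)$-partial polynomial of degree $2s-2$. When $I=\emptyset$ (allowed only if $g_1\geq1$) this is a genuine polynomial. This gives $q^{(s-1)}_{\delta\mathcal C,g_1,J}$, with $J$ understood as $\{1\}\cup J$ in the indexing of the statement.

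The one subtlety is the boundary effect near $k_2\ge 0$. When $d_1+i-2<k_1$ the term is absent, whereas the polynomial formula would include it. This discrepancy is a correction supported on $d_1+i$ bounded. After summing over $i$, only finitely many $i$ contribute, and it becomes a function of compact support in $d_1$ (and $\mathbf d_I$) times a polynomial in $\mathbf d_J$. It is thus a $J'$-partial polynomial with $1\notin J'$. The same mechanism handled the third case of Claim~\ref{claim:B2}, and I would reuse that argument.

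Next come the partial pieces $q_{|J|+1,J''}^{(s-1)}$ with $J''\subsetneq\{1\}\cup J$. If the index of $k_2$ lies in $J''$, the same computation yields a $J'$-partial polynomial with $J'=\{1\}\cup(J''\cap J)\subsetneq\{1\}\cup J$, of degree at most $2s-6$. This uses Remark~\ref{rem:partpol}, and the sum over $i$ does not increase the degree. If the index of $k_2$ is not in $J''$, the compact support forces $d_1+i-2-k_1$ to be bounded. So $i$ and $d_1$ range over finite sets, $d_1$ becomes a compact-support variable, and we obtain a $J'$-partial polynomial with $J'=J''\cap J$, again of degree at most $2s-6$. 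All the supports remain bounded in terms of $s$, since they are translates of bounded sets by bounded shifts.

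The main obstacle is bookkeeping rather than analysis. One must check that the boundary corrections from truncating $k_2\geq0$ and the $a_{-1}=0$ convention produce only terms of degree at most $2s-6$, or else absorb them into the leading partial polynomial. The concern is that the degree-$(2s-2)$ main polynomial composed with the truncation could leak degree $2s-2$ into $J'\subsetneq J$ pieces. I would handle this by noting that the truncation correction equals a finite sum over the bounded range of $d_1+i$. In that range $d_1$ has compact support, so the only surviving polynomial variables are $\mathbf d_J$. If the degree in $\mathbf d_J$ there exceeds $2s-6$, I would reassign that piece to the statement's leading term $q_{\delta\mathcal C,g_1,J}^{(s-1)}$, taken as a $J$-partial polynomial, which is allowed degree $2s-2$.
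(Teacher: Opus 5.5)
Your route is the paper's. You substitute the inductive decomposition of $e^{(s-1)}_{|J|+1}$, use the vanishing of $[\tau_{k_1}\prod_{m\in I}\tau_{d_m}]_{g_1,|I|+1}$ for $k_1+|\mathbf d_I|>3g_1+|I|-2$ to get compact support in $(k_1,\mathbf d_I)$, and run the sums over $k_1+k_2$ and $i$ as in the third case of Claim~\ref{claim:B2}. Your treatment of the pieces coming from the $q^{(s-1)}_{|J|+1,J''}$ is fine. The step that fails is the last one, and your worry there is justified. The truncation correction of the main-polynomial piece is $-\sum_{k_1}\sum_{0\le i\le k_1+1-d_1}(a_{i-1}-a_i)\,[\tau_{k_1}\prod_{m\in I}\tau_{d_m}]_{g_1,|I|+1}\,q^{(s-1)}_{|J|+1}(d_1+i-2-k_1,\mathbf d_J)$. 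Its part of degree $2s-2$ in $\mathbf d_J$ is the $k_2$-free top component $(-1/\pi^{2})^{s-1}\sum_{\sum_{j\in J}i_j=2s-2}\frac{(2s-3)!!}{\prod_{j\in J}i_j!}\prod_{j\in J}d_j^{i_j}$ of $q^{(s-1)}_{|J|+1}$. It is multiplied by the scalar $\sum_{k_1}\sum_{0\le i\le k_1+1-d_1}(a_i-a_{i-1})[\tau_{k_1}\prod_{m\in I}\tau_{d_m}]_{g_1,|I|+1}$. All summands of this scalar are nonnegative, since $a_i>a_{i-1}$ and intersection numbers are nonnegative. At $d_1=0$, $\mathbf d_I=0$ the summand $k_1=i=0$ alone equals $\tfrac{1}{2}V_{g_1,|I|+1}>0$. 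The partial-polynomial inputs have degree $\le 2s-6$ and cannot cancel it. So for $J\neq\emptyset$ the correction is a $J$-partial polynomial supported in $d_1+|\mathbf d_I|\le 3g_1+|I|-1$ whose degree is exactly $2s-2$.

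Such a term fits neither slot of the statement. It is not of degree $\le 2s-6$, and it cannot be merged into $c(\mathbf d_I)\,p(d_1,\mathbf d_J)$. Take all $d_j$, $j\in J$, large. In any decomposition of the asserted form, the coefficient of a top monomial in $\mathbf d_J$ then comes only from the $(\{1\}\cup J)$-part, so it is polynomial in $d_1$. Here it is instead a polynomial in $d_1$ plus a nonzero function of finite support in $d_1$. Your ``reassignment'' only works by silently switching the leading term from a $(\{1\}\cup J)$-partial polynomial to a $J$-partial one. The first is your own reading, and it is a genuine polynomial when $I=\emptyset$. No single class contains both pieces. So this is not bookkeeping: for $J\neq\emptyset$ the claim as literally stated cannot hold. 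In particular, the leading term is not a polynomial when $I=\emptyset$, $n\ge 2$.

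The same happens, with degree up to $2s-4$, for the coefficients $e^{(r)}_{|J|+1}$ with $r<s-1$. These enter when $g_1\ge 1$, because $(g-g_1)^{-r}$ must be re-expanded in powers of $1/g$, yet you substitute only $e^{(s-1)}_{|J|+1}$. The paper's proof has the same hole. It is hidden by defining $q^{(s-1)}_{\delta\mathcal C,g_1,J}$ as the entire sum, truncated range included.

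What you can prove, and all that Claim~\ref{claim:C} uses, is a three-part decomposition:
\begin{itemize}
\item a $(\{1\}\cup J)$-partial polynomial of degree $\le 2s-2$;
\item a $J$-partial polynomial of degree $\le 2s-2$ supported in $d_1+|\mathbf d_I|\le 3g_1+|I|-1<2s-2$;
\item pieces of degree $\le 2s-6$.
\end{itemize}
State the claim in that form rather than reassigning.
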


\begin{proof}[Proof of Claim~\ref{claim:C1}]
By induction hypothesis we have for any $r\leq s$ $$e_{|J|+1}^{(r)}(k_2,\mathbf{d}_J)=q_{|J|+1}^{(r)}(k_2,\mathbf{d}_J)+\sum_{\substack{\tilde{J'}\subsetneq\{1\ldots |J|+1\}}}q_{|J|+1,J'}(k_2,\mathbf{d}_J).$$

We recall that intersection numbers on $\mathcal{M}_{g,n}$ are non-zero if and only if $|\mathbf{d}|\leq 3g-3+n$, see Proposition~\ref{prop:internbMZ}. On the component of genus $g_1$, with boundary components indexed by $I$, one has $2g_1+|I|+1\leq s$ and: 
$$k_1+\sum_{i\in I}d_i\leq 3g_1-3+|I|+1=3g_1+|I|-2 < 2s-4.$$

Else intersection numbers vanish: for any $k_1+\sum_{i\in I}d_i > 3g_1-3+|I|+1$, $$\left[\tau_{k_{1}} \prod_{i\in I } \tau_{d_{i}}\right]_{g_1,|I|+1}=0.$$ 

Thus $\left[\tau_{k_1}\prod_{i\in I}\tau_{d_i}\right]\cdot q_{|J|+1}^{(s-1)}$ is a $\tilde{J}$-partial polynomial in $\tilde{\mathbf{d}}=(k_1,k_2,d_2,\ldots d_n)$ with $\tilde{\mathbf{d}}_{\tilde{J}}=\{k_2,\mathbf{d}_J\}$. It has degree $2s-2$ and support
$\left\{k_1,\mathbf{d}_I\,|\,k_1+|\mathbf{d}_I|<2s-4\right\}$. 

The sum over $k_1+k_2$ and the sum over $i$ are handled exactly as the partial polynomials for the second term $\mathcal{B}_{g,n}$, when either $1$ or $2\notin J$, see the proof of Claim~\ref{claim:B2} in Section~\ref{sec:B}. One obtains 
\begin{align*}
    q_{\delta\mathcal{C},g_1,J}^{(s-1)}(\mathbf{d}_J)&=\sum_{i=0}^{\infty} \sum_{\genfrac{}{}{0pt}{}{k_{1}+k_{2}}{=i+d_{1}-2}}(a_{i-1}-a_{i})&
    \cdot\left[\tau_{k_{1}} \prod_{i\in I } \tau_{d_{i}}\right]_{g_1,|I|+1}\cdot q_{|J|+1}^{(s-1)}(k_2,\mathbf{d}_J)
\end{align*}
as in Claim~\ref{claim:C1}.

For partial polynomials $q_{|J|+1,\tilde{J'}}(k_2,\mathbf{d}_J)$, the procedure is similar to the proof of Claim~\ref{claim:B2}. One can naturally define $\tilde{\mathbf{d}}_{\tilde{J'}}$ as the subset of $(k_2,\mathbf{d}_J)$ of indices $\tilde{J'}$. Then 
\begin{align*}
    q_{\delta\mathcal{C},g_1,J'}^{(s-1)}(\mathbf{d}_J)&=\sum_{i=0}^{\infty} \sum_{\genfrac{}{}{0pt}{}{k_{1}+k_{2}}{=i+d_{1}-2}}(a_{i-1}-a_{i})&
    \cdot\left[\tau_{k_{1}} \prod_{i\in I } \tau_{d_{i}}\right]_{g_1,|I|+1}\cdot q_{|J|+1,\tilde{J'}}^{(s-1)}(k_2,\mathbf{d}_J)
\end{align*}
is a $J'$-partial polynomial with $J'$ such that $\tilde{\mathbf{d}}_{\tilde{J'}}=(k_2,\mathbf{d}_{J'})$. It has degree at most $2s-6$ and support $\{\mathbf{d}_{\mathbf{n}\setminus J'}||\mathbf{d}_{\mathbf{n}\setminus J'}|<2s-4\}$.

The proof of Claim~\ref{claim:C1} is complete.
\end{proof}

Summing over the partitions, the expansion of $\delta_1\mathcal{C}_n(\mathbf{d})/V_{g,n}$ is then: 

\begin{align*}
    \frac{\delta_1\mathcal{C}_n(\mathbf{d})}{V_{g,n}}=e_{\delta_1\mathcal{C}_n}^{(0)}(\mathbf{d})+\ldots +\frac{e_{\delta_1\mathcal{C}_n}^{(s+1)}(\mathbf{d})}{g^{s+1}}+\mathcal{O}_{n,s}\left(\frac{|\mathbf{d}|^{2s}}{g^{s+2}}\right)
\end{align*}

where: 

\begin{align*}
    e_{\delta_1\mathcal{C}_n}^{(s+1)}(\mathbf{d})=\sum_{\genfrac{}{}{0pt}{}{g_1+g_2=g}{I\amalg J=\{2,\ldots, n\}}}&\sum_{r=0}^{s+1-2g_1-|I|}\mathfrak{c}_{n,g_1,|I|}^{(s+1-r)}\cdot e_{\delta\mathcal{C},g_1,J}^{(r)}. 
\end{align*}

Rearranging polynomials and partial polynomials completes the proof of Claim~\ref{claim:C}.

\end{proof}

\subsection{Discrete integration}
Discrete integration allows us to derive the expansion of intersection numbers from the expansion of their derivative. The symmetry of intersection numbers which we recalled in Proposition~\ref{prop:internbMZ} makes the derivative  along the first variable sufficient. 

\begin{align*}
    \frac{[\tau_{d_{1}}\,\ldots \tau_{d_n}]_{g,n}}{V_{g,n}}-1&= \sum_{i=0}^{d_n-1}\frac{[\tau_{i+1},\tau_0\ldots \tau_0]_{g,n}-[\tau_{i},\tau_0,\,\ldots \tau_{0}]_{g,n}}{V_{g,n}}\\
    &+\sum_{i=0}^{d_{n-1}-1}\frac{[\tau_{i+1},\tau_0,\ldots \tau_0,\tau_{d_n}]_{g,n}-[\tau_i,\tau_0\ldots \tau_{d_n}]_{g,n}}{V_{g,n}}\\
    &+\ldots \\
    &+\sum_{i=0}^{d_1-1}\frac{[\tau_{i+1},\tau_{d_2},\ldots \tau_{d_{n}}]_{g,n}-[\tau_i,\tau_{d_2},\ldots \tau_{d_{n}}]_{g,n}}{V_{g,n}}
\end{align*}
We decompose discrete derivatives in the three terms of topological recursion~\ref{eq:discrderivative}. 
\begin{align*}
    \frac{[\tau_{d_{1}}\,\ldots \tau_{d_n}]_{g,n}}{V_{g,n}}-1&=\sum_{i=0}^{d_n-1}\left(\frac{\delta_1\mathcal{A}_n}{V_{g,n}}+\frac{\delta_1\mathcal{B}_n}{V_{g,n}}+\frac{\delta_1\mathcal{C}_n}{V_{g,n}}\right)(i,0,\ldots 0)\\
    &+\sum_{i=0}^{d_{n-1}-1}\left(\frac{\delta_1\mathcal{A}_n}{V_{g,n}}+\frac{\delta_1\mathcal{B}_n}{V_{g,n}}+\frac{\delta_1\mathcal{C}_n}{V_{g,n}}\right)(i,0,\ldots 0, d_n)\\
    &+\ldots\\
    &+\sum_{i=0}^{d_1-1}\left(\frac{\delta_1\mathcal{A}_n}{V_{g,n}}+\frac{\delta_1\mathcal{B}_n}{V_{g,n}}+\frac{\delta_1\mathcal{C}_n}{V_{g,n}}\right)(i,d_2,\ldots d_{n})\\
\end{align*}

We substitute to $\frac{\delta_1\mathcal{A}_n}{V_{g,n}}$, $\frac{\delta_1\mathcal{B}_n}{V_{g,n}}$ and $\frac{\delta_1\mathcal{C}_n}{V_{g,n}}$ their $s+1$-th order asymptotic expansion. By rearranging sums, one obtains

\begin{align*}
    e_{n}^{(s+1)}(\mathbf{d})
    &=\sum_{i=0}^{d_n-1}\left(e_{\delta_1{\mathcal{A}}_{n}/{V_{n}}}^{(s+1)}+e_{\delta_1\mathcal{B}_n/V_{n}}^{(s+1)}+e_{\delta_1\mathcal{C}_n}^{(s+1)}\right)(i,0,\ldots 0)\\
    &+\sum_{i=0}^{d_{n-1}-1}\left(e_{\delta_1{\mathcal{A}}_{n}/{V_{n}}}^{(s+1)}+e_{\delta_1\mathcal{B}_n/V_{n}}^{(s+1)}+e_{\delta_1\mathcal{C}_n}^{(s+1)}\right)(i,0,\ldots 0, d_n)\\
    &+\ldots\\
    &+\sum_{i=0}^{d_1-1}\left(e_{\delta_1{\mathcal{A}}_{n}/{V_{n}}}^{(s+1)}+e_{\delta_1\mathcal{B}_n/V_{n}}^{(s+1)}+e_{\delta_1\mathcal{C}_n}^{(s+1)}\right)(i,d_2,\ldots d_{n}).
\end{align*}

\begin{claim}\label{claim:s+1}
The $(s+1)$-th coefficient of this expansion admits a decomposition:
\begin{align}
    e_{n}^{(s+1)}(\mathbf{d})
    &=q_{n}^{(s+1)}(\mathbf{d})+\sum_{J\subsetneq\{\mathbf{n}\}} q_{n,J}^{(s+1)}(\mathbf{d})\label{eq:decompo}\\
    &=\left(\frac{-1}{\pi^{2}}\right)^{s+1}\sum_{\substack{i_1+\ldots+i_n\\=2s+2}}\frac{(2s+1)!!}{i_1!\ldots i_n!}d_1^{i_1}\ldots d_n^{i_n}+\mathcal{O}_{n,s}\left(|\mathbf{d}|^{2s+1}\right)\label{eq:explicit}
\end{align}
where $q_{n}^{(s+1)}$ is a polynomial in $n$ variables of degree $2s+2$ and $q_{n,J}^{(s+1)}$ are $J$-partial polynomials of degree $\mathrm{deg}(q_{n,J}^{(s+1)})\leq 2s-2$ and $\mathrm{supp}(q_{n,J}^{(s+1)})\subset\{|\mathbf{d}_{\{\mathbf{n}\}\setminus J}|<2s-2\}$.
\end{claim}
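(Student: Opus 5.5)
We split $e_n^{(s+1)}$ according to the three contributions $e_{\delta_1\mathcal{A}_n/V_n}^{(s+1)}$, $e_{\delta_1\mathcal{B}_n/V_n}^{(s+1)}$ and $e_{\delta_1\mathcal{C}_n}^{(s+1)}$, obtained in the previous subsections, and treat each of the $n$ discrete integrals in the telescoping formula separately. The $k$-th integral is $\sum_{i=0}^{d_k-1}F(i,0,\ldots,0,d_{k+1},\ldots,d_n)$ after using the symmetry of Proposition~\ref{prop:internbMZ}, i.e. we place the $k$-th variable in the first slot.

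\textbf{Polynomial part.} The main polynomials $q_{\delta_1\mathcal{A}_n/V_n}^{(s+1)}$ and $q_{\delta_1\mathcal{B}_n/V_n}^{(s+1)}$ have degree $2s+1$. By Faulhaber's formula (Remark~\ref{rem:convo}), summing over $i$ from $0$ to $d_k-1$ gives a polynomial of degree $2s+2$ in $(d_k,\ldots,d_n)$. The main polynomial $q_{\delta_1\mathcal{C}_n}^{(s+1)}$ has degree $2s-2$, so it only gives lower order terms. We set $q_n^{(s+1)}$ to be the sum of these $n$ integrated polynomials.

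\textbf{Partial polynomials.} For a $J$-partial polynomial, Remark~\ref{rem:partpol}(3) shows that summing over the first variable keeps the support unchanged. It raises the degree by one exactly when that variable is polynomial. The degree bounds $2s-3$ for $\mathcal{A}$ and $\mathcal{B}$, and $2s-2$ for $\mathcal{C}$, therefore become at most $2s-2$, as claimed for $q_{n,J}^{(s+1)}$.

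There is one subtlety. In the $k$-th integral, the slots $1,\ldots,k-1$ are frozen at $0$. We then regard the result as a function of all of $\mathbf{d}$, and the $J$-structure has to be transported through the relabelling. Variables frozen at $0$ in a non-$J$ slot stay in the support, which is harmless. Variables frozen at $0$ in a $J$ slot just evaluate the polynomial there, which is also harmless. The supports coming from $\mathcal{B}$ and $\mathcal{C}$ are $\{|\mathbf{d}|<2s-2\}$; those from $\mathcal{A}$ are smaller, $\{|\mathbf{d}|<2s-3\}$. So every support lies in $\{|\mathbf{d}_{\{\mathbf{n}\}\setminus J}|<2s-2\}$.

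\textbf{Leading coefficient.} This is the main computation. The top-degree part of $e^{(s+1)}_{\delta_1\mathcal{A}_n/V_n}+e^{(s+1)}_{\delta_1\mathcal{B}_n/V_n}$ is
$$\left(\frac{-1}{\pi^{2}}\right)^{s+1}(2s-1)!!\,(d_1+\ldots+d_n)\sum_{i_1+\ldots+i_n=2s}\frac{d_1^{i_1}\ldots d_n^{i_n}}{i_1!\ldots i_n!}
=\left(\frac{-1}{\pi^{2}}\right)^{s+1}(2s-1)!!\,\frac{|\mathbf{d}|^{2s+1}}{(2s)!}.$$
The equality is the multinomial theorem. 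The $\mathcal{B}$ coefficient must be read with $1/i_1!$, matching Claim~\ref{claim:B2}; we check this normalisation against the $\mathcal{A}$ term so that the two contributions combine into the factor $(d_1+\ldots+d_n)$.

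Discrete integration along the path of the telescoping sum then gives, at top order, the integral $\int_0^{|\mathbf{d}|} t^{2s+1}\,dt/(2s)!=|\mathbf{d}|^{2s+2}/((2s+2)(2s)!)$. This holds because the integrand depends only on the running total $t$ of the partial sum, so the $n$ pieces concatenate into a single integral, up to $\mathcal{O}(|\mathbf{d}|^{2s+1})$.

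Finally, $(2s-1)!!/((2s+2)(2s)!)=(2s+1)!!/(2s+2)!$. Expanding $|\mathbf{d}|^{2s+2}/(2s+2)!$ by the multinomial theorem yields \eqref{eq:explicit}.

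\textbf{Main obstacle.} The hard part is the bookkeeping of the partial polynomials under freezing and relabelling. In particular, we must make sure that no piece of degree $2s-2$ with a larger support is produced by the $\mathcal{B}$ convolution correction terms. To control this, we would first track the case $n=2$ explicitly and then argue by symmetry.
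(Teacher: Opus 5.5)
Your leading-coefficient computation is correct, and it takes a different route from the paper. The paper looks only at the last telescoping piece $\sum_{i<d_1}(\cdots)(i,d_2,\ldots,d_n)$, which is the only one depending on $d_1$. That piece fixes the coefficients of monomials with $i_1\ge1$; there the $\mathcal{A}$- and $\mathcal{B}$-contributions are combined monomial by monomial through $\sum_{j\ge2}i_j+(i_1-1)=2s+1$. The monomials with $i_1=0$ are then recovered from the symmetry of $q_n^{(s+1)}$.

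You instead observe that the top-degree part of the integrand is $(-1/\pi^2)^{s+1}(2s-1)!!\,|\mathbf d|^{2s+1}/(2s)!$, a function of $|\mathbf d|$ alone. Since the frozen slots are $0$, the $n$ pieces run over consecutive ranges of the running total and concatenate into $\sum_{t<|\mathbf d|}t^{2s+1}$. This avoids the monomial bookkeeping and the final appeal to symmetry, and the factor $2s+1$ appears at once from $|\mathbf d|^{2s+1}/(2s)!=(2s+1)|\mathbf d|^{2s+1}/(2s+1)!$. The only extra check is that the lower-order part of the integrand, summed over at most $|\mathbf d|$ terms, is $\mathcal{O}((1+|\mathbf d|)^{2s+1})$, which is immediate.

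On normalisation: the $1/i_1!$ need not be calibrated against $\mathcal{A}$. It is what the proof of Claim~\ref{claim:B2} gives, because the $m+1$ pairs with $i_{k_1}+i_{k_2}=m$ turn $1/(m+1)!$ into $1/m!$. The $(i_1+1)!$ in the display after Claim~\ref{claim:B2} is a misprint.

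Your partial-polynomial degree count, however, has a gap. You quote Remark~\ref{rem:partpol}(3): the degree rises by one when the summed variable is a polynomial variable. You then keep the $\mathcal{C}$-pieces at degree $2s-2$. But for these pieces slot $1$ \emph{is} a polynomial variable: by Claim~\ref{claim:C1}, $d_1$ enters through $k_2=i+d_1-2-k_1$ in $q_{|J|+1}(k_2,\mathbf d_J)$. For instance, take $g_1=0$, $I=\{a,b\}$, where $[\tau_{k_1}\tau_{d_a}\tau_{d_b}]_{0,3}=\mathbf 1_{\{k_1=d_a=d_b=0\}}$. This gives $\mathbf 1_{\{d_a=d_b=0\}}P(d_1,\mathbf d_J)$ with $\deg P=2s-2$ and non-zero top part, and discrete integration raises it to degree $2s-1$. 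The paper's sentence ``with degrees and support as announced'' skips the same point, so it cannot close this. You need a sharper input than Claim~\ref{claim:C}, a cancellation between telescoping pieces, or a weaker bound.

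A minor point: Remark~\ref{rem:partpol}(3) is not literally true when the summed variable is a support variable, since $\sum_{i<d_1}c(i,\cdot)$ is eventually constant. Subtracting $\sum_i c(i,\cdot)$ moves index $1$ to the polynomial side without raising the degree, and the same applies to frozen support slots. Supports are unaffected.

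The obstacle you flag is real, and it sits in the inputs. Replacing $\sum_{k_1+k_2=d_1+i-2}$ by its Faulhaber polynomial is wrong at $d_1+i-2=-2$. In $\mathcal{A}$, likewise, $[\tau_{-1}\cdots]=0\neq q_{n-1}^{(s)}(-1,\ldots)$. The proofs of Claims~\ref{claim:A2} and~\ref{claim:B2} treat the main part as an exact polynomial and ignore these discrepancies, which are supported on $\{d_1=0\}$ and have degree up to $2s$ in the remaining variables.

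Separately, $\sum_{i\ge0}(a_{i-1}-a_i)=-1$, because $a_{-1}=0$ and $a_i\to1$. Those claims instead use the value $-\tfrac12$, which Corollary~\ref{cor:polandai} gives for $\sum_{i\ge0}(a_{i+1}-a_i)$.

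Both effects already appear at first order for $n=1$. The recursion gives $e_1^{(1)}(d)=-(d^2-\tfrac32 d+1)/\pi^2$ for $d\ge1$, but $e_1^{(1)}(0)=0$. So the $d^2$-coefficient is $-1/\pi^2$, not $-1/(2\pi^2)$, and a compactly supported term is present. This is consistent with the first-order formula of \cite{MZ}. Your concatenation step goes through unchanged with the corrected constant. But, like the paper's proof, your argument establishes the statement only modulo these inputs.
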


\begin{proof}[Proof of Claim~\ref{claim:s+1}]

Through discrete integration, the degree of the polynomials and partial polynomials increases by one by Faulhaber's formula, see Remark~\ref{rem:convo}. For partial polynomials, it behaves slightly differently, see Remark~\ref{rem:partpol}. Thus we have the decomposition~\ref{eq:decompo} in polynomial and partial polynomials, with degrees and support as announced.

For the computation of the dominant terms, let us look at the last sum of the discrete integration. The former ones do not depend on $d_1$: the polynomials obtained might be of degree $2s+2$ but only in $(d_2,\ldots d_n)$. As we know that intersection numbers are symmetric, it is is sufficient to look at the last sum to compute the whole polynomial.

\begin{align*}
    q_{n}^{(s+1)}(\mathbf{d})&=\sum_{i=0}^{d_1-1}\left(e_{\delta_1{\mathcal{A}}_{n}/{V_{n}}}^{(s+1)}+e_{\delta_1\mathcal{B}_n/V_{n}}^{(s+1)}+e_{\delta_1\mathcal{C}_n}^{(s+1)}\right)(i,d_2,\ldots d_{n})\\
    &=\sum_{i=0}^{d_1-1}\left(\frac{-1}{\pi^{2}}\right)^{s+1}\sum_{\substack{i_1+\ldots\\+i_n=2s}}\sum_{j=2}^{n}\frac{(2s-1)!!}{i_1!\ldots i_n!}\cdot d_1^{i_1}\ldots d_j^{i_j+1} \ldots d_n^{i_n}\\
    &\hspace{3.5cm}+\sum_{i=0}^{d_1-1}\left(\frac{-1}{\pi^{2}}\right)^{s+1}\sum_{\substack{i_1+\ldots\\+i_n=2s}}\frac{(2s-1)!!}{i_1!\ldots i_n!}\cdot d_1^{i_1+1} d_2^{i_2}\ldots d_n^{i_n}\\
    &\hspace{4.8cm}+\mathcal{O}_{n,s}\left((d_2+\ldots+ d_n)^{2s+2}+|\mathbf{d}|^{2s+1}\right)
\end{align*}

By Remark~\ref{rem:convo},

\begin{align*}
    q_{n}^{(s+1)}(\mathbf{d})&=\left(\frac{-1}{\pi^{2}}\right)^{s+1}\sum_{\substack{i_1+\ldots\\+i_n=2s}}\sum_{j=2}^{n}\frac{(2s-1)!!}{(i_1+1)!\ldots i_n!}\cdot d_1^{i_1+1}\ldots d_j^{i_j+1} \ldots d_n^{i_n}\\
    &\hspace{3.1cm}+\left(\frac{-1}{\pi^{2}}\right)^{s+1}\sum_{\substack{i_1+\ldots\\+i_n=2s}}\frac{(2s-1)!!(i_1+1)}{(i_1+2)!\ldots i_n!}\cdot d_1^{i_1+2} d_2^{i_2}\ldots d_n^{i_n}\\
    &\hspace{4.8cm}+\mathcal{O}_{n,s}\left((d_2+\ldots+ d_n)^{2s+2}+|\mathbf{d}|^{2s+1}\right)\\
    &=\left(\frac{-1}{\pi^{2}}\right)^{s+1}\sum_{\substack{i_1+\ldots+i_n=2s+2\\i_1\geq 1}}\frac{(2s-1)!!}{i_1!\ldots i_n!}d_1^{i_1}\ldots d_n^{i_n}\left(\sum_{j=2}^{n}i_j + i_1-1\right)\\
    &\hspace{4.8cm}+\mathcal{O}_{n,s}\left((d_2+\ldots +d_n)^{2s+2}+|\mathbf{d}|^{2s+1}\right)\\
    &=\left(\frac{-1}{\pi^{2}}\right)^{s+1}\sum_{\substack{i_1+\ldots+i_n=2s+2\\i_1\geq 1}}\frac{(2s+1)!!}{i_1!\ldots i_n!}\cdot d_1^{i_1}\ldots d_n^{i_n}\\
    &\hspace{4.8cm}+\mathcal{O}_{n,s}\left((d_2+\ldots+ d_n)^{2s+2}+|\mathbf{d}|^{2s+1}\right)
\end{align*}

By symmetry, 

\begin{align*}
    q_{n}^{(s+1)}(\mathbf{d})=\left(\frac{-1}{\pi^{2}}\right)^{s+1}\sum_{\substack{i_1+\ldots+i_n\\=2s+2}}\frac{(2s+1)!!}{i_1!\ldots i_n!}\cdot d_1^{i_1}\ldots d_n^{i_n}+\mathcal{O}_{n,s}\left(|\mathbf{d}|^{2s+1}\right)
\end{align*}

and 

\begin{align*}
    e_{n}^{(s+1)}(\mathbf{d})=\left(\frac{-1}{\pi^{2}}\right)^{s+1}\sum_{\substack{i_1+\ldots+i_n\\=2s+2}}\frac{(2s+1)!!}{i_1!\ldots i_n!}\cdot d_1^{i_1}\ldots d_n^{i_n}+\mathcal{O}_{n,s}\left(|\mathbf{d}|^{2s+1}\right).
\end{align*}
The proof of Claim~\ref{claim:s+1} is complete.
\end{proof}

\subsection{Bound on the coefficients of the expansion}\label{subsec:bound}
It remains to prove that the other coefficients of the polynomials and partial polynomials are bounded by $(C_ns)^{s}$.
\begin{claim}\label{claim:controlcoef}
There exist a constant $C_n$ depending on $n$ such that for any $s,\mathbf{d}$: $e_n^{(s)}(\mathbf{d})$ is bounded by: 
\begin{align*}
    |e_{n}^{(s)}(\mathbf{d})|\leq \sum_{\substack{i_1+\ldots \\+i_n\leq 2s}}(C_ns)^{s}\frac{d_1^{i_1}\ldots d_n^{i_n}}{i_1!\ldots i_n!}. 
\end{align*}
\end{claim}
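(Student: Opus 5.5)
We prove Claim~\ref{claim:controlcoef} by strong induction on $s$, for all $n$ simultaneously. The recursive structure is the same as in the construction of $e_n^{(s+1)}$ above, but this time we track coefficient sizes rather than leading terms. It is convenient to measure a function $p$ on $\mathbb{N}^n$ by the smallest $M$ such that
$$|p(\mathbf{d})|\leq M\sum_{i_1+\ldots+i_n\leq 2s}\frac{d_1^{i_1}\ldots d_n^{i_n}}{i_1!\ldots i_n!},$$
a weighted norm adapted to exponential-type generating functions. In this norm the elementary operations of the construction behave well.

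\textbf{Discrete integration.} Summing over $i\leq d_1-1$ sends $d^{k}/k!$ to at most $d^{k+1}/(k+1)!$. Indeed $\sum_{i<d} i^k/k!\leq d^{k+1}/(k+1)!\cdot(1+O(1/d))$; to avoid the Bernoulli correction we use the cruder bound $\sum_{i<d}i^k\leq d^{k+1}/(k+1)$ directly. So integration does not increase the constant $M$.

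\textbf{Convolution.} The sum over $k_1+k_2=m$ of $k_1^{a}k_2^{b}/(a!\,b!)$ is at most $m^{a+b+1}/(a+b+1)!$, by the Beta integral. This is again norm-nonincreasing, up to counting the number of pairs $(a,b)$ with fixed $a+b$, which costs a factor at most $2s+1$.

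\textbf{Shifts and sums against $a_{i-1}-a_i$.} The shift $(d_1+d_j+i-1)$ together with the sum against $(a_{i-1}-a_i)$ requires a uniform bound $\sum_i i^{k}|a_{i-1}-a_i|\leq C^k k!$. This follows from $a_{i}-a_{i-1}=O(4^{-i})$ (as $a_i\to 1$ geometrically), and combined with the binomial expansion it multiplies the norm by at most $C^{2s}$.

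\textbf{The multiplier $(2d_j+1)$.} This contributes a factor of order $s$, since $d\cdot d^k/k!=(k+1)\,d^{k+1}/(k+1)!$.

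With these tools we set up the recursion for the norms $M_n^{(s)}$ of $e_n^{(s)}$. From the formula
$$e_{\delta_1\mathcal{A}_n/V_n}^{(s+1)}=\sum_r e^{(r)}_{\delta_1\mathcal{A}_n/V_{n-1}}\,\mathfrak{a}_n^{(s+1-r)},$$
and its analogue for $\mathcal{B}$, we get
$$M_n^{(s+1)}\leq C\,s\sum_{r\leq s}\bigl(M_{n-1}^{(r)}+M_{n+1}^{(r)}\bigr)\bigl(|\mathfrak{a}_n^{(s+1-r)}|+|\mathfrak{b}_n^{(s+1-r)}|\bigr)+(\text{$\mathcal{C}$-terms}).$$
This requires bounds $|\mathfrak{a}_n^{(k)}|,|\mathfrak{b}_n^{(k)}|\leq (Ck)^k$, and hence bounds of the same type on the Mirzakhani--Zograf coefficients $b_n^{(k)},c_n^{(k)}$ of Theorem~\ref{thm:expratios}. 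I would derive these from the Mirzakhani--Zograf recursion for the ratios, which is itself driven by the intersection-number expansions at $\mathbf{d}$ bounded. A cleaner option is to include the statement $|b_n^{(k)}|,|c_n^{(k)}|\leq (C_nk)^k$ in the induction hypothesis.

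The main obstacle is twofold. First, the recursion involves indices $n\pm1$, so $n$ is not fixed along the induction. Since $\mathcal{B}$ raises $n$ to $n+1$ while lowering $g$, an induction at depth $s$ involves $n$ up to $n+s$, which would spoil a constant $C_n$ depending only on $n$. To handle this I would combine the $n$-dependence into a weight: prove $M_{m}^{(r)}\leq (A\,r)^{r}B^{m}$ for all $m$, with $B^{m}$ absorbing the growth. Checking that the recursion closes, namely $Cs\sum_r (Ar)^rB^{n+1}(C(s+1-r))^{s+1-r}\leq (A(s+1))^{s+1}B^n$, reduces to the convexity estimate $r^r k^k\leq (r+k)^{r+k}\binom{r+k}{r}^{-1}$ and to choosing $A\gg B C$.

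Second, the $\mathcal{C}$-term requires a bound on $\frac{V_{g-g_1,n-|I|}}{V_{g,n}}$ summed over $g_1$ and on the fixed-size intersection numbers $[\tau_{k_1}\prod_{i\in I}\tau_{d_i}]_{g_1,|I|+1}$. For the latter I would use the crude bound $\leq V_{g_1,|I|+1}\leq (Cg_1)^{2g_1}$, the growth $V_{g,n}\approx (2g)!\,(4\pi^2)^{2g}$. This is compatible with the $(Cs)^s$ target because $2g_1+|I|\leq s$, so the $g_1$-sum contributes a product of two factorial-type terms whose total degree matches $s$. This is the step I expect to require the most care.
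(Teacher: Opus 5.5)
Your scheme is essentially the paper's: induction on $s$, pointwise bounds in the basis $d_1^{i_1}\cdots d_n^{i_n}/(i_1!\cdots i_n!)$, moment bounds for $a_{i+1}-a_i$, factorial bounds on $V_{g_1,|I|+1}$ for the $\mathcal{C}$-term, and bounds on the ratio coefficients $b_n^{(k)},c_n^{(k)}$ carried through the induction (the paper defers these to \cite{HMT}). The content of the claim is that one induction step costs only a factor linear in $s$, since any larger per-step loss compounds over the $s$ steps. One of your steps violates this as written.

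You say the shift $d_1+d_j+i-1$, combined with the sum against $a_{i-1}-a_i$, multiplies the norm by at most $C^{2s}$. You then write $M_n^{(s+1)}\leq Cs\sum_r(\cdots)$ without that factor. A genuine loss of $C^{2s}$ per step compounds to growth like $C^{s^2}$, not $(C_ns)^s$. What you need, and what is true, is that this step costs $O(1)$. Write $\frac{(d_1+d_j+i)^c}{c!}=\sum_{a+b+t=c}\frac{d_1^a d_j^b i^t}{a!\,b!\,t!}$ and sum over the number $t$ of lost degrees. The cost in your uniform norm is then at most $a_0+\sum_{i\geq 0}(a_{i+1}-a_i)\exp(i)$. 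This is finite because $a_{i+1}-a_i=O(4^{-i})$ and $\exp(1)<4$; equivalently, your moment constant can be taken to be $1/\ln 4<1$. The moment bound $\sum_i(a_{i+1}-a_i)i^r\leq 2\,r!$ of \cite{HMT} alone would not suffice here. Summing over $t$ would still cost up to $2(2s+1)$, and together with the factor of order $s$ from $(2d_j+1)$ this gives $s^2$ per step. So you need either this geometric gain or a degree-graded norm.

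Your concern about the drift $n\to n\pm 1$ is justified; the paper silently uses the same constant for $n-1$, $n$ and $n+1$. However, the ansatz $M_m^{(r)}\leq (Ar)^rB^m$ does not close. In the $\mathcal{A}$-term you sum over $j\in\{2,\ldots,m\}$. The constant part of $(2d_j+1)$ alone gives a factor $m-1$, and bounding each $j$ separately by a factor of order $s$, as you do, gives $(m-1)s$. So your recursion constant depends on $m$. The step from $M^{(s)}_{m-1}$ then requires $C(s+m)\leq AB(s+1)(1+1/s)^s$, which fails for large $m$. Since you induct over all $m$ at once, no weight $B^m$ can absorb a factor linear in $m$.

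The repair has two parts. First, for a target monomial use $\sum_{j\geq 2}i_j\leq|\mathbf{i}|$ (the paper's remark $2s+1=\sum_{j\geq2}i_j+i_1+1$), so the $\mathcal{A}$-term costs $O(s+m)$ rather than $O(ms)$. Second, induct on $M_m^{(r)}\leq (A(r+m))^r$. This closes against both the $m-1$ and $m+1$ terms and gives $(A(s+n))^s\leq \exp(n)(As)^s$.

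Finally, your $\mathcal{C}$-term is only a heuristic. Besides $V_{g_1,|I|+1}$, you need bounds on all coefficients $\mathfrak{c}^{(k)}_{n,g_1,|I|}$ of $V_{g-g_1,n-|I|}/V_{g,n}$, not just its leading order; these follow from the ratio bounds through the telescoping product. The paper's own treatment, via $V_{g,n}\leq C^{2g+n}(2g+n)!$ from \cite{HMT}, rests on the same degree count and is no more detailed.
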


In \cite{HMT}, the authors explain that the control over the polynomial coefficients of the expansion of intersection numbers require the study of three other expansions: the expansion of the two volume ratios of Theorem~\ref{thm:expratios} and the expansion of the discrete derivative of intersection numbers. Their proof goes through an induction in 3 steps, see the schematic of the proof at the beginning of Section 4.3 in \cite{HMT}: 
\begin{enumerate}
    \item The expansion of intersection numbers and of volume ratios up to order $s$ yields the expansion of the discrete derivative up to order $s+1$. 
    \item The expansion of the discrete derivative up to order $s+1$ yields the expansion of intersection numbers up to order $s+1$. 
    \item The expansion of intersection numbers up to order $s+1$ yields the expansion of the volume ratios up to order $s+1$. 
\end{enumerate}

Our approach differs mainly concerning the first step, therefore we refer to their paper for the latter two. The main ideas which allow us to improve the control of the coefficients are already present in our explicit computation of the main term. We don't bound separately the three terms of topological recursion, as it is done in Lemma 4.12, 4.13 and 4.14 in \cite{HMT}. This allow us to get only one factor $s$, which is analogue to the factor $(2s+1)$ which appears in our main term computation: 
$$2s+1=\sum_{j=2}^{n}i_j+i_1+1$$
where the sum comes from the first term of the recursion, and $i_1+1$ comes from the second term. 

\begin{proof}[Proof of Claim~\ref{claim:controlcoef}]
We will first show that, for any $\mathbf{d}$, the main polynomials verify: 
\begin{equation*}
    |q_n^{(s)}(\mathbf{d})|\leq \sum_{\substack{i_1+\ldots\\+i_n\leq 2s}}\frac{(C_ns)^{s}}{i_1!\ldots i_n!}d_1^{i_1}\ldots d_n^{i_n}.
\end{equation*}

Only the two first terms of the topological recursion need to be considered in order to compute main polynomials. For the derivative of the first term, we get the following bound using the induction hypothesis: 
\begin{align*}
    |q_{\delta_1\mathcal{A}_n/V_{n-1}}^{(s)}(\mathbf{d})|&=|8\;\sum_{j=2}^{n}\sum_{i=0}^{\infty} (\!a_{i-1}\!-\!a_{i}\!)(2d_j+1) \cdot q_{n-1}^{(s)}(d_1\!+\!d_j\!+\!i\!-\!1,d_2\ldots\widehat{d_j},\ldots d_n)|\\
    &\leq 8\;\sum_{j=2}^{n}\sum_{i=-1}^{\infty} (\!a_{i+1}\!-\!a_{i}\!)(2d_j+1) \cdot |q_{n-1}^{(s)}(d_1\!+\!d_j\!+\!i\!,d_2\ldots\widehat{d_j},\ldots d_n)|\\
    &\leq 8\;\sum_{j=2}^{n}(2d_j+1)\cdot\sum_{i=-1}^{\infty} (\!a_{i+1}\!-\!a_{i}\!)\cdot(C_ns)^{s}\\
    &\hspace{4.6cm}\cdot\sum_{\substack{i_1'+\ldots+\hat{i_j}\\+\ldots+i_n\leq 2s}}\frac{(d_1+d_j+i)^{i_1'}\ldots d_n^{i_n}}{i_1'!\ldots\hat{i_j} \ldots i_n!}
\end{align*}

We need to bound
\begin{align*}
    \sum_{i=-1}^{\infty} (\!a_{i+1}\!-\!a_{i}\!)(d_1+d_j+i)^{i_1'}=\sum_{i_1+i_j+i_0=i_1'}i_1'!\frac{d_1^{i_1}d_j^{i_j}}{i_1!i_j!}\cdot \frac{1}{i_0!}\sum_{i=-1}^{\infty} (\!a_{i+1}\!-\!a_{i}\!)i^{i_0}.
\end{align*}
\end{proof}

Therefore we use Lemma 4.5 in \cite{HMT}:
\begin{lem}[Hide-Macera-Thomas]\label{lem:HMT1}
For any $r\in\mathbb{N}$, 
\begin{equation*}
    \sum_{i=-1}^{\infty}(\!a_{i+1}\!-\!a_{i}\!)i^{r}\leq 2r!
\end{equation*}
\end{lem}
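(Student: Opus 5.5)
The plan is to bound the weights $a_{i+1}-a_i$ by an explicit geometric sequence, and then to compare $\sum_i i^r q^i$ with $r!$ by an elementary exponential inequality.

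\emph{Step 1: the weights.} First I would record the values needed. Since $a_0=(1-2)\zeta(0)=\tfrac12$ and $a_{-1}=0$, the $i=-1$ term equals $\tfrac12(-1)^r\le \tfrac12$. The $i=0$ term vanishes for $r\ge 1$. For $i\ge 1$ I would use the Dirichlet eta expansion
$$a_i=(1-2^{1-2i})\zeta(2i)=\sum_{k\ge1}(-1)^{k+1}k^{-2i}.$$
Because this alternating series has decreasing terms, it gives $1-4^{-i}\le a_i\le 1$. It also shows that $a_i$ is increasing, since $a_{i+1}-a_i=\sum_{k\ge 2}(-1)^{k}(k^{-2i}-k^{-2i-2})$ is again an alternating series with decreasing nonnegative terms. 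Hence, using $a_{i+1}\le 1$,
$$0\le a_{i+1}-a_i\le 1-a_i\le 4^{-i}.$$
For $r=0$ the sum telescopes to $\lim a_i-a_{-1}=1\le 2$, which settles that case.

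\emph{Step 2: comparison with $r!$.} For $r\ge1$, take any $t\in(0,\ln 4)$. The inequality $e^{ti}\ge (ti)^r/r!$ gives
$$\sum_{i\ge1}(a_{i+1}-a_i)\,i^r\le \frac{r!}{t^r}\sum_{i\ge1}\Big(\frac{e^t}{4}\Big)^i=\frac{r!}{t^r}\cdot\frac{e^t}{4-e^t}.$$
The total is therefore at most $r!\big(\tfrac{1}{2\,r!}+t^{-r}\tfrac{e^t}{4-e^t}\big)$. I would choose $t$ slightly above $1$, for instance $t=1.2$. Then the factor $t^{-r}e^t/(4-e^t)$ decays geometrically in $r$, so the bound is below $2r!$ for all $r\ge r_0$, with $r_0$ explicit and of the order of a dozen. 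Optimizing $t$ depending on $r$, i.e. taking $t\to\ln 4$ as $r$ grows, lowers $r_0$ further.

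\emph{Step 3: small $r$, the main obstacle.} The crude geometric bound is wasteful for small $r$. For example, at $t=1$ it yields about $2.12+0.5>2$. So for $1\le r<r_0$ I would sharpen the weights rather than the comparison. First, use the exact alternating expansion $1-a_i=4^{-i}-9^{-i}+16^{-i}-\cdots\le 4^{-i}-9^{-i}+16^{-i}$. Second, use $a_{i+1}-a_i=(1-a_i)-(1-a_{i+1})$, which retains the cancellation between consecutive terms: for $i\ge 1$ it gives
$$a_{i+1}-a_i\le 4^{-i}-9^{-i}+16^{-i}-(4^{-i-1}-9^{-i-1})=\tfrac34\,4^{-i}-\tfrac89\,9^{-i}+16^{-i}.$$
With these, the sums $\sum_i i^r 4^{-i}$, $\sum_i i^r 9^{-i}$, $\sum_i i^r 16^{-i}$ are polylogarithms $\mathrm{Li}_{-r}$ at rational points, computable in closed form, e.g. via Eulerian numbers. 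It then remains to check numerically that their combination plus $\tfrac12$ stays below $2r!$ for the finitely many remaining $r$. For $r=1$ one can do even better: Lemma~\ref{lem:sumoveri} gives the exact value $\sum_{i\ge0} i(a_{i+1}-a_i)=\tfrac14$, so the full sum equals $\tfrac14-\tfrac12<2$.

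This finite verification, together with choosing $t$ so that $r_0$ is small, is where I expect the only real work. Everything else follows directly from the alternating-series monotonicity in Step 1.
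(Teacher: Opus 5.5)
Your proof is correct. It differs from the paper, which gives no argument and imports the bound as Lemma 4.5 of Hide--Macera--Thomas. Your route is self-contained. The eta expansion $a_i=\sum_{k\ge1}(-1)^{k+1}k^{-2i}$ gives monotonicity and geometric decay of the weights. The claim then reduces to comparing $\sum_i i^r4^{-i}$ with $r!$. This also shows the sum is at most $\tfrac12$ plus a term of order $r!\,(\ln 4)^{-r}$, so the constant $2$ is far from sharp. Tightening it could only improve constants in Subsection~\ref{subsec:bound}.

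Two remarks would shorten your write-up. First, the alternating series you wrote for $a_{i+1}-a_i$ already gives $0\le a_{i+1}-a_i\le \tfrac34\,4^{-i}$ from its first term. This uses that $k\mapsto k^{-2i}-k^{-2i-2}$ decreases for $k\ge 2$, which holds for $i\ge1$ by a one-line derivative check. It fails at $i=0$, but you never need that case. So the three-term refinement in Step 3 is unnecessary.

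Second, the finite check is much lighter than you expect, so $r_0$ is not of order a dozen. At $t=1.2$ your Step 2 bound is already below $2\,r!$ for every $r\ge5$. For $r\le 4$, the exact values $\sum_{i\ge1}i^r4^{-i}=\tfrac49,\tfrac{20}{27},\tfrac{44}{27},\tfrac{380}{81}$ suffice even with the crude weight $4^{-i}$.

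You can also avoid case analysis entirely. The function $x\mapsto x^r4^{-x}$ is unimodal, and $r^re^{-r}\le r!$. Hence $\sum_{i\ge1}i^r4^{-i}\le \int_0^\infty x^r4^{-x}\,dx+\max_{x\ge0}x^r4^{-x}\le r!\,(\ln 4)^{-r}\bigl(1+(\ln 4)^{-1}\bigr)<1.25\,r!$. Then $\tfrac12+1.25\,r!\le 2\,r!$ for all $r\ge1$.
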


Thus we obtain: 

\begin{align*}
    |q_{\delta_1\mathcal{A}_n/V_{n-1}}^{(s)}(\mathbf{d})|\leq& 32\cdot(C_ns)^{s}\sum_{\substack{i_1+\ldots\\+i_n\leq 2s+1}}\cdot\frac{d_1^{i_1}\ldots d_n^{i_n}}{i_1!\ldots i_n!}\sum_{j=2}^{n}i_j\\
    &+16\cdot(n-1)(C_ns)^{s}\sum_{\substack{i_1+\ldots\\+i_n\leq 2s}}\frac{d_1^{i_1}\ldots d_n^{i_n}}{i_1!\ldots i_n!}.
\end{align*}

For the derivative of the second term, by induction hypothesis we have: 
\begin{align*}
    |q_{\delta_1\mathcal{B}_n/V_{n+1}}^{(s)}(\mathbf{d})|&=|16\sum_{i=0}^{\infty}\sum_{\substack{k_1+k_2=\\d_1+i-2}}(a_{i-1}-a_{i}) \cdot q_{n+1}^{(s)}(k_1,k_2,d_2\ldots d_n)|\\
    &\leq 16\sum_{i=-1}^{\infty}\sum_{\substack{k_1+k_2=\\d_1+i-1}}(a_{i+1}-a_{i}) \cdot (C_ns)^{s}\sum_{\substack{i_{k_1}+i_{k_2}+i_2\\+\ldots+i_n\leq 2s}}\frac{k_1^{i_{k_1}}k_2^{i_{k_2}}d_2^{i_2}\ldots d_n^{i_n}}{i_{k_1}!i_{k_2}!i_2!\ldots i_n!}\\
    &\leq 16\cdot(C_ns)^{s}\sum_{\substack{i_1'+i_2+\ldots\\+i_n\leq 2s}}\frac{d_2^{i_2}\ldots d_n^{i_n}}{i_1!i_2!\ldots i_n!}\sum_{i=-1}^{\infty}(a_{i+1}-a_{i})(d_1+i-2)^{i_1'+1}.
\end{align*}

As for the first term, one obtains through Lemma~\ref{lem:HMT1}:

\begin{align*}
    \sum_{i=-1}^{\infty}(a_{i+1}-a_{i})(d_1+i-2)^{i_1'+1}&=\sum_{i_0+i_1=i_1'+1}\frac{(i_1'+1)!}{i_0!i_1!}d_1^{i_1}\sum_{i=-1}^{\infty}(a_{i+1}-a_{i})(i-1)^{i_0}\\
    &\leq 2\sum_{i_0+i_1=i_1'+1}\frac{(i_1'+1)!}{i_1!}d_1^{i_1}
\end{align*}

As $i_1'\leq 2s-\sum_{j=2}^{n}i_j$, the main polynomial of the derivative of the second term is bounded by: 

\begin{align*}
    |q_{\delta_1\mathcal{B}_n/V_{n+1}}^{(s)}(\mathbf{d})|\leq  32\cdot (C_ns)^{s}\sum_{\substack{i_1+\ldots\\+i_n\leq 2s+1\\\sum_{2}^n i_j\leq 2s}}\left(2s+1-\sum_{j=2}^{n}i_j\right)\frac{d_1^{i_1}\ldots d_n^{i_n}}{i_1!\ldots i_n!}. 
\end{align*}

Thanks to discrete integration, one can deduce a bound on the coefficients for $i_1\geq 1$, which corresponds to: 

\begin{align*}
    |q_n^{(s+1)}(\mathbf{d})-q_n^{(s+1)}(0,d_2\ldots d_n)|&\leq 32(C_ns)^{s}\sum_{\substack{i_1+\ldots\\+i_n\leq 2s+2}}\frac{d_1^{i_1}\ldots d_n^{i_n}}{i_1!\ldots i_n!}\sum_{j=2}^{n}i_j\\
    &\quad+16(n-1)(C_ns)^{s}\sum_{\substack{i_1+\ldots\\+i_n\leq 2s+1\\i_1\geq 1}}\frac{d_1^{i_1}\ldots d_n^{i_n}}{i_1!\ldots i_n!}\\
    +32\cdot (C_ns)^{s}&\sum_{\substack{i_1+\ldots\\+i_n\leq 2s+2\\i_1\geq 1;\sum_{2}^n i_j\leq 2s}}\left(2s+1-\sum_{j=2}^{n}i_j\right)\frac{d_1^{i_1}\ldots d_n^{i_n}}{i_1!\ldots i_n!}\\
    &\leq (C_ns)^{s}\cdot 32\cdot(2s+1)\sum_{\substack{i_1+\ldots\\+i_n\leq 2s+2}}\frac{d_1^{i_1}\ldots d_n^{i_n}}{i_1!\ldots i_n!}\\
    &\leq (C_n(s+1))^{s+1}\sum_{\substack{i_1+\ldots\\+i_n\leq 2s+2}}\frac{d_1^{i_1}\ldots d_n^{i_n}}{i_1!\ldots i_n!}.
\end{align*}
The symmetry of the polynomials concludes the induction for main polynomials.

For partial polynomials, one gets a similar bound on the coefficients: 
\begin{align*}
    |q_{n,J}^{(s)}(\mathbf{d})|&\leq (C_ns)^{s}\sum_{\substack{i_{j_1}+\ldots\\+i_{j_{|J|}}\leq 2s-4}}\frac{d_{j_1}^{i_{j_1}}\ldots d_{j_{|J|}}^{i_{j_{|J|}}}}{i_{j_1}!\ldots i_{j_{|J|}}!} 
\end{align*}

Partial polynomials appear in the third term of the topological recursion. Let us first show:
\begin{align*}
     |q_{\delta\mathcal{C},g_1,J}^{(s-1)}|\leq (C_ns)^{s}\sum_{\substack{i_1+i_{j_1}+\ldots i_{j_{|J|}}}}\frac{d_1^{i_1}d_{j_1}^{i_{j_1}}\ldots d_{j_{|J|}}^{i_{j_{|J|}}}}{i_1!i_{j_1}!\ldots i_{j_{|J|}}!}
\end{align*}

We recall that, as in the proof of Claim~\ref{claim:C1}: 
\begin{align*}
    q_{\delta\mathcal{C},g_1,J}^{(s-1)}(\mathbf{d}_J)&=\sum_{i=0}^{\infty} \sum_{\genfrac{}{}{0pt}{}{k_{1}+k_{2}}{=i+d_{1}-2}}(a_{i-1}-a_{i})&
    \cdot\left[\tau_{k_{1}} \prod_{i\in I } \tau_{d_{i}}\right]_{g_1,|I|+1}\cdot q_{|J|+1}^{(s+1-2g_1-|I|)}(k_2,\mathbf{d}_J)
\end{align*}
Then by induction hypothesis: 
\begin{align*}
    |q_{\delta\mathcal{C},g_1,J}^{(s-1)}(\mathbf{d}_J)|&\leq\sum_{i=-1}^{\infty} \sum_{\genfrac{}{}{0pt}{}{k_{1}+k_{2}}{=i+d_{1}-1}}(a_{i+1}-a_{i})\left[\tau_{k_{1}} \prod_{i\in I } \tau_{d_{i}}\right]_{g_1,|I|+1}
    \\
    &\cdot(C_n(s+1-2g_1-|I|))^{s+1-2g_1-|I|}\sum_{\substack{i_0+i_{j_1}+\ldots i_{j_{|J|}}\\\leq 2(s+1-2g_1-|I|)}}\frac{k_2^{i_0}d_{j_1}^{i_{j_1}}\ldots d_{j_{|J|}}^{i_{j_{|J|}}}}{i_0!i_{j_1}!\ldots i_{j_{|J|}}!}
\end{align*}

To control $\left[\tau_{k_{1}} \prod_{i\in I } \tau_{d_{i}}\right]_{g_1,|I|+1}$, we apply lemma 2.6 in \cite{HMT}, that is: 

\begin{lem}[Hide-Macera-Thomas]\label{lem:HMT2}
\begin{align*}
    V_{g,n}\leq C^{2g+n}(2g+n)!
\end{align*}
\end{lem}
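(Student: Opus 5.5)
The plan is to prove the bound by reducing every volume $V_{g,n}$, through a chain of elementary moves, to a volume with very small Euler characteristic. Each move should cost at most a multiplicative factor $C\cdot(2g+n)$ or $C$. Two moves are needed:
\begin{align*}
V_{g,n}&\leq C_1\, V_{g-1,n+2}, & V_{g,n+1}&\leq C_2\,(2g-2+n)\,V_{g,n},
\end{align*}
with absolute constants $C_1,C_2$ that do not depend on $g$ or $n$. Granting these, I would argue by induction on $2g+n$. First I apply the genus-lowering inequality $g$ times to get $V_{g,n}\leq C_1^{g}V_{0,n+2g}$. Then I apply the puncture inequality repeatedly in genus $0$ to get
\[
V_{0,m}\leq C_2^{m-3}(m-3)!\,V_{0,3}.
\]
Since $V_{0,3}=1$ and $m=n+2g$, these combine to $V_{g,n}\leq (\max(C_1,C_2,1))^{2g+n}(2g+n)!$. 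The small exceptional surfaces ($V_{1,1}$, $V_{0,3}$) serve as base cases and are absorbed into the constant.

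The real content is that the two comparison inequalities hold \emph{uniformly} in $n$. Theorem~\ref{thm:expratios} gives these ratios with $n$ fixed, and Corollary~\ref{cor:ratio} has constants depending on $n,n'$. Neither suffices here, because the lemma is applied with $|I|$ and $g_1$ ranging inside the sum of Lemma~\ref{lem:sumpartition}, and the chain above passes through $n+2g$ punctures.

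So I would instead derive the inequalities directly from Mirzakhani's recursion for intersection numbers, following Mirzakhani's own uniform estimates. The steps are as follows.
\begin{enumerate}
\item Use that the normalized intersection numbers are nonincreasing in each $d_i$, so that $[\tau_{d_1}\ldots\tau_{d_n}]_{g,n}\leq V_{g,n}$.
\item Use that the $a_i$ increase to $1$ and satisfy $a_{i+1}-a_i\leq C\,2^{-2i}$, which gives absolutely convergent weights with a uniform total, as in Lemma~\ref{lem:sumoveri}.
\item Apply the recursion with all $d_i=0$ and bound $\mathcal{A}$, $\mathcal{B}$, $\mathcal{C}$ term by term. The term $\mathcal{A}$ produces $(n-1)$ copies of volumes $V_{g,n-1}$. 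The term $\mathcal{B}$ produces $V_{g-1,n+1}$ times a factor linear in $2g+n$. The term $\mathcal{C}$ produces the sums $\sum V_{g_1,n_1+1}V_{g_2,n_2+1}$.
\end{enumerate}

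I expect the $\mathcal{C}$ term to be the main obstacle, because its products of volumes must be controlled uniformly. The first thing I would try is to put the inductive bound $C^{2g_i+n_i}(2g_i+n_i)!$ inside the sum and use the inequality
\[
\sum_{a+b=N}\binom{N}{a}^{-1}\leq 3 .
\]
This shows that the split sum is dominated by $C^{2g+n}(2g+n-1)!$ times a constant, provided $C$ is chosen large. Summing the three contributions then closes the induction for $V_{g,n}\leq C^{2g+n}(2g+n)!$ directly, without needing the two comparison inequalities separately.
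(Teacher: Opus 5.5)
Note first that the paper gives no proof of this lemma; it quotes Lemma~2.6 of Hide--Macera--Thomas. Your self-contained route has a genuine gap at its first step.

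\textbf{The genus move is false.} The inequality $V_{g,n}\le C_1V_{g-1,n+2}$ does not hold with $C_1$ independent of $n$. For fixed $g$ and $n\to\infty$, the Manin--Zograf asymptotics say that $V_{g,n}/n!$ grows like $B_g\,n^{(5g-7)/2}c^{\,n}$. In $V_{g,n}/V_{g-1,n+2}$ the powers of $n$ therefore contribute $n^{5/2}$, while the factorials contribute $n!/(n+2)!\sim n^{-2}$. Hence $V_{g,n}/V_{g-1,n+2}\asymp_g\sqrt n\to\infty$. Your chain $V_{g,n}\to V_{g-1,n+2}\to\cdots\to V_{0,n+2g}$ applies this move exactly at small genus with about $2g$ punctures, which is where it fails. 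So $V_{g,n}\le C_1^{g}V_{0,n+2g}$ is not established by your argument, and no derivation from the recursion can supply the stepwise inequality. Of your two moves, only the puncture inequality is uniformly true.

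\textbf{The fallback induction does not close.} Your term-by-term accounting drops the sums over $l$. At $\mathbf 0$ the recursion carries the weights $a_l\to1$, not the summable differences $a_{l+1}-a_l$, which appear only in $\delta_1$. Concretely, $\mathcal A_{g,n}(\mathbf 0)=8(n-1)\sum_{k=0}^{3g-4+n}a_{k+1}[\tau_k\tau_0^{n-2}]_{g,n-1}$ has about $3g+n$ terms. Likewise $\mathcal B_{g,n}(\mathbf 0)$ is a double sum over $l\le 3g-3+n$ and $k_1+k_2=l-2$.

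With only monotonicity and $[\tau_{\mathbf d}]\le V$, you get $\mathcal A\le 8(n-1)(3g-3+n)V_{g,n-1}$ and $\mathcal B\lesssim(3g+n)^2V_{g-1,n+1}$. After inserting the inductive bound, these are $\frac{8(n-1)(3g-3+n)}{C(2g+n)}$ and of order $\frac{(3g+n)^2}{C(2g+n)}$ times the target. Both are unbounded, whatever $C$ is. The loss is real, not an artefact of crude bounds: the first-order expansion $[\tau_k\tau_0^{n-2}]_{g,n-1}/V_{g,n-1}=1+O_n(k^2/g)$ already forces $\sum_k[\tau_k\tau_0^{n-2}]_{g,n-1}\gtrsim\sqrt g\,V_{g,n-1}$.

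In $\mathcal C$ the two pieces satisfy $(2g_1+|I|+1)+(2g_2+|J|+1)=2g+n+1$. So the product of inductive bounds carries $C^{2g+n+1}$, one power more than the target, and enlarging $C$ makes things worse. The exponent $2g-2+n$ would instead drop by one under the splitting. Finally, $\sum_a\binom Na^{-1}\le3$ does not absorb the $\binom{n-1}{|I|}$ choices of $I$, nor the $l,k$ sums.

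\textbf{A repair: reverse the order of your moves.} The puncture inequality $V_{g,n+1}\le C_2(2g-2+n)V_{g,n}$ does hold for all hyperbolic $(g,n)$. Put $\gamma_d=[\tau_d\tau_0^n]_{g,n+1}/V_{g,n+1}$, which is nonincreasing in $d$ with $\gamma_0=1$. The string and dilaton identities are $V_{g,n+1}(\mathbf 0,2\pi i)=0$ and $\partial_LV_{g,n+1}(\mathbf 0,2\pi i)=2\pi i(2g-2+n)V_{g,n}$. In terms of $\gamma_d$ they read $1=\sum_{d\ge1}(-1)^{d-1}\gamma_d\frac{\pi^{2d}}{(2d+1)!}$ and $\frac{(2g-2+n)V_{g,n}}{V_{g,n+1}}=\frac{1}{4\pi^2}\sum_{d\ge1}(-1)^{d-1}\gamma_d\frac{2d\,\pi^{2d}}{(2d+1)!}$. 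Alternating-series bounds give $\gamma_1\ge6/\pi^2$, and then the ratio is at least $\frac{\gamma_1}{4\pi^2}\bigl(\frac{\pi^2}{3}-\frac{\pi^4}{30}\bigr)>0$.

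Use the genus move only at $n=1$, where Theorem~\ref{thm:expratios} gives $\sup_g V_{g,1}/V_{g-1,3}<\infty$. Then $V_{g,1}\le C_1C_2^2(2g-2)(2g-3)V_{g-1,1}$, so $V_{g,1}\le K^{g}(2g-2)!$. Stripping punctures gives $V_{g,n}\le C_2^{n-1}\frac{(2g-3+n)!}{(2g-2)!}V_{g,1}\le M^{2g+n}(2g-3+n)!$, and in genus zero $V_{0,n}\le C_2^{n-3}(n-3)!$. This is slightly stronger than the lemma.
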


Thus: 

\begin{align*}
    \left[\tau_{k_{1}} \prod_{i\in I } \tau_{d_{i}}\right]_{g_1,|I|+1}&\leq V_{g_1,|I|+1}\\
    &\leq (C(2g+|I|))^{2g_1+|I|-2}.
\end{align*}

Moreover, summing over $k_1+k_2=i+d_1-1$ yields: 

\begin{align*}
    \sum_{\genfrac{}{}{0pt}{}{k_{1}+k_{2}}{=i+d_{1}-1}}k_2^{i_0}&=\frac{(i+d_1-1)^{i_0+1}}{i_0+1}\\
    &=\frac{1}{i_0+1}\sum_{\substack{i_1+i_0'\\=i_0+1}}\frac{d_1^{i_1}(i-1)^{i_0'}}{i_1!i_0'!}.
\end{align*}

Applying the inequality of Lemma~\ref{lem:HMT1} yields us:

\begin{align*}
    |q_{\delta\mathcal{C},g_1,J}^{(s-1)}(\mathbf{d}_J)|\leq(2(s+1-2g_1-|I|))\cdot (C_ns)^{s-1}\sum_{\substack{i_1+i_{j_1}+\ldots i_{j_{|J|}}\\\leq 2(s+1-2g_1-|I|)}}\frac{d_1^{i_1}d_{j_1}^{i_{j_1}}\ldots d_{j_{|J|}}^{i_{j_{|J|}}}}{i_1!i_{j_1}!\ldots i_{j_{|J|}}!}\\
    \leq (C_ns)^{s}\sum_{\substack{i_1+i_{j_1}+\ldots i_{j_{|J|}}\\\leq 2(s+1-2g_1-|I|)}}\frac{d_1^{i_1}d_{j_1}^{i_{j_1}}\ldots d_{j_{|J|}}^{i_{j_{|J|}}}}{i_1!i_{j_1}!\ldots i_{j_{|J|}}!}.
\end{align*}

To compute the total contribution of $\delta_1{\mathcal{C}}_{g,n}(\mathbf{d})/V_{g,n}$, we sum over $2g_1\leq s-|I|-1$. Through Faulhaber's formula, we obtain the bound $(C(s+1))^{s+1}$. This bound for partial polynomials is well preserved through the induction. The proof is analogous as the one for the main polynomials. 

The sum of partial polynomials is bounded by: 
\begin{align*}
    \sum_{J\in\{\mathbf{n}\}}q_{n,J}^{(s+1)}(\mathbf{d})\leq 2^{n}(C_n(s+1))^{s+1}\sum_{\substack{i_1+\ldots\\+i_n\leq 2s+2}}\frac{d_1^{i_1}\ldots d_n^{i_n}}{i_1!\ldots i_n!}.
\end{align*}

The proof of Theorem~\ref{thm:internb} is complete.

\end{proof}
\section{Expansion of volume polynomials}\label{sec:expvol}

We are now able to refine the expansion of volumes of moduli spaces. We mostly follow the technique of Anantharaman and Monk \cite{AM22}. 

\subsection{Degree of the polynomials in the volumes expansion}
\begin{proof}[Proof of Theorem~\ref{thm:expvol}]
We deduce the asymptotic expansion of the volume polynomials from the expansion of the coefficients $\left[\tau_{d_1}\ldots\tau_{d_n}\right]$. Let us recall the following Lemma from \cite{AM22}:
\begin{lem}
  \label{lemm:sum}
  Let $k \geq 0$ be an integer and 
  \begin{equation*}
    \mathrm{p}_k(X) =  \prod_{j=0}^{k-1} (2X+1-j)
    = (2X+1) (2X) (2X-1) \ldots (2X+2-k),
  \end{equation*}. 
  This polynomials are a basis of the set of polynomials and for any $x \in \mathbb{R}$,
  \begin{equation*}
    \sum_{d=0}^{+ \infty} \frac{\mathrm{p}_{k}(d) \, x^{2 d+1}}{2^{2 d+1}(2 d+1)!}
    =
    \begin{cases}
      \frac{x^{k}}{2^{k}} \sinh (\frac{x}{2}) & \text{if } k \text{ is even} \\
      \frac{x^{k}}{2^{k}} \cosh (\frac{x}{2}) & \text{if } k \text{ is odd}.
    \end{cases}.
  \end{equation*}
\end{lem}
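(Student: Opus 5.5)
The identity is a re-indexing of the Taylor series of $\exp$. The key observation is that $\mathrm{p}_k(d)$ is a falling factorial in the odd integer $m=2d+1$. Indeed, with $m=2d+1$,
$$\mathrm{p}_k(d)=\prod_{j=0}^{k-1}(m-j)=m(m-1)\cdots(m-k+1),$$
which is the falling factorial $m^{\underline{k}}$.

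For the basis claim, I note that $\mathrm{p}_k$ is a polynomial in $X$ of degree exactly $k$ with leading coefficient $2^k\neq 0$. The family $(\mathrm{p}_k)_{k\geq 0}$ is therefore triangular with respect to the monomial basis $(X^k)_{k\geq 0}$. Hence it is a basis of $\mathbb{R}[X]$, and for each $N$ the polynomials $\mathrm{p}_0,\ldots,\mathrm{p}_N$ span the polynomials of degree at most $N$.

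For the generating-sum identity, I first record the falling-factorial formula: for an integer $m\geq 0$, $m^{\underline{k}}=m!/(m-k)!$ when $m\geq k$. When $1\leq m<k$, the product contains the factor $j=m$, which gives $m-m=0$, so $m^{\underline{k}}=0$. Since $m=2d+1\geq 1$, the only case not covered is $m=0$, which cannot occur. Thus the terms with $2d+1<k$ vanish. The remaining terms become
$$\frac{\mathrm{p}_k(d)\,x^{2d+1}}{2^{2d+1}(2d+1)!}=\frac{1}{(m-k)!}\Big(\frac{x}{2}\Big)^{m}=\Big(\frac{x}{2}\Big)^{k}\frac{(x/2)^{m-k}}{(m-k)!}.$$
Summing over odd $m\geq k$, the index $\ell=m-k$ runs over all nonnegative integers of parity opposite to $k$. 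If $k$ is even, $\ell$ ranges over the odd integers, giving $\sum_{\ell \text{ odd}}(x/2)^\ell/\ell!=\sinh(x/2)$. If $k$ is odd, $\ell$ ranges over the even integers, giving $\cosh(x/2)$. Absolute convergence for every $x\in\mathbb{R}$ follows from the factorial in the denominator, which justifies dropping the zero terms and re-indexing.

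The only point needing care is the vanishing of $\mathrm{p}_k(d)$ for $2d+1<k$. This is where one must check that the zero factor $j=2d+1$ actually occurs within the range $0\leq j\leq k-1$, which holds because $2d+1\leq k-1$ in that case. Everything else is routine bookkeeping.
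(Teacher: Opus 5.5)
Your proof is correct and complete: writing $\mathrm{p}_k(d)$ as the falling factorial $m(m-1)\cdots(m-k+1)$ with $m=2d+1$, checking that it vanishes for $m<k$, and re-indexing by $\ell=m-k$ gives exactly $(x/2)^k\sinh(x/2)$ or $(x/2)^k\cosh(x/2)$. The paper itself gives no proof and simply recalls the lemma from Anantharaman--Monk; your re-indexing is the standard direct verification, equivalent to observing that $\mathrm{p}_k(d)\,x^{2d+1}=x^k\frac{d^k}{dx^k}x^{2d+1}$, so the series equals $x^k\frac{d^k}{dx^k}\sinh\frac{x}{2}$.
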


To compute the expansion of: 
\begin{align*}
\prod_{j=1}^{n}\frac{x_j}{2}\frac{V_{g,n}(\mathbf{x})}{V_{g,n}}
&= \sum_{d_1, \ldots, d_n} \frac{\left[\tau_{d_1}\ldots\tau_{d_n}\right]}{V_{g,n}}
\prod_{j=1}^n \frac{x_j^{2d_j+1}}{2^{2d_j+1}(2d_j+1)!}\\
&= \sum_{d_1, \ldots,d_n}\left(\sum_{r=0}^{s}\frac{e_n^{(r)}(\mathbf{d})}{g^r} +\mathcal{O}_{n,s}\left(\frac{|\mathbf{d}|^{s+1}}{g^{s+1}}\right)\right)
\prod_{j=1}^n \frac{x_j^{2d_j+1}}{2^{2d_j+1}(2d_j+1)!}\\
\end{align*}

The coefficient $e_n^{(r)}$ is decomposed in polynomials $q_n^{(r)}$ and $J$-partial polynomials $q_{n,J}^{(r)}$. The polynomial $q_n^{(r)}$ can be written as sum of products of $p_k(d_i)$ with $0\leq k \leq 2r$. Then we obtain the following contribution:
   \begin{equation*}
    \sum_{d_1, \ldots,d_n}\!q_n^{(r)}\!(\mathbf{d})\!
\prod_{j=1}^n\! \frac{x_j^{2d_j+1}}{2^{2d_j+1}(2d_j+1)!}
    \!=\! \sum_{\substack{J_+ \sqcup J_-\\=\{1, \ldots,n \}}}
    P_{g,n}^{(r,J_\pm)}(\mathbf{x}) \prod_{i \in J_+} \cosh
    \frac{x_i}{2}
    \prod_{i \in J_-} \sinh \frac{x_i}{2}.
  \end{equation*}
  where $P_{g,n}^{(r,J_\pm)}$ is a polynomial of degree $2r$. 
  
For partial polynomials, we obtain: 

\begin{equation*}
    \sum_{d_1, \ldots,d_n}\!q_{n,J}^{(r)}\!(\mathbf{d})\!
\prod_{j=1}^n \frac{x_j^{2d_j+1}}{2^{2d_j+1}(2d_j+1)!}
    \!=\! \sum_{J_+ \sqcup J_=J}
    P_{g,n}^{(r,J_\pm)}(\mathbf{x}) \prod_{i \in J_+} \cosh
    \frac{x_i}{2}
    \prod_{i \in J_-} \sinh \frac{x_i}{2}.
  \end{equation*}
where $P_{g,n}^{(r,J_\pm)}$ is a polynomial in $\mathbf{x}_{J}$ of degree at most $2r-4$ and polynomial in $\mathbf{x}_{\{\mathbf{n}\}\setminus J}$ of degree strictly less than $2r-4$. 
  
For the error term: 
\begin{equation*}
    \sum_{d_1, \ldots,d_n}\mathcal{O}_{n,s}(|\mathbf{d}|^{2s+2})\prod_{j=1}^n \frac{x_j^{2d_j+1}}{2^{2d_j+1}(2d_j+1)!}=\mathcal{O}_{n,s}\left(\frac{|\mathbf{x}|^{2s+2}}{g^{s+1}}\cdot\exp\frac{|\mathbf{x}|}{2}\right).
\end{equation*}

\subsection{Bound on the polynomials}

It remains to prove that:
\begin{align*}
      |Q_n^{(r,J_\pm)}(x_1,\ldots x_n)|\leq(C_nr)^{r}(1+|\mathbf{x}|)^{2r}. 
\end{align*}

The bound we obtain on the coefficients of the expansion of intersection numbers directly yields this result. The proof can be found in Section 4.4 of \cite{HMT}. We highlight that the choice of the basis $\frac{d_1^{i_1}\ldots d_n^{i_n}}{i_1!\ldots i_n!}$ in the decomposition of intersection numbers allows to go through the change of basis of Lemma~\ref{lemm:sum} introduced in \cite{AM22} and get the analogous bound on the coefficients of the expansion of volumes. 
\end{proof}

\subsection{Exact asymptotic value at order $\mathcal{O}(\sqrt{g})$} The exact computation of the dominant terms in Theorem~\ref{thm:internb} yields us Theorem~\ref{thm:asymsqrt}.
\begin{proof}[Proof of Theorem~\ref{thm:asymsqrt}]
    
In the basis $\{p_k\}_{k}$, the main polynomials are expressed as follow:
\begin{align*}
q_n^{(r)}(\mathbf{d})&=\left(\frac{-1}{\pi^{2}}\right)^{r+1}\sum_{\substack{i_1+\ldots+i_n\\=2r+2}}\frac{(2r+1)!!}{i_1!\ldots i_n!}\cdot d_1^{i_1}\ldots d_n^{i_n}
    +\mathcal{O}_{n,r}\left(|\mathbf{d}|^{2r+1}\right)\\
    &=\left(\frac{-1}{\pi^{2}}\right)^{r}\sum_{\substack{i_1+\ldots+i_n\\=2r}}\frac{(2r-1)!!}{i_1!\ldots i_n!}\frac{1}{2^{2r}}\cdot p_{i_1}(d_1)\ldots p_{i_n}(d_n)+\mathcal{O}_{n,r}\left(|\mathbf{d}|^{2r-1}\right)
\end{align*}

From Lemma~\ref{lemm:sum}:

\begin{align*}
(\star):&=\sum_{d_1, \ldots,d_n}q_n^{(r)}(\mathbf{d})
\prod_{j=1}^n \frac{x_j^{2d_j+1}}{2^{2d_j+1}(2d_j+1)!}\\
&=\left(\left(\frac{-1}{\pi^{2}}\right)^{r}\sum_{\substack{i_1+\ldots+i_n\\=2r}}\frac{(2r-1)!!}{i_1!\ldots i_n!}\frac{1}{2^{4r}}x_1^{i_1}\ldots x_n^{i_n}\cdot\prod_{i_j \text{ even}}\sinh\frac{x_j}{2}\prod_{i_j \text{ odd}}\cosh\frac{x_j}{2}\right)\\
&\hspace{8.5cm}\cdot\left(1+\mathcal{O}_{n}\left(\frac{1}{\min_{j}x_j}\right)\right)
\end{align*}

As $x_i\rightarrow \infty$ and $x_i=\mathcal{O}(\sqrt{g})$:
$$\prod_{i_j \text{ even}}\sinh\frac{x_j}{2}\prod_{i_j \text{ odd}}\cosh\frac{x_j}{2}=\frac{1}{2^{n}}\exp\frac{\mathbf{|x|}}{2}\left(1+\mathcal{O}_{n}\left(\frac{1}{\exp\min\limits_{1\leq i \leq n} x_i}\right)\right).$$
    
Then: 

\begin{align*}
(\star)&=\frac{1}{2^{n}}\left(\left(\frac{-1}{\pi^{2}}\right)^{r}\sum_{\substack{i_1+\ldots+i_n\\=2r}}\frac{(2r-1)!!}{i_1!\ldots i_n!}\frac{1}{2^{4r}}x_1^{i_1}\ldots x_n^{i_n}\cdot\exp\frac{|\mathbf{x}|}{2}\right)\left(1+\mathcal{O}_{n}\left(\frac{1}{\min_{j}x_j}\right)\right)\\
&=\frac{1}{2^{n}}\frac{1}{r!}\left(\frac{-1}{16\pi^{2}}\right)^{r}\frac{1}{2^{r}}\left(\sum_{\substack{i_1+\ldots+\\i_n=2r}}\frac{(2r)!}{i_1!\ldots i_n!}x_1^{i_1}\ldots x_n^{i_n}\right)\exp\frac{\mathbf{|x|}}{2}\left(1+\mathcal{O}_{n}\left(\frac{1}{\min_{j}x_j}\right)\right)\\
&=\frac{1}{2^{n}}\frac{1}{r!}\left(\frac{-1}{32\pi^{2}}\right)^{r}|\mathbf{x}|^{2r}\exp\frac{\mathbf{x}}{2}\left(1+\mathcal{O}_{n}\left(\frac{1}{\min_{j}x_j}\right)\right).
\end{align*}

Dominant terms are summable over $r$: 
\begin{align*}
    \left(\sum_{r=0}^{\infty}\frac{1}{r!}\left(\frac{-1}{32\pi^{2}}\right)^{r}|\mathbf{x}|^{2r}\right)\exp\frac{|\mathbf{x}|}{2}=\exp\left(\frac{|\mathbf{x}|}{2}-\frac{|\mathbf{x}|^{2}}{32\pi^{2}g}\right).
\end{align*}

The proof of Theorem~\ref{thm:asymsqrt} when $x_j(g)\rightarrow\infty$ for all $j\in \{\mathbf{n}\}$ is complete. 

Let us denote $J$ the subset of $\{\mathbf{n}\}$ such that for $j\in J $, $x_j(g)\rightarrow\infty$. Then: 

\begin{align*}
(\star)&=\frac{1}{2^{|J|}}\left(\frac{-1}{\pi^{2}}\right)^{r}\sum_{\substack{i_{j_1}+\ldots+i_{j_{|J|}}\\=2r}}\frac{(2r-1)!!}{i_{j_1}!\ldots i_{j_{|J|}}!}\frac{1}{2^{4r}}x_1^{i_{j_1}}\ldots x_n^{i_{j_{|J|}}}\cdot\exp\frac{|\mathbf{x}_J|}{2}\\
&\hspace{6cm}\cdot\prod_{j\notin J}\sinh\frac{x_j}{2}\cdot\left(1+\mathcal{O}_{n,|J|}\left(\frac{1}{\min_{j\in J}x_j}\right)\right)\\
\end{align*}

As previously, one can sum over $r$: 
\begin{align*}
    \sum_{r=0}^{\infty}\left(\frac{-1}{\pi^{2}}\right)^{r}\sum_{\substack{i_{j_1}+\ldots+\\i_{j_{|J|}}=2r}}\frac{(2r-1)!!}{i_{j_1}!\ldots i_{j_{|J|}}!}\frac{1}{2^{4r}}x_1^{i_{j_1}}\ldots x_n^{i_{j_{|J|}}}=\exp\frac{-|\mathbf{x}_J|^{2}}{32\pi^{2}g}. 
\end{align*}

And the expansion of the volume is: 

\begin{align*}
    \prod_{j=1}^{n}\frac{x_j}{2}\cdot\frac{V_{g,n}(x_1,\ldots x_n)}{V_{g,n}}\!=\!\frac{1}{2^n}\exp\left({\frac{|\mathbf{x}|}{2}\!-\!\frac{1}{8\pi^{2}g}\left(\frac{|\mathbf{x}|}{2}\right)^{2}}\right)\!\left(1\!+\!\mathcal{O}_{n}\left(\frac{1}{\min x_j}\right)\right).
\end{align*}

\end{proof}

\section{Counting geodesics of length at most $L=O(\sqrt{g})$}\label{sec:countres}

The refinement of the expansion of volume polynomials allows us to understand the behavior of the counting function of simple closed geodesics at the threshold $\sqrt{g}$. In fact, one can deduce expectations of geometrical functions from the asymptotic expansion of volumes through Mirzakhani's integration formula. We recall the statement of Theorem~\ref{thm:count}:

\begin{thm*}
For $L=\mathcal{O}(\sqrt{g})$, $L\rightarrow \infty$
\begin{align}
    \mathbb{E}_{WP}(N^{s}_{nsep}(X,L))\sim_g\frac{e^{L-\frac{L^{2}}{8\pi^{2}g}}}{2L}.
\end{align}
\end{thm*}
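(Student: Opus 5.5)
The plan is to combine Mirzakhani's integration formula (Theorem~\ref{thm:integr}) with the asymptotics of Theorem~\ref{thm:asymsqrt} for $n=2$ in genus $g-1$. All simple non-separating geodesics on a closed surface lie in a single mapping class group orbit. Taking $F=\mathbf{1}_{[0,L]}$, we get
\begin{equation*}
\mathbb{E}_{WP}(N^{s}_{nsep}(X,L))=\frac{1}{2V_g}\int_0^L V_{g-1,2}(x,x)\,x\,\mathrm{d}x .
\end{equation*}
We rewrite the integrand as $\frac{V_{g-1,2}}{V_g}\cdot\frac{4}{x}\cdot\left(\frac{x}{2}\right)^2\frac{V_{g-1,2}(x,x)}{V_{g-1,2}}$. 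Corollary~\ref{cor:ratio} (equal Euler characteristic, via \eqref{equ:g-1/g}) gives $V_{g-1,2}/V_g=1+\mathcal{O}(1/g)$.

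Next I split the integral at a slowly growing cutoff $A=A(g)\to\infty$, say $A=\log L$. For $x\geq A$ with $x\leq L=\mathcal{O}(\sqrt{g})$, Theorem~\ref{thm:asymsqrt} with $n=2$, $|\mathbf{x}|=2x$ and genus $g-1$ gives
\begin{equation*}
\left(\frac{x}{2}\right)^2\frac{V_{g-1,2}(x,x)}{V_{g-1,2}}=\frac14\exp\left(x-\frac{x^2}{8\pi^2 (g-1)}\right)\left(1+\mathcal{O}\left(\frac1x\right)\right).
\end{equation*}
Replacing $g-1$ by $g$ costs a factor $1+\mathcal{O}(x^2/g^2)=1+o(1)$. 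The integrand is therefore $\frac{1}{x}e^{\phi(x)}(1+o(1))$ with $\phi(x)=x-\frac{x^2}{8\pi^2g}$, and the error is uniform once $x\geq A$. For $x\leq A$, the crude bound $V_{g-1,2}(x,x)\leq e^{x}V_{g-1,2}$ (e.g.\ from Theorem~\ref{thm:expvol} with $s=0$) bounds that piece by $\mathcal{O}(e^{A})$, which is $o(e^{\phi(L)}/L)$ because $\phi(L)\geq L(1-C^2/(8\pi^2))$ grows linearly.

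It remains to show $\int_A^L \frac{e^{\phi(x)}}{x}\,\mathrm{d}x\sim \frac{e^{\phi(L)}}{L}$, and this Laplace-type endpoint estimate is the main obstacle. The key input is that $\phi'(x)=1-\frac{x}{4\pi^2 g}$ stays bounded below on $[0,L]$ as long as $L\leq 4\pi^2(1-\varepsilon)\sqrt{g}\cdot\sqrt g$, which certainly holds for $L=\mathcal{O}(\sqrt g)$; indeed $\phi'(x)=1-\mathcal{O}(1/\sqrt g)\to1$ uniformly. I will integrate by parts, writing $\frac{e^{\phi}}{x}=\frac{(e^{\phi})'}{x\phi'}$:
\begin{equation*}
\int_A^L \frac{e^{\phi}}{x}=\left[\frac{e^{\phi}}{x\phi'}\right]_A^L+\int_A^L e^{\phi}\,\frac{(x\phi')'}{(x\phi')^2}\,\mathrm{d}x .
\end{equation*}
The boundary term at $L$ is $\frac{e^{\phi(L)}}{L}(1+o(1))$, and the term at $A$ is negligible. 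The remaining integral has integrand $\mathcal{O}(e^{\phi}/x^2)$. Splitting it at $L/2$ bounds it by $\mathcal{O}(e^{\phi(L/2)})+\mathcal{O}(e^{\phi(L)}/L^2)$, and both are $o(e^{\phi(L)}/L)$ since $\phi(L)-\phi(L/2)\geq cL$.

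The $\mathcal{O}(1/x)$ relative error from Theorem~\ref{thm:asymsqrt} contributes $\int_A^L e^{\phi}/x^2$, which is negligible by the same splitting. Finally, the factor $\frac{1}{2}\cdot\frac{4}{x}\cdot\frac14=\frac{1}{2x}$ produces the constant $\frac{1}{2L}$, giving $\mathbb{E}_{WP}(N^s_{nsep}(X,L))\sim e^{L-L^2/(8\pi^2g)}/(2L)$.
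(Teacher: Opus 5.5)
Your proposal is correct and follows the paper's route: Mirzakhani's integration formula with $F=\mathbf{1}_{[0,L]}$, then $V_{g-1,2}/V_g=1+\mathcal{O}(1/g)$, then Theorem~\ref{thm:asymsqrt} for $n=2$, $\mathbf{x}=(x,x)$ in genus $g-1$, giving the integrand $\frac{1}{2x}e^{x-x^2/(8\pi^2 g)}(1+o(1))$. You are in fact more careful than the paper's sketch, since your cutoff at $A\to\infty$ and your integration-by-parts endpoint estimate supply steps it skips (the cutoff is needed because Theorem~\ref{thm:asymsqrt} requires $x\to\infty$, and the factor $4/x^2$ is not integrable against $x\,\mathrm{d}x$ near $0$). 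The remaining slips are cosmetic: $\phi(L)\geq L-C^2/(8\pi^2)$, since $L^2/g=\mathcal{O}(1)$, rather than $L(1-C^2/(8\pi^2))$; and the crude bound on $[0,A]$ should come from $[\tau_{\mathbf{d}}]_{g,n}\leq V_{g,n}$, which gives $V_{g-1,2}(x,x)\leq \frac{4\sinh^2(x/2)}{x^2}V_{g-1,2}$, not from Theorem~\ref{thm:expvol}, whose error term does not vanish at $x=0$.
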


\begin{proof}
The proof relies on Mirzakhani's integration of geometric functions method. It is close to the proof of the first statement of Theorem~\ref{thm:WX}, see the proof of Lemma 48 in \cite{WX}. We inject our refined expansion of the volumes in the integration formula of Theorem~\ref{thm:integr}.

\begin{align*}
    \mathbb{E}_{WP}(N^{s}_{nsep}(X,L))&=\frac{1}{2}\int_{0}^L x\cdot \frac{V_{g-1,2}(x,x)}{V_{g}}\mathrm{d}x\\
    &=\frac{1}{2}\int_0^Lx\cdot \frac{4}{x^{2}}\cdot \exp\left(\frac{|\mathbf{x}|}{2}-\frac{|\mathbf{x}|^{2}}{32\pi^{2}g}\right)\left(1+o\left(\frac{x^{2}}{g}\right)\right)\\
    &\hspace{6.8cm}\cdot\left(1+\mathcal{O}_{n}\left(\frac{1}{g}\right)\right)\mathrm{d}x\\
    &=\frac{e^{L-\frac{L^{2}}{8\pi^{2}g}}}{2L}\left(1+o\left(\frac{L^{2}}{g}\right)\right).
\end{align*}
The proof of Theorem~\ref{thm:count} is complete. 
\end{proof}

\bibliographystyle{alpha}
\bibliography{bib}

\end{document}